\newtheorem{remark}[theorem]{Remark}
\newcommand{\R}{{\mathbb R}}
\newcommand{\C}{{\mathbb C}}
\newcommand{\be}{\begin{eqnarray}}
\newcommand{\ben}{\begin{eqnarray*}}
\newcommand{\en}{\end{eqnarray}}
\newcommand{\enn}{\end{eqnarray*}}
\newcommand{\ba}{\backslash}
\newcommand{\pa}{\partial}
\newcommand{\ov}{\overline}
\newcommand{\ggrad}{{\rm grad\,}}        
\newcommand{\ddiv}{{\rm div\,}}
\newcommand{\g}{\gamma}
\newcommand{\Om}{\Omega}
\newcommand{\om}{\omega}
\newcommand{\al}{\alpha}
\newcommand{\la}{\lambda}
\newcommand{\hx}{\hat{x}}
\title{Direct sampling methods for inverse elastic scattering problems}
\author{Xia Ji\thanks{LSEC, Academy of Mathematics and Systems Science, Chinese Academy of Sciences, Beijing 100190, China ({\tt jixia@lsec.cc.ac.cn}).}
\and
Xiaodong Liu\thanks{Institute of Applied Mathematics, Academy of Mathematics and Systems Science, Chinese Academy of Sciences, 100190 Beijing, China({\tt xdliu@amt.ac.cn}).}
\and
Yingxia Xi\thanks{LSEC, Academy of Mathematics and Systems Science, Chinese Academy of Sciences, Beijing 100190, China ({\tt yxiaxi@lsec.cc.ac.cn}).}
}
\begin{document}

\maketitle

\begin{abstract}
We consider the inverse elastic scattering of incident plane compressional and shear waves from the knowledge of the far field patterns.
Specifically, three direct sampling methods for location and shape reconstruction are proposed using the different component of the far field patterns.
Only inner products are involved in the computation, thus the novel sampling methods are very simple and fast to be implemented.
With the help of the factorization of the far field operator, we give a lower bound of the proposed indicator functionals for sampling points inside the scatterers.
While for the sampling points outside the scatterers, we show that the indicator functionals decay like the Bessel functions as the sampling
point goes away from the boundary of the scatterers.
We also show that the proposed indicator functionals continuously dependent on the far field patterns, which further implies that the novel sampling
methods are extremely stable with respect to data error.
For the case when the observation directions are restricted into the limited aperture, we firstly introduce some data retrieval techniques to obtain those data that can not be measured
directly and then use the proposed direct sampling methods for location and shape reconstructions.
Finally, some numerical simulations in two dimensions are conducted with noisy data, and the results further verify the effectiveness and robustness of
the proposed sampling methods, even for multiple multiscale cases and limited-aperture problems.
\end{abstract}

\begin{keywords}
Elastic scattering; sampling method; far field pattern; limited-aperture problem
\end{keywords}

\begin{AMS} 	
35P25, 45Q05, 78A46, 74B05
\end{AMS}

\pagestyle{myheadings}
\thispagestyle{plain}
\markboth{X. Ji, X. Liu and Y. Xi}
{Direct sampling methods for inverse elastic scattering problems}
\section{Introduction}

Scattering of elastic waves plays an important role in such different areas as seismic imaging, nondestructive testing, geophysical exploration,  and medical diagnosis.
In the last twenty years, non-iterative sampling methods for location and shape reconstruction in inverse elastic scattering problems have attracted a lot of interest.
Typical examples include the Linear Sampling Method by Arens \cite{Arens2001}, the Factorization Method by Alves and Kress \cite{AlvesKress2002}, Arens \cite{Arens2001},
Charalambopoulos \& Kirsch et al. \cite{TYG}, and recently by Hu, Kirsch \& Sini \cite{HuKirschSini},
and the method based on topological derivative by Guizina \& Chikivhev \cite{GuzinaChikichev}.
The basic idea is to design an indicator which is big inside the underlying scatterer and relatively small outside.
Recently, in \cite{ChenHuang}, Chen \& Huang introduced the Reverse Time Migration where the indicator functional is defined as the imaginary part of the cross-correlation
of the weighted elastic Green function and the weighted back-propagated elastic wave field. Different to the previous sampling methods, the indicator functional decays as the
sampling points go away from the boundary.
We also refer to Li, Wang et al. \cite{LiWangWangZhuo} and Bao \& Hu et al. \cite{BaoHuSunYin} for iterative methods by using multiple frequency data.
For the readers interested in a more comprehensive treatment of the inverse elastic scattering problems, we suggest consulting Bonnet and Constantinescu \cite{BonnetConstantinescu}  on
this subject.

The purpose of this paper is to introduce some novel indicator functionals for inverse elastic scattering problems with the far field patterns. These indicator functionals
have a lower bound for sampling points inside the scatterers and decay like the Bessel functions as the sampling point goes away from the boundary of the scatterers.
Besides, some data retrieval techniques will be introduced for limited aperture problems.
We will focus on the case of two-dimensional scattering and present both the theoretical and numerical results for this case,
illustrating the fast, effective and robust reconstructions, even for the limited aperture problems.

We begin with the notations used throughout this paper. All vectors will be denoted in bold script.
For a vector $\mathbf{x}:=(x_1, x_2)^{T}\in\R^2$, we introduce the two unit vectors $\hat{\mathbf{x}}:=\mathbf{x}/|\mathbf{x}|$ and $\hat{\mathbf{x}}^{\perp}$ obtained by
rotating $\hat{\mathbf{x}}$ anticlockwise by $\pi/2$.
For simplicity, we write $\pa_i$ for the usual partial derivative $\frac{\pa}{\pa x_i},\,i=1,2$.
Then, in addition to the usual differential operators $\ggrad:=(\pa_1, \pa_2)^T$ and $\ddiv:=(\pa_1, \pa_2)$,
we define two auxiliary differential operators $\ggrad^{\perp}$ and $\ddiv^{\perp}$ by
$\ggrad^{\perp}:= (-\pa_2, \pa_1)^T$ and $\ddiv^{\perp}:= (-\pa_2, \pa_1)$, respectively. It is easy to deduce the differential indentities $\ddiv^{\perp}\ggrad = \ddiv\ggrad^{\perp}=0$.

Let $\Om\subset\R^2$ be an open and bounded Lipschitz domain such that the exterior $\R^2\ba\ov{\Om}$ of $\Om$ is connected.
Here and throughout the paper we denote $\ov{\Om}$ the closure of the set $\Om$. A confusion with the complex conjugate $\ov{z}$ of $z\in\C$ is not expected.
The propagation of time-harmonic elastic wave equation in an isotropic homogeneous media outside $\Om$ is governed by the reduced Navier equation
\be\label{elastic}
\Delta^{\ast} \mathbf{u}+\omega^2 \mathbf{u}=0\quad \mbox{in}\,\,\R^2\ba\ov{\Om}, \quad \Delta^{\ast}:= \mu\Delta +(\lambda+\mu) \ggrad\ddiv,
\en
where $\mathbf{u}$ denotes the total displacement field and $\omega$ is the circular frequency, $\lambda$ and $\mu$ are Lam$\acute{\mbox{e}}$ constants satisfying
$\mu>0, \,2\mu+\lambda>0$.
It is well known that any solution $\mathbf{u}$ of the Navier equation \eqref{elastic} has a decomposition in the form
\ben
\mathbf{u}=\mathbf{u}_p+\mathbf{u}_s,
\enn
where
\ben
\mathbf{u}_p:=-\frac{1}{k^2_p}\ggrad\ddiv\mathbf{u} \quad\mbox{and}\quad \mathbf{u}_s:=-\frac{1}{k^2_s}\ggrad^{\perp}\ddiv^{\perp}\mathbf{u},
\enn
are known as the compressional (longitudinal) and shear (transversal) parts of $\mathbf{u}$, respectively.
Here, $k_p:=\om/\sqrt{\la+2\mu}$ and $k_s:=\om/\sqrt{\mu}$ denote the compressional wave number and the shear wave number, respectively.
It is clear that
\ben
\Delta \mathbf{u}_p +k_p^2\mathbf{u}_p=0,\quad \ddiv^{\perp}\mathbf{u}_p=0,\quad \Delta \mathbf{u}_s +k_s^2\mathbf{u}_s=0,\quad \ddiv\mathbf{u}_s=0 \quad \mbox{in}\,\,\R^2\ba\ov{\Om}.
\enn

For a rigid body $\Om$, the total displacement field $\mathbf{u}$ satisfies the first (Dirichlet) boundary condition
\be\label{Dirichlet}
\mathbf{u}=0\quad\mbox{on}\,\pa \Om.
\en
For a cavity, we impose the second (Neumann) boundary condition
\be\label{Neumann}
\mathbb{T}_{\nu}\mathbf{u}=0\quad\mbox{on}\,\pa \Om,
\en
where $\mathbb{T}_{\nu}$ denotes the surface traction operator defined by
\ben
\mathbb{T}_{\nu}:=2\mu\nu\cdot\ggrad+\la\nu\ddiv-\mu\nu^{\perp}\ddiv^{\perp}
\enn
in terms of the exterior unit normal vector $\nu$ on $\pa \Om$.

Let $\mathbb{S}:=\{\mathbf{x}\in \R^2:|\mathbf{x}|=1\}$ denote the unit circle in $\R^2$.
In elastic scattering, an important case is the scattering of a plane wave with incident direction $\mathbf{d}\in \mathbb{S}$ which takes the form
\ben
\mathbf{u}^{in}=a_{p}\mathbf{u}_{p}^{in}+a_{s}\mathbf{u}_{s}^{in},\quad a_p,\,a_s\in \C,
\enn
where $\mathbf{u}_{p}^{in}:=\mathbf{d}e^{ik_{p}\mathbf{x}\cdot \mathbf{d}}$ is a plane compressional wave and $\mathbf{u}_{s}^{in}:=\mathbf{d}^{\perp}e^{ik_{s}\mathbf{x}\cdot \mathbf{d}}$ is a plane shear wave, respectively.
The scatterer $\Om$ gives rise to a scattered field $\mathbf{u}^{sc}=\mathbf{u}-\mathbf{u}^{in}$. The scattered field $\mathbf{u}^{sc}$ is a solution to \eqref{elastic}
and has a decomposition $\mathbf{u}^{sc}=\mathbf{u}_{p}^{sc}+\mathbf{u}_{s}^{sc}$ with
$\mathbf{u}_p^{sc}:=-\frac{1}{k^2_p}\ggrad\ddiv\mathbf{u}^{sc}$ and $\mathbf{u}_s^{sc}:=-\frac{1}{k^2_s}\ggrad^{\perp}\ddiv^{\perp}\mathbf{u}^{sc}$.
The scattered field satisfies the Kupradze's radiation conditions
\be\label{KupradzeRC}
\frac{\partial \mathbf{u}_p^{sc}}{\partial r}-ik_p\mathbf{u}_p^{sc}=o(r^{-\frac{1}{2}}),\qquad
\frac{\partial \mathbf{u}_s^{sc}}{\partial r}-ik_s\mathbf{u}_s^{sc}=o(r^{-\frac{1}{2}}),
\en
uniformly in all directions $\hat{\mathbf{x}}\in \mathbb{S}$ as $r:=|\mathbf{x}|\rightarrow\infty$.
For the unique solvability of the scattering problems \eqref{elastic}-\eqref{KupradzeRC} in the space $[H^{1}_{loc}(\R^2\ba\ov{\Om})]^{2}$ we refer to
Kupradze \cite{Kupradze1965} and Li, Wang et al. \cite{LiWangWangZhuo}.

It is well known that every radiating solution to the Navier equation has an asymptotic behaviour of the form
\be\label{usasymptotic}
\mathbf{u}^{sc}(\mathbf{x})
&=&\frac{k_p}{\om}\frac{e^{i\pi/4}}{\sqrt{8\pi\om}}\frac{e^{ik_p|\mathbf{x}|}}{\sqrt{|{\mathbf{x}}|}}u^\infty_p(\hat{\mathbf{x}})\hat{\mathbf{x}}\cr
&&+\frac{k_s}{\om}\frac{e^{i\pi/4}}{\sqrt{8\pi\om}}\frac{e^{ik_s|\mathbf{x}|}}{\sqrt{|{\mathbf{x}}|}}u^\infty_s(\hat{\mathbf{x}}){\mathbf{x}}^{\perp}+O(|\mathbf{x}|^{-3/2}), \quad \mathbf{x}\to \infty
\en
uniformly in all direction $\hat{\mathbf{x}}\in \mathbb{S}$. The functions $u^\infty_p$ and $u^\infty_s$ are known as the compressional and shear far field pattern of $\mathbf{u}^{sc}$,
and are analytic functions on $\mathbb{S}$.
We want to remark here that, to simplify the subsequent representations, the coefficients in \eqref{usasymptotic} are slightly different to those given in \cite{Arens2001, Sevroglou}.
We also have the asymptotic behaviour \cite{Arens2001}
\be\label{Tusasymptotic}
\mathbb{T}_{\mathbf{\hat{x}}}\mathbf{u}^{sc}(\mathbf{x})
&=&i\om\frac{e^{i\pi/4}}{\sqrt{8\pi\om}}\frac{e^{ik_p|\mathbf{x}|}}{\sqrt{|{\mathbf{x}}|}}u^\infty_p(\hat{\mathbf{x}})\hat{\mathbf{x}}\cr
&&+i\om\frac{e^{i\pi/4}}{\sqrt{8\pi\om}}\frac{e^{ik_s|\mathbf{x}|}}{\sqrt{|{\mathbf{x}}|}}u^\infty_s(\hat{\mathbf{x}}){\mathbf{x}}^{\perp}+O(|\mathbf{x}|^{-1}), \quad \mathbf{x}\to \infty,
\en
where $\mathbb{T}_{\mathbf{\hat{x}}}$ denotes the surface traction operator on a circle centered on the origin of radius $|\mathbf{x}|$.

Throughout this paper, we will denote the pair of far field patterns
\ben
[u^\infty_p(\hat{\mathbf{x}}, \mathbf{d}, a_p, a_s);\, u^\infty_s(\hat{\mathbf{x}}, \mathbf{d}, a_p, a_s)]
\enn
of the corresponding scattered field by $\mathbf{u}^\infty(\hat{\mathbf{x}}, \mathbf{d}, a_p, a_s)$,
indicating the dependence on the observation direction $\hat{\mathbf{x}}\in \mathbb{S}$,
the incident direction $\mathbf{d}\in \mathbb{S}$ and the pair of coefficients $(a_p, a_s)\in \C^2$.
We want to remark that the dependence on the coefficients $(a_p, a_s)$ is linear.
For convenience, we use the following notations
\ben
u^\infty_{pp}(\hat{\mathbf{x}}, \mathbf{d}):=u^\infty_{p}(\hat{\mathbf{x}}, \mathbf{d}, 1, 0),\quad u^\infty_{sp}(\hat{\mathbf{x}}, \mathbf{d}):=u^\infty_{p}(\hat{\mathbf{x}}, \mathbf{d}, 0, 1),\\
u^\infty_{ps}(\hat{\mathbf{x}}, \mathbf{d}):=u^\infty_{s}(\hat{\mathbf{x}}, \mathbf{d}, 1, 0),\quad u^\infty_{ss}(\hat{\mathbf{x}}, \mathbf{d}):=u^\infty_{s}(\hat{\mathbf{x}}, \mathbf{d}, 0, 1).\\
\enn
Let $\mathbb{S}_0\subset\mathbb{S}$ be a subset of $\mathbb{S}$ with nonempty interior. Then, we are interested in the following three inverse elastic scattering problems.
\begin{itemize}
  \item {\bf IP-FF}: Determine the location and shape of the scatterer $\Om$ from the knowledge of the full far field pattern $\mathbf{u}^\infty(\hat{\mathbf{x}}, \mathbf{d}, a_p, a_s)$ for all
        observation directions $\hat{\mathbf{x}}\in \mathbb{S}_0\subset \mathbb{S}$, all incident directions $\mathbf{d}\in \mathbb{S}$ and for $(a_p, a_s)=(1,0)$ and $(a_p, a_s)=(0,1)$.
  \item {\bf IP-PP}: Determine the location and shape of the scatterer $\Om$ from the knowledge of the compressional far field pattern $u^\infty_{pp}(\hat{\mathbf{x}}, \mathbf{d})$ for all
        observation directions $\hat{\mathbf{x}}\in \mathbb{S}_0\subset \mathbb{S}$, and all incident directions $\mathbf{d}\in \mathbb{S}$.
  \item {\bf IP-SS}: Determine the location and shape of the scatterer $\Om$ from the knowledge of the shear far field pattern $u^\infty_{ss}(\hat{\mathbf{x}}, \mathbf{d})$ for all
        observation directions $\hat{\mathbf{x}}\in \mathbb{S}_0\subset \mathbb{S}$, and all incident directions $\mathbf{d}\in \mathbb{S}$.
\end{itemize}


Clearly, the measurement data in {\bf IP-PP} and {\bf IP-SS} is much less than those in {\bf IP-FF}.
For uniqueness of {\bf IP-FF}, we refer to H\"{a}hner and Hsiao \cite{HahnerHsiao}, while the corresponding result of {\bf IP-PP} and {\bf IP-SS}
can be found in the recent works by Gintides \& Sini \cite{GintidesSini} and Hu, Kirsch \& Sini \cite{HuKirschSini}.
Due to analyticity, for uniqueness it suffices to know the far field pattern on the subset $\mathbb{S}_0\subset\mathbb{S}$.
However, as one would expect, the quality of the reconstructions decreases drastically for this so called limited-aperture problem. In particular, the traditional
sampling type methods studied in Alves and Kress \cite{AlvesKress2002}, Arens \cite{Arens2001}, Charalambopoulos \& Kirsch et al. \cite{TYG},
Hu, Kirsch \& Sini \cite{HuKirschSini}, and Sevroglou \cite{Sevroglou} fail to work.
In this paper, we will introduce some data retrieval techniques to compute those data that can not be measured directly. Combining this and using the proposed sampling methods,
the limited-aperture problems are desired to be solved partially.

This paper is organized as follows.
In the next section, the theoretical analysis of the proposed reconstruction scheme will be established. A lower bound of
the indicators for the sampling points inside the scatterers is obtained with the help of an inf-criterion characterization.
The decay behavior of the indicators will then be studied for sampling points away from the boundary of the scatterers.
A stability statement will also be established to reflect the important feature of the reconstruction scheme.
For the limited-aperture problems, a data retrieval scheme will be introduced to numerically obtain the far field patterns that can not be measured directly.
One then may combine the data retrieval technique and the previous sampling methods for inverse elastic scattering problems.
Some numerical simulations in two dimensions will be presented in the last section to indicate the efficiency and robustness of the proposed methods.

\section{Novel direct sampling methods and their mathematical basis}


For any $\mathbf{z}\in\R^2$ and any polarization $\mathbf{q}\in\mathbb{S}$ define the two functions
$\phi^{p}_\mathbf{z}\in L^2(\mathbb{S})$ and $\phi^{s}_\mathbf{z}\in L^2(\mathbb{S})$ by
\be\label{phizps}
\phi^{p}_\mathbf{z}(\bbtheta):=e^{-ik_{p}\mathbf{z}\cdot\bbtheta}(\mathbf{q}\cdot\bbtheta)\quad\mbox{and} \quad \phi^{s}_\mathbf{z}(\bbtheta):=e^{-ik_{s}\mathbf{z}\cdot\bbtheta}(\mathbf{q}\cdot\bbtheta^{\perp}),\quad \bbtheta\in\mathbb{S},
\en
respectively.
Then for {\bf IP-FF}, {\bf IP-PP} and {\bf IP-SS} we introduce the indicator functionals
\be
\label{Indicatorff}
&&I_{\bf\rm FF}(\mathbf{z}):=\left|\int_{\mathbb{S}}(\phi^{p}_\mathbf{z}(\mathbf{d}), \phi^{s}_\mathbf{z}(\mathbf{d}))\int_{\mathbb{S}}
\left(
    \begin{array}{cccc}
       u_{pp}^\infty(\mathbf{\hx,d}) \quad  u_{ps}^\infty(\mathbf{\hx,d}) \\
       u_{sp}^\infty(\mathbf{\hx,d}) \quad  u_{ss}^\infty(\mathbf{\hx,d}) \\
      \end{array}
  \right)
\left(
           \begin{array}{c}
             \overline{\phi^{p}_\mathbf{z}(\mathbf{\hx})} \\
             \overline{\phi^{s}_\mathbf{z}(\mathbf{\hx})} \\
           \end{array}
         \right)
ds(\mathbf{\hx})ds(\mathbf{d})\right|,\qquad\\
\label{Indicatorpp}
&&I_{\bf\rm PP}(\mathbf{z}):=\left|\int_{\mathbb{S}}\phi^{p}_\mathbf{z}(\mathbf{d})\int_{\mathbb{S}}u_{pp}^\infty(\mathbf{\hx,d})\ov{\phi^{p}_\mathbf{z}(\mathbf{\hx})}ds(\mathbf{\hx})ds(\mathbf{d})\right|,\\
\label{Indicatorss}
&&I_{\bf\rm SS}(\mathbf{z}):=\left|\int_{\mathbb{S}}\phi^{s}_\mathbf{z}(\mathbf{d})\int_{\mathbb{S}}u_{ss}^\infty(\mathbf{\hx,d})\ov{\phi^{s}_\mathbf{z}(\mathbf{\hx})}ds(\mathbf{\hx})ds(\mathbf{d})\right|,\quad \mathbf{z}\in\R^2,
\en
respectively.

\subsection{Lower bound estimate of $I_{\bf\rm FF}(\mathbf{z})$, $I_{\bf\rm PP}(\mathbf{z})$ and $I_{\bf\rm SS}(\mathbf{z})$ for $\mathbf{z}\in\Om$}

We will use the short-hand $\mathcal{L}^2:=[L^2(\mathbb{S})]^2$.
For any $\mathbf{g}\in \mathcal{L}^2$, we adopt the notation $\mathbf{g}=(g_p; g_s)$ with
\ben
g_p(\mathbf{d}):=\mathbf{g(d)\cdot d}\quad\mbox{and}\quad g_s(\mathbf{d}):=\mathbf{g(d)\cdot d^{\perp}},\quad  \mathbf{d}\in\mathbb{S}.
\enn
The Hilbert space $\mathcal{L}^2$ will be equipped with the inner product
\be\label{innerproduct}
(\mathbf{g},\mathbf{h})_{\mathbb{S}}
:= \int_{\mathbb{S}}g_p(\mathbf{d})\ov{h_p(\mathbf{d})}ds(\mathbf{d})+ \int_{\mathbb{S}}g_s(\mathbf{d})\ov{h_s(\mathbf{d})}ds(\mathbf{d}),
\quad \mathbf{g},\mathbf{h}\in \mathcal{L}^2.
\en
It is clear that there holds the decomposition
\ben
\mathbf{g(d)=g_p(d)+g_s(d)}, \quad \mathbf{g_p(d)}:=g_p(\mathbf{d})\mathbf{d},\quad \mathbf{g_s(d)}:=g_s(\mathbf{d})\mathbf{d}^{\perp},
\enn
where $\mathbf{g_p(d)}\in \mathcal{L}^2_p:=\{\mathbf{g_p}\in\mathcal{L}^2: \mathbf{g_p(d)\cdot d^{\perp}}=0,\,\mathbf{d}\in \mathbb{S}\}$
and $\mathbf{g_s(d)}\in \mathcal{L}^2_s:=\{\mathbf{g_s}\in\mathcal{L}^2: \mathbf{g_s(d)\cdot d}=0,\, \mathbf{d}\in \mathbb{S}\}$.
For later use, we introduce the orthogonal projection operators $\mathbf{\mathcal{P}_p}: \mathcal{L}^2\to \mathcal{L}^2_p$ and $\mathbf{\mathcal{P}_s}: \mathcal{L}^2\to \mathcal{L}^2_s$,
i.e., for all $\mathbf{g(d)}\in \mathcal{L}^2$, $\mathbf{\mathcal{P}_p} (\mathbf{g(d)})=\mathbf{g_p(d)}$ and $\mathbf{\mathcal{P}_s}(\mathbf{g(d)})=\mathbf{g_s(d)}$.
Their adjoint operators $\mathbf{\mathcal{P}^{\ast}_p}: \mathcal{L}^2_p\to \mathcal{L}^2$ and $\mathbf{\mathcal{P}^{\ast}_s}: \mathcal{L}^2_s\to \mathcal{L}^2$ are just the inclusions
from $\mathcal{L}^2_p$ and $\mathcal{L}^2_s$, respectively, to $\mathcal{L}^2$.


Consider the elastic Herglotz wave function
\be\label{eHerglotz}
\mathbf{v_g(x)}:=\int_{\mathbb{S}}\left\{e^{ik_p\mathbf{x}\cdot\mathbf{d}}g_p(\mathbf{d})\mathbf{d}+e^{ik_s\mathbf{x}\cdot\mathbf{d}}g_s(\mathbf{d})\mathbf{d}^{\perp}\right\}ds(\mathbf{d}),\quad \mathbf{x}\in \R^2,
\en
with a vector Herglotz kernel $\mathbf{g}=(g_p; g_s)\in \mathcal{L}^2$.
We now introduce the far field operator $\mathbf{F}:\mathcal{L}^2\rightarrow \mathcal{L}^2$ defined by
\be\label{ffoperator}
\mathbf{(Fg)(\hx)}
&:=&\int_{\mathbb{S}}\mathbf{u}^{\infty}\left(\mathbf{\hx},\mathbf{d},g_p(\mathbf{d}), g_s(\mathbf{d})\right)ds(\mathbf{d})\cr
&=&\int_{\mathbb{S}}
\left(
    \begin{array}{cccc}
      u_{pp}^\infty(\mathbf{\hx,d}) \quad u_{sp}^\infty(\mathbf{\hx,d}) \\
      u_{ps}^\infty(\mathbf{\hx,d}) \quad u_{ss}^\infty(\mathbf{\hx,d}) \\
      \end{array}
  \right)\left(
           \begin{array}{c}
             g_p(\mathbf{d}) \\
             g_s(\mathbf{d}) \\
           \end{array}
         \right)
ds(\mathbf{d}),
\en
i.e., $\mathbf{Fg}$ is the far field pattern for the scattering of elastic Herglotz wave function with kernel $\mathbf{g}$.
The far field operator $\mathbf{F}$ plays an essential role in the investigations of the sampling type methods for inverse problems,
we refer to \cite{Arens2001} for a survey on the state of the art of its properties and applications.

For any $\mathbf{z}\in\R^2$, recall the two test functions $\phi^{p}_\mathbf{z}\in L^2(\mathbb{S})$ and $\phi^{s}_\mathbf{z}\in L^2(\mathbb{S})$, we define
$\bbphi_\mathbf{z}\in \mathcal{L}^2$ by
\be\label{phiz}
\mathbf{\bbphi_z}(\bbtheta):=(\phi^{p}_\mathbf{z}; \phi^{s}_\mathbf{z})=e^{-ik_p\mathbf{z}\cdot\bbtheta}(\mathbf{q}\cdot\bbtheta)\bbtheta+e^{-ik_s\mathbf{z}\cdot\bbtheta}(\mathbf{q}\cdot\bbtheta^{\perp})\bbtheta^{\perp},\quad \bbtheta\in \mathbb{S}.
\en
By interchanging orders of integration, we may rewrite our indicator $I_{\rm FF}$ given by (\ref{Indicatorff}) in a very simple form
\be\label{IndLiunewformFF}
I_{\bf\rm FF}(\mathbf{z})=|( \mathbf{F\bbphi_z}, \mathbf{\bbphi_z})_{\mathbb{S}}|,\quad \mathbf{z}\in\R^2,
\en
where $(\cdot, \cdot)_{\mathbb{S}}$ is the inner product of the space $\mathcal{L}^2$ given in \eqref{innerproduct}.
Similarly, by defining  $\mathbf{F_p}:=\mathbf{\mathcal{P}_pF\mathcal{P}^\ast_p}$ and $\mathbf{F_s}:=\mathbf{\mathcal{P}_sF\mathcal{P}^\ast_s}$,
we found that the indicators  $I_{\bf\rm PP}$ and $I_{\bf\rm SS}$ given by (\ref{Indicatorpp}) and (\ref{Indicatorss}) can be written as a very simple form,
\be\label{IndLiunewformPPSS}
I_{\bf\rm PP}(\mathbf{z}):= \Big|( \mathbf{F_p\bbphi^p_z}, \mathbf{\bbphi^p_z})_{\mathbb{S}}\Big| \quad\mbox{and}\quad
I_{\bf\rm SS}(\mathbf{z}):= \Big|( \mathbf{F_{s}\bbphi^s_z}, \mathbf{\bbphi^s_z})_{\mathbb{S}}\Big|, \quad \mathbf{z}\in\R^2,
\en
with $\mathbf{\bbphi^p_z}:=(\phi^{p}_\mathbf{z}; 0)$ and $\mathbf{\bbphi^s_z}:=(0; \phi^{s}_\mathbf{z})$, respectively.

Recall the Green's tensor
\ben
\mathbf{\Phi(x,y)}
&=&\frac{i}{4\mu}H_0^{(1)}(k_s\vert \mathbf{x-y} \vert)I\cr
&&+\frac{i}{4\om^2}\grad_{\mathbf{x}}\grad_{\mathbf{x}}^{T}(H_0^{(1)}(k_s\vert\mathbf{x-y} \vert)-H_0^{(1)}(k_p\vert\mathbf{x-y}\vert)), \quad \mathbf{x\neq y}
\enn
of the Navier equation in $\R^2$ in terms of the identity matrix $I$ and the Hankel function $H^{(1)}_0$ of the first kind of order zero.
For any $\mathbf{y}\in\R^2$ and any polarization $\mathbf{q}\in\mathbb{S}$, an elastic point source with source point $\mathbf{y}$ and polarization $\mathbf{q}$ is given by
$\mathbf{\Phi(x,y)q},\,\mathbf{x}\in\R^2\ba\{\mathbf{y}\}$.
From the asymptotics for the Hankel function $H_0^{(1)}$  it follows that the far field pattern $\mathbf{\Phi^{\infty}(\cdot,y; q)}$
of the point source $\mathbf{\Phi(x,y)q},\,\mathbf{x}\in\R^2\ba\{\mathbf{y}\}$ is given by
\be
\mathbf{\Phi}^\infty_{p}(\mathbf{\hx, y; q})&=&  e^{-ik_p\mathbf{\hx\cdot y}}(\mathbf{q}\cdot\mathbf{\hx}) = \phi^{p}_\mathbf{z}\mathbf{(\hx)},\quad\mathbf{\hx}\in \mathbb{S},\\
\mathbf{\Phi}^\infty_{s}(\mathbf{\hx, y; q})&=&  e^{-ik_s\mathbf{\hx\cdot y}}(\mathbf{q}\cdot\mathbf{\hx}^{\perp})= \phi^{s}_\mathbf{z}\mathbf{(\hx)},\quad \mathbf{\hx}\in \mathbb{S}.
\en
Now we introduce the elastic single-layer operator $\mathbf{S}: [H^{-1/2}(\pa\Om)]^2 \rightarrow [H^{1/2}(\pa\Om)]^2$, given by
\be\label{singlelayerpotential}
(\mathbf{S}\mathbf{\bbphi})(\mathbf{x}):=\int_{\pa\Om}\mathbf{\Phi(x,y)\bbphi(y)}ds(\mathbf{y}),\quad \mathbf{x}\in\pa\Om.
\en
Then the following property of the single-layer operator $\mathbf{S}$ is important for our subsequent analysis.

\begin{lemma}\label{Scoercive}
Assume that $\om^2$ is not a Dirichlet eigenvalue of $-\Delta^{\ast}$ in $\Om$. Then there exist $c>0$ with
\ben
|\langle \bbphi, \mathbf{S}\bbphi \rangle| \geq c\|\phi\|^2_{[H^{-1/2}(\pa\Om)]^2},\quad \bbphi\in [H^{-1/2}(\pa\Om)]^2,
\enn
where $\langle\cdot, \cdot\rangle$ denotes the duality pairing in $\langle[H^{-1/2}(\pa\Om)]^2 , [H^{1/2}(\pa\Om)]^2\rangle$.
\end{lemma}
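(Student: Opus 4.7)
The plan is to realize $\mathbf{S}\bbphi$ as the common Dirichlet trace of the elastic single-layer potential and then use Betti's formula in the interior and exterior of $\Om$ to turn $\langle\bbphi,\mathbf{S}\bbphi\rangle$ into energy-type volume integrals, from which a G\aa rding inequality can be read off.  More precisely, for $\bbphi\in[H^{-1/2}(\pa\Om)]^2$ I would set
\[
\mathbf{u}(\mathbf{x}):=\int_{\pa\Om}\mathbf{\Phi(x,y)\bbphi(y)}\,ds(\mathbf{y}),\qquad \mathbf{x}\in\R^2\ba\pa\Om.
\]
The standard jump relations for the elastic single-layer potential give $\mathbf{u}^{-}|_{\pa\Om}=\mathbf{u}^{+}|_{\pa\Om}=\mathbf{S}\bbphi$, together with $\mathbb{T}_{\nu}\mathbf{u}^{-}-\mathbb{T}_{\nu}\mathbf{u}^{+}=\bbphi$ on $\pa\Om$, while $\mathbf{u}$ satisfies the Navier equation on each side of $\pa\Om$ and the Kupradze radiation condition \eqref{KupradzeRC} at infinity.

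Next I would apply Betti's first identity in $\Om$ and in $B_R\ba\ov{\Om}$ for a large ball $B_R$, subtract the two identities, and use the jump relation for the traction.  This gives, after letting $R\to\infty$,
\[
\langle\bbphi,\mathbf{S}\bbphi\rangle
=\Bigl(\int_{\Om}+\int_{\R^2\ba\ov\Om}\Bigr)\bigl[\mathcal{E}(\mathbf{u},\mathbf{u})-\om^2|\mathbf{u}|^2\bigr]\,dx
+ R_{\infty}(\mathbf{u}),
\]
where $\mathcal{E}(\mathbf{u},\mathbf{u})=2\mu|\hat{\varepsilon}(\mathbf{u})|^2+\la|\ddiv\mathbf{u}|^2$ is the elastic strain energy density and $R_{\infty}(\mathbf{u})$ is a boundary contribution at infinity.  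The radiation conditions \eqref{KupradzeRC} together with the far-field asymptotics \eqref{usasymptotic}--\eqref{Tusasymptotic} make $R_{\infty}(\mathbf{u})$ a nonpositive imaginary multiple of $\|\mathbf{u}^{\infty}\|^2_{\mathcal{L}^2}$, hence harmless.  The volume integrand splits naturally into a positive part coming from the static strain energy and a lower-order part $\om^2|\mathbf{u}|^2$ that, as a bilinear form in $\bbphi$ through $\mathbf{u}=\mathbf{S}\bbphi$ (extended by the volume potential), is compact.

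The last step is to upgrade the resulting G\aa rding inequality
\[
|\langle\bbphi,\mathbf{S}\bbphi\rangle|\ge c_0\|\bbphi\|^2_{[H^{-1/2}(\pa\Om)]^2}-|\langle\mathbf{K}\bbphi,\bbphi\rangle|
\]
(with $\mathbf{K}$ compact on $[H^{-1/2}(\pa\Om)]^2$) to genuine coercivity.  This is where the non-eigenvalue hypothesis enters: if $\mathbf{S}\bbphi=0$, then $\mathbf{u}^{-}|_{\pa\Om}=0$, and since $\om^2$ is not a Dirichlet eigenvalue of $-\Delta^{\ast}$ in $\Om$ we conclude $\mathbf{u}^{-}\equiv 0$ in $\Om$; the exterior piece $\mathbf{u}^{+}$ is a radiating solution with vanishing Dirichlet trace, so by uniqueness for the exterior Dirichlet problem $\mathbf{u}^{+}\equiv 0$ as well, and the jump relation then forces $\bbphi=0$.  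Hence $\mathbf{S}$ is injective; combined with the G\aa rding inequality it is a Fredholm isomorphism, and a standard Peetre-type argument (compactness plus injectivity) converts the G\aa rding bound into the stated strict lower bound.

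The main obstacle will be the careful bookkeeping in the exterior Betti identity: one has to verify that the $R\to\infty$ limit of the boundary term on $\pa B_R$ reduces to a controllable radiation term, which requires combining the two decoupled radiation conditions in \eqref{KupradzeRC} with the Helmholtz decomposition of $\mathbf{u}^{+}$ and handling the cross terms between the compressional and shear parts (these cross terms decay strictly faster and disappear in the limit).  Once that identity is in place, the separation into a coercive principal part and a compact remainder is essentially algebraic, and the rest follows by standard Fredholm theory.
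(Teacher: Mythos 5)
Your overall circle of ideas (single-layer potential, jump relations, Betti's identity, radiation condition, compact perturbation, and the non-eigenvalue assumption) is the right one, but the final step contains a genuine gap. A G\aa rding inequality plus injectivity of $\mathbf{S}$, upgraded by a Peetre/compactness argument, only yields an \emph{operator} lower bound of the form $\|\mathbf{S}\bbphi\|_{[H^{1/2}(\pa\Om)]^2}\geq c\|\bbphi\|_{[H^{-1/2}(\pa\Om)]^2}$, i.e.\ that $\mathbf{S}$ is an isomorphism. The lemma asserts something strictly stronger, a bound on the \emph{numerical range}: $|\langle \bbphi,\mathbf{S}\bbphi\rangle|\geq c\|\bbphi\|^2$. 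An invertible operator can perfectly well have $\langle\bbphi,\mathbf{S}\bbphi\rangle=0$ for some $\bbphi\neq 0$, so injectivity (even bijectivity) of $\mathbf{S}$ does not close the argument. What is actually needed is the structure exploited in Lemma 1.17 of Kirsch--Grinberg, which is precisely what the paper cites: a splitting $\mathbf{S}=\mathbf{S}_0+\mathbf{C}$ with $\mathbf{S}_0$ \emph{self-adjoint and coercive} and $\mathbf{C}$ compact, together with the property $\mathrm{Im}\,\langle\bbphi,\mathbf{S}\bbphi\rangle\neq 0$ for all $\bbphi\neq 0$. The contradiction argument then runs on a normalized sequence with $\langle\bbphi_n,\mathbf{S}\bbphi_n\rangle\to 0$: self-adjointness of $\mathbf{S}_0$ forces the imaginary part through the compact term only, the nonvanishing-imaginary-part condition kills the weak limit, and coercivity of $\mathbf{S}_0$ then contradicts $\|\bbphi_n\|=1$. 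Your sketch supplies neither the self-adjointness of the principal part in a usable form nor the nonvanishing of the imaginary part as a hypothesis in the final step; it is exactly the latter (via Rellich's lemma, the radiation condition, and the assumption that $\om^2$ is not a Dirichlet eigenvalue of $-\Delta^{\ast}$ in $\Om$) that replaces your appeal to mere injectivity.

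Two further technical cautions. First, in two dimensions the natural candidate you propose for the coercive principal part, the static ($\om=0$) strain-energy operator, is problematic because of the logarithmic behaviour of the fundamental solution (the same issue as for the Laplace single-layer operator, which is coercive only under a capacity restriction on $\Om$); the standard remedy, and the one behind Arens's Lemma 4.2 invoked by the paper, is to compare with the single-layer operator at a purely imaginary frequency, which is self-adjoint and coercive without such restrictions. Second, the paper's own proof is simply the combination of these two citations (Kirsch--Grinberg Lemma 1.17 and Arens Lemma 4.2), so a self-contained argument along your lines is a legitimate alternative, but it must culminate in the weak-compactness argument for the sesquilinear form described above rather than in a Fredholm/Peetre argument for the operator.
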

\begin{proof}
This property follows immediately from Lemma 1.17 of \cite{KirschGrinberg} in combination with Lemma 4.2 of \cite{Arens2001}.
\end{proof}

Define the data-to-pattern operator $\mathbf{G} : [H^{1/2}(\pa\Om)]^2 \rightarrow \mathcal{L}^2$ by
\be
\mathbf{Gh}:= \mathbf{u}^\infty,
\en
where $\mathbf{u}^\infty$ is the far field pattern of the solution to the Dirichlet boundary value problem
with boundary data $\mathbf{h}$. By a standard argument, we have that the data-to-pattern operator $\mathbf{G} : [H^{1/2}(\pa\Om)]^2 \rightarrow \mathcal{L}^2$ is compact, injective with dense range
in $\mathcal{L}^2$.
To characterize the scatterer $\Om$ by the corresponding date-to-pattern operator, we have the following lemma.
\begin{lemma}\label{Gproperty}
For any $\mathbf{z}\in\R^2$ and $\mathbf{q}\in\mathbb{S}$, define the function $\mathbf{\bbphi_z}$ by \eqref{phiz}. Then the following holds.
\begin{itemize}
  \item $\mathbf{z}\in\Om$ if and only if\, $\mathbf{\bbphi_z}(\mathbf{\hx; q})\in\mathcal {R}(\mathbf{G})$.
  \item $\mathbf{z}\in\Om$ if and only if\, $\mathbf{\bbphi_z^{p}}(\mathbf{\hx; q})\in\mathcal {R}(\mathbf{\mathcal{P}_pG})$.
  \item $\mathbf{z}\in\Om$ if and only if\, $\mathbf{\bbphi_z^{s}}(\mathbf{\hx; q})\in\mathcal {R}(\mathbf{\mathcal{P}_sG})$.
\end{itemize}
\end{lemma}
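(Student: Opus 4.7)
The plan is to identify $\mathbf{\bbphi_z}$ as the far field pattern of the elastic point source $\mathbf{\Phi(\cdot,z)q}$---a fact noted just above the lemma via the identities $\mathbf{\Phi}^\infty_p=\phi^p_\mathbf{z}$ and $\mathbf{\Phi}^\infty_s=\phi^s_\mathbf{z}$---and then to run a Rellich-type uniqueness argument that plays the smoothness of an $\mathbf{h}\in[H^{1/2}(\pa\Om)]^2$ against the point-source singularity at $\mathbf{z}$.

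For the first equivalence, the forward direction is constructive. Assume $\mathbf{z}\in\Om$ and set $\mathbf{h_z}:=\mathbf{\Phi}(\cdot,\mathbf{z})\mathbf{q}|_{\pa\Om}\in[H^{1/2}(\pa\Om)]^2$. Then $\mathbf{\Phi}(\cdot,\mathbf{z})\mathbf{q}$ is a radiating solution of the Navier equation on $\R^2\ba\ov{\Om}$ with this Dirichlet trace, so by the defining property of $\mathbf{G}$ together with uniqueness for the exterior Dirichlet problem one obtains $\mathbf{G}\mathbf{h_z}=\mathbf{\bbphi_z}$. Conversely, suppose $\mathbf{\bbphi_z}=\mathbf{Gh}$ for some $\mathbf{h}$ but $\mathbf{z}\notin\Om$, and let $\mathbf{w}$ be the radiating solution on $\R^2\ba\ov{\Om}$ with Dirichlet data $\mathbf{h}$. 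Then $\mathbf{w}$ and $\mathbf{\Phi}(\cdot,\mathbf{z})\mathbf{q}$ share the same compressional and shear far field patterns; applying Rellich's lemma separately to the two scalar Helmholtz equations satisfied by $\mathbf{w}_p,\mathbf{w}_s$ (with wave numbers $k_p$ and $k_s$) and using unique continuation, one concludes $\mathbf{w}\equiv\mathbf{\Phi}(\cdot,\mathbf{z})\mathbf{q}$ on the unbounded component of $\R^2\ba(\ov{\Om}\cup\{\mathbf{z}\})$. The point source is singular at $\mathbf{z}$ while $\mathbf{w}$ is real-analytic there, contradiction. The boundary case $\mathbf{z}\in\pa\Om$ is ruled out by the same identification, since the boundary trace of $\mathbf{\Phi}(\cdot,\mathbf{z})\mathbf{q}$ fails to lie in $[H^{1/2}(\pa\Om)]^2$ near $\mathbf{z}$.

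The two projected statements have the same skeleton. For the forward implication, I would apply $\mathbf{\mathcal{P}_p}$ (respectively $\mathbf{\mathcal{P}_s}$) to the identity $\mathbf{G}\mathbf{h_z}=\mathbf{\bbphi_z}$ and invoke $\mathbf{\mathcal{P}_p}\mathbf{\bbphi_z}=\mathbf{\bbphi^p_z}$, $\mathbf{\mathcal{P}_s}\mathbf{\bbphi_z}=\mathbf{\bbphi^s_z}$, so that $\mathbf{\bbphi^p_z}=\mathbf{\mathcal{P}_pG}\mathbf{h_z}$ and analogously for the shear case. The converse requires only one component of the far field to match: if $\mathbf{\bbphi^p_z}=\mathbf{\mathcal{P}_pGh}$ for some $\mathbf{h}$, the compressional part $\mathbf{w}_p$ of the associated radiating solution satisfies the Helmholtz equation with wave number $k_p$ outside $\ov{\Om}$ and has exactly the same compressional far field pattern $\phi^p_\mathbf{z}$ as the compressional part of $\mathbf{\Phi}(\cdot,\mathbf{z})\mathbf{q}$. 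Rellich for this scalar Helmholtz equation forces the two to coincide outside $\ov{\Om}$, and the singularity of the compressional part of $\mathbf{\Phi}(\cdot,\mathbf{z})\mathbf{q}$ at $\mathbf{z}$ again rules out $\mathbf{z}\notin\ov{\Om}$; the shear case is symmetric.

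The step I expect to need the most care is precisely this last reduction: one has to confirm that the compressional (respectively shear) component of the elastic point source is by itself singular at the source point, and not just that the sum is. This is immediate from the explicit expression of $\mathbf{\Phi}$ in terms of derivatives of $H_0^{(1)}(k_p|\cdot-\mathbf{z}|)$ and $H_0^{(1)}(k_s|\cdot-\mathbf{z}|)$, but it is the one place where the Helmholtz decomposition of the Green's tensor is genuinely used rather than treated as a formality.
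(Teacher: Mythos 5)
Your proof is correct and is essentially the argument the paper itself relies on: the paper disposes of the lemma by citing Arens (Theorem 4.7 of the 2001 paper) for the first equivalence and declaring the projected versions analogous, and the cited proof is precisely the identification of the test function as the far field of the point source $\mathbf{\Phi}(\cdot,\mathbf{z})\mathbf{q}$ followed by the Rellich/unique-continuation/singularity argument you reconstruct, including the observation that the compressional and shear parts of the point source are individually singular at $\mathbf{z}$. The one detail to keep in mind is that for the projected statements the exclusion of $\mathbf{z}\in\pa\Om$ cannot literally reuse your first-bullet trace argument (there you only control one part of the field, not the full trace), but the non-square-integrability of that individual part near a boundary point, compared with the $H^1$-regularity of the exterior solution up to $\pa\Om$, plays the same role.
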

\begin{proof}
The first result has been proved by Arens in Theorem 4.7 of \cite{Arens2001}. The other two can be done analogously in combination with the definitions of the projection operators.
\end{proof}

We introduce the Hergoltz wave operator $\mathbf{H}: \mathcal{L}^2 \rightarrow [H^{1/2}(\pa\Om)]^2$ by setting
\ben
(\mathbf{H}\mathbf{g})(\mathbf{x}):=\int_{\mathbb{S}}\left\{e^{ik_p\mathbf{x}\cdot\mathbf{d}}g_p(\mathbf{d})\mathbf{d}+e^{ik_s\mathbf{x}\cdot\mathbf{d}}g_s(\mathbf{d})\mathbf{d}^{\perp}\right\}ds(\mathbf{d}),\quad \mathbf{x}\in \pa\Om.
\enn
$\mathbf{H}\mathbf{g}$  is the trace on $\pa\Om$ of the elastic Herglotz wave function \eqref{eHerglotz} with vector Herglotz kernel $\mathbf{g}=(g_p; g_s)\in \mathcal{L}^2$.
Its adjoint $\mathbf{H}^{\ast}: [H^{-1/2}(\pa\Om)]^2 \rightarrow \mathcal{L}^2$ is given by
\ben
(\mathbf{H}^{\ast}\bbphi)(\mathbf{d}):=\left(\int_{\pa\Om}e^{-ik_p\mathbf{x}\cdot\mathbf{d}}\mathbf{d}\cdot\bbphi(\mathbf{x})ds(\mathbf{x}); \int_{\pa\Om}e^{-ik_s\mathbf{x}\cdot\mathbf{d}}\mathbf{d}^{\perp}\cdot\bbphi(\mathbf{x})ds(\mathbf{x})\right),\quad \mathbf{d}\in \mathbb{S}.
\enn
We note that $\mathbf{H}^{\ast}\bbphi$ is exactly the far field pattern of the elastic single-layer potential $\mathbf{S\bbphi}$, hence
\ben
\mathbf{H}^{\ast} = \mathbf{GS} \quad\mbox{and therefore}\quad \mathbf{H} = \mathbf{S^{\ast}G^{\ast}},
\enn
where $\mathbf{G}^*: \mathcal{L}^2\rightarrow [H^{-1/2}(\partial \Omega)]^2$ and $\mathbf{S}^*: [H^{-1/2}(\partial \Omega)]^2\rightarrow [H^{1/2}(\partial \Omega)]^2$ denote the adjoints of
$\mathbf{G}$ and $\mathbf{S}$, respectively.
On the other hand, we observe that $\mathbf{Fg}$ is the far field pattern of the solution of the exterior Dirichlet problem with boundary data $-(\mathbf{H}\mathbf{g})|_{\pa\Om}$,
which implies $\mathbf{F=-GH}$. Combining the previous operator equality, we deduce the factorization of the far field operator
\be\label{ffactorization}
\mathbf{F}=-\mathbf{GS^{\ast}G^\ast},
\en
Consequently, we have the factorizations of the operators $\mathbf{F}_p$ and $\mathbf{F}_s$,
\be\label{ppssfactorization}
\mathbf{F}_p=-\mathbf{(\mathcal{P}_pG)S^{\ast}(\mathcal{P}_p G)^\ast},\quad
\mathbf{F}_s=-\mathbf{(\mathcal{P}_sG)S^{\ast}(\mathcal{P}_s G)^\ast}.
\en

For all $\mathbf{z}\in \R^2$ and polarization $\mathbf{q}\in\mathbb{S}$, define a subspace $\mathbf{A_z}\subset \mathcal{L}^2$ by
\ben
\mathbf{A_z}:=\{\mathbf{\bbpsi}\in \mathcal{L}^2:\,(\mathbf{\bbpsi }, \mathbf{\bbphi_z })_{\mathbb{S}}=1\},
\enn
where $\mathbf{\bbphi_z}$ is the test function given in \eqref{phiz} and $(\cdot, \cdot)_{\mathbb{S}}$ is the inner product of $\mathcal{L}^2$ defined by \eqref{innerproduct}.
Now we are in a position to state the main result of this subsection.

\begin{theorem}\label{theorem1} Consider the inverse elastic scattering by a rigid body $\Om$. Assume that $\om^2$ is not a Dirichlet eigenvalue of $-\Delta^{\ast}$ in $\Om$.
Let $\mathbf{q}\in\mathbb{S}$ be the polarization.
Then $\mathbf{z}\in\Omega$, if and only if,
\be\label{infcreterior}
\inf\{\mathbf{|(F\bbpsi, \bbpsi)_{\mathbb{S}}|:\,\bbpsi\in A_z}\}>0.
\en
Furthermore, we have the lower bound estimate
\be\label{estimateff}
I_{\bf\rm FF}(\mathbf{z})\geq \frac{c}{\|\mathbf{\Phi(\cdot,z)q}\|^2_{[H^{1/2}(\partial\Omega)]^2}},\quad \mathbf{z}\in\Omega,
\en
for some constant $c>0$ which is independent of $\mathbf{z}$.
Similar, we have
\be\label{estimateppss}
I_{\bf\rm PP}(\mathbf{z})\geq \frac{c}{\|\mathbf{\Phi(\cdot,z)q}\|^2_{[H^{1/2}(\partial\Omega)]^2}}\quad\mbox{and}\quad
I_{\bf\rm SS}(\mathbf{z})\geq \frac{c}{\|\mathbf{\Phi(\cdot,z)q}\|^2_{[H^{1/2}(\partial\Omega)]^2}},\quad \mathbf{z}\in\Omega,\qquad
\en
for some constant $c>0$ which is independent of $\mathbf{z}$.
\end{theorem}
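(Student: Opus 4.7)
The plan is to combine the factorization $\mathbf{F}=-\mathbf{GS^{\ast}G^{\ast}}$ from \eqref{ffactorization}, the coercivity of $\mathbf{S^{\ast}}$ that follows from Lemma \ref{Scoercive}, and the range characterization of $\mathbf{G}$ from Lemma \ref{Gproperty}. This places the argument squarely inside the Kirsch--Grinberg factorization framework, and I would model the inf-criterion proof on Lemma 1.17 of \cite{KirschGrinberg}.

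For the forward direction of \eqref{infcreterior}, Lemma \ref{Gproperty} gives some $\mathbf{h_z}\in [H^{1/2}(\partial\Omega)]^2$ with $\mathbf{Gh_z}=\bbphi_\mathbf{z}$ whenever $\mathbf{z}\in\Omega$, and uniqueness for the exterior Dirichlet problem identifies $\mathbf{h_z}$ with $\mathbf{\Phi(\cdot,z)q}|_{\partial\Omega}$. For any $\bbpsi\in\mathbf{A_z}$, rewriting the constraint as $1=(\bbpsi,\mathbf{Gh_z})_{\mathbb S}=\langle\mathbf{G^{\ast}\bbpsi},\mathbf{h_z}\rangle$ and applying Cauchy--Schwarz gives $\|\mathbf{G^{\ast}\bbpsi}\|_{[H^{-1/2}(\partial\Omega)]^2}\geq 1/\|\mathbf{h_z}\|_{[H^{1/2}(\partial\Omega)]^2}$. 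Combining with \eqref{ffactorization} and Lemma \ref{Scoercive} then yields
\[
|(\mathbf{F\bbpsi,\bbpsi})_{\mathbb S}|=|\langle\mathbf{S^{\ast}G^{\ast}\bbpsi},\mathbf{G^{\ast}\bbpsi}\rangle|\geq c\,\|\mathbf{G^{\ast}\bbpsi}\|^2\geq \frac{c}{\|\mathbf{\Phi(\cdot,z)q}\|^2_{[H^{1/2}(\partial\Omega)]^2}},
\]
uniformly in $\bbpsi\in\mathbf{A_z}$. For the converse, when $\mathbf{z}\notin\Omega$ Lemma \ref{Gproperty} gives $\bbphi_\mathbf{z}\notin\mathcal{R}(\mathbf{G})$, so the linear functional $\bbpsi\mapsto(\bbpsi,\bbphi_\mathbf{z})_{\mathbb S}$ cannot be dominated by any multiple of $\|\mathbf{G^{\ast}\bbpsi}\|$; a standard Hahn--Banach argument then produces a sequence $\bbpsi_n\in\mathbf{A_z}$ with $\|\mathbf{G^{\ast}\bbpsi_n}\|\to 0$, which drives $|(\mathbf{F\bbpsi_n,\bbpsi_n})_{\mathbb S}|$ to zero by boundedness of $\mathbf{S^{\ast}}$. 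This converse step is the part I expect to require the most care, since it rests on the equivalence between range membership and boundedness of the dual pairing rather than on any explicit estimate.

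To deduce \eqref{estimateff}, I would apply the same chain directly to $\bbphi_\mathbf{z}$ itself (noting that $\bbphi_\mathbf{z}$ is not normalized into $\mathbf{A_z}$, so the inf-criterion cannot be used as a black box): coercivity gives $I_{\bf\rm FF}(\mathbf{z})\geq c\,\|\mathbf{G^{\ast}\bbphi_z}\|^2$, while the identity $\|\bbphi_\mathbf{z}\|_{\mathbb S}^2=(\mathbf{Gh_z},\bbphi_\mathbf{z})_{\mathbb S}=\langle\mathbf{G^{\ast}\bbphi_z},\mathbf{h_z}\rangle$ combined with Cauchy--Schwarz yields $\|\mathbf{G^{\ast}\bbphi_z}\|\geq\|\bbphi_\mathbf{z}\|_{\mathbb S}^2/\|\mathbf{h_z}\|$. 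The key observation that collapses the $\mathbf{z}$-dependence is that $\|\bbphi_\mathbf{z}\|_{\mathbb S}^2=\int_{\mathbb S}(|\mathbf{q}\cdot\bbtheta|^2+|\mathbf{q}\cdot\bbtheta^{\perp}|^2)\,ds(\bbtheta)=2\pi$ is a constant, because $|\mathbf{q}|=1$ makes $\{\bbtheta,\bbtheta^{\perp}\}$ an orthonormal frame. These two estimates combine to give \eqref{estimateff}. The bounds \eqref{estimateppss} follow from the identical argument applied with the factorizations $\mathbf{F_p}=-(\mathbf{\mathcal{P}_pG})\mathbf{S^{\ast}}(\mathbf{\mathcal{P}_pG})^{\ast}$ and $\mathbf{F_s}=-(\mathbf{\mathcal{P}_sG})\mathbf{S^{\ast}}(\mathbf{\mathcal{P}_sG})^{\ast}$ from \eqref{ppssfactorization}, using items two and three of Lemma \ref{Gproperty} to produce the analogues of $\mathbf{h_z}$ for $\bbphi_\mathbf{z}^p$ and $\bbphi_\mathbf{z}^s$; once more $\|\bbphi_\mathbf{z}^p\|^2_{\mathbb S}=\|\bbphi_\mathbf{z}^s\|^2_{\mathbb S}=\pi$ is independent of $\mathbf{z}$.
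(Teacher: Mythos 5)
Your proposal is correct and takes essentially the same route as the paper's proof: both rest on the factorization \eqref{ffactorization}--\eqref{ppssfactorization}, the coercivity of $\mathbf{S}$ from Lemma \ref{Scoercive}, the range characterization of Lemma \ref{Gproperty}, and the identity $(\mathbf{\bbphi_z},\mathbf{\bbphi_z})_{\mathbb{S}}=2\pi$ (respectively $\pi$ for the PP/SS components). The only difference is presentational: the paper invokes Theorem 1.16 of \cite{KirschGrinberg} as a black box for both the inf-criterion \eqref{infcreterior} and its quantitative bound and then feeds in the normalized element $\frac{1}{2\pi}\mathbf{\bbphi_z}\in\mathbf{A_z}$, whereas you unpack that theorem's Cauchy--Schwarz/coercivity argument and its range--duality converse inline and apply the estimate to $\mathbf{\bbphi_z}$ directly, which yields the same constants.
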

\begin{proof}
First, \eqref{infcreterior} follows directly by applying Theorem 1.16 in \cite{KirschGrinberg} to the factorization \eqref{ffactorization}, Lemma \ref{Gproperty} and noting the fact that the operator $\mathbf{S}$
is coercive by Lemma \ref{Scoercive}.
Furthermore, note that $\mathbf{\bbphi_z}(\mathbf{\hx; q})=G(-\Phi(\cdot,z)q|_{\pa\Om})$, using Theorem 1.16 of \cite{KirschGrinberg} again we deduce that
\ben
\inf\{\mathbf{|(F\bbpsi, \bbpsi)_{\mathbb{S}}|:\,\bbpsi\in A_z}\}\geq \frac{c}{\|\mathbf{\Phi(\cdot,z)q}\|^2_{[H^{1/2}(\partial\Omega)]^2}}
\enn
for some constant $c>0$ which is independent of $\mathbf{z}$.
Second, we observe that
\be\label{Phiphi}
(\mathbf{\bbphi_z}, \mathbf{\bbphi_z})_{\mathbb{S}}=2\pi,
\en
which implies $\mathbf{\bbpsi_0}:=\frac{1}{2\pi}\mathbf{\bbphi_z}\in \mathbf{A_z}$. From this we derive that
\ben
I_{\bf\rm FF}(\mathbf{z})
&=& \mathbf{|(F\bbphi_z, \bbphi_z)_{\mathbb{S}}|}\cr
&=& 4\pi^2\mathbf{|(F\bbpsi_0, \bbpsi_0)_{\mathbb{S}}|}\cr
&\geq& 4\pi^2\inf\{\mathbf{|(F\bbpsi, \bbpsi)_{\mathbb{S}}|:\,\bbpsi\in A_z}\}\cr
&\geq& \frac{c}{\|\mathbf{\Phi(\cdot,z)q}\|^2_{[H^{1/2}(\partial\Omega)]^2}}
\enn
for some constant $c>0$ which is independent of $\mathbf{z}$.
Thus, the lower bound estimate \eqref{estimateff} for the indicator $I_{\bf\rm FF}$ in $\Om$ has been proved. The other two lower bound estimates in \eqref{estimateppss} can be shown
analogously using the factorizations \eqref{ppssfactorization}.
\end{proof}

We conclude this subsection by a remark that the analogous result of Theorem \ref{theorem1} holds for the Neumann boundary condition. Our analysis rely on the factorization of the
far field operator. The data-to-pattern operator $\mathbf{G} : [H^{-1/2}(\pa\Om)]^2 \rightarrow \mathcal{L}^2$ is now defined to map $\mathbf{h}\in[H^{-1/2}(\pa\Om)]^2$ into the
far field pattern $\mathbf{u}^\infty=\mathbf{Gh}$ of the exterior Neumann boundary value problem with boundary data $\mathbf{h}$.
We introduce the hypersingular integral operator $\mathbf{N}: [H^{1/2}(\pa\Om)]^2 \rightarrow [H^{-1/2}(\pa\Om)]^2 $ by
\ben
\mathbf{(N\bbphi)(x)}:=\mathbb{T}_{\mathbf{\nu(x)}}\int_{\pa\Om}(\mathbb{T}_{\mathbf{\nu(y)}}\mathbf{\Phi(x,y)})^{T}\bbphi(\mathbf{y})ds(\mathbf{y}),\quad \mathbf{x}\in \pa\Om,
\enn
for $\mathbf{\bbphi}\in [H^{1/2}(\pa\Om)]^2$. Then one can derive the factorization of the far field operator for Neumann problem in the form
\ben
\mathbf{F}=-\mathbf{GN^{\ast}G^{\ast}}.
\enn
Based on this, the analogous results of Lemmas \ref{Scoercive}, \ref{Gproperty} and Theorem \ref{theorem1} can be derived.

Finally, we want to remark that the assumption on interior eigenvalues in Lemma \ref{Scoercive} and Theorem \ref{theorem1} has only its theory interest.
It is well-known that the classical sampling type methods for solving inverse scattering
problems fail if the wave number is an eigenvalue of a corresponding interior eigenvalue problem.
We refer to a modification proposed by Kirsch \& Liu\cite{KirschLiuIP14avoid} to avoid the eigenvalues.
However, our sampling methods are independent of the interior eigenvalues from the numerical point of view.

\subsection{Indicator behavior for the sampling points away from the boundary $\partial \Omega$}

We have known from the previous subsection that the values of the indicator functionals can not be small arbitrarily for sampling points inside.
In this subsection, we study the behavior of the indicator functionals for sampling points away from the boundary.

To simplify the subsequent representations, we introduce
\ben
\mathbf{J(w)}:=\frac{\mathbf{ww}^{T}}{|\mathbf{w}|^2},\quad \mathbf{w}\in\R^2.
\enn
Straightforward calculations show that $\mathbf{I-J(w)}=\mathbf{J(w^{\perp})}$.
For the scattering problem (\ref{elastic})-\eqref{KupradzeRC}, the far-field pattern $(u^\infty_p,u^\infty_s)$ has the following representation \cite{Arens2001}
\be
\label{upinfty}u^\infty_p\mathbf{(\hat{x},d)\hat{x}}&=& \int_{\pa\Om}\Big\{[\mathbb{T}_{\mathbf{\nu(y)}}\mathbf{J(\hat{x})}e^{-ik_p\mathbf{\hat{x}\cdot y}}]^T\mathbf{u}^{sc}\mathbf{(y,d)}\cr
&&\qquad\qquad\qquad\quad -\mathbf{J(\hat{x})}e^{-ik_p\mathbf{\hat{x}\cdot y}}\mathbb{T}_{\mathbf{\nu(y)}}\mathbf{u}^{sc}\mathbf{(y,d)}\Big\}ds(\mathbf{y}),\\
\label{usinfty}u^\infty_s\mathbf{(\hat{x},d)\hat{x}^{\perp}}&=& \int_{\pa\Om}\Big\{[\mathbb{T}_{\mathbf{\nu(y)}}\mathbf{J(\hat{x}^{\perp})}e^{-ik_s\mathbf{\hat{x}\cdot y}}]^T\mathbf{u}^{sc}\mathbf{(y,d)}\cr
&&\qquad\qquad\qquad\quad -\mathbf{J(\hat{x}^{\perp})}e^{-ik_s\mathbf{\hat{x}\cdot y}}\mathbb{T}_{\mathbf{\nu(y)}}\mathbf{u}^{sc}\mathbf{(y,d)}\Big\}ds(\mathbf{y}).
\en
By straightforward calculations, it can be seen that, for all $\mathbf{\hat{x}}\in \mathbb{S},\,y\in \pa\Om$,
\ben
[\mathbb{T}_{\mathbf{\nu(y)}}(\mathbf{J(\hat{x})}e^{-ik_p\mathbf{\hat{x}\cdot y}})]\mathbf{\hat{x}}=\mathbb{T}_{\mathbf{\nu(y)}}(\mathbf{\hat{x}}e^{-ik_p\mathbf{\hat{x}\cdot y}})
\enn
and
\ben
[\mathbb{T}_{\mathbf{\nu(y)}}(\mathbf{J(\hat{x}^{\perp})}e^{-ik_p\mathbf{\hat{x}\cdot y}})]\mathbf{\hat{x}^{\perp}}=\mathbb{T}_{\mathbf{\nu(y)}}(\mathbf{\hat{x}^{\perp}}e^{-ik_p\mathbf{\hat{x}\cdot y}}).
\enn
Inserting this into \eqref{upinfty}-\eqref{usinfty}, we find that
\be
\label{upinfty2}u^\infty_p\mathbf{(\hat{x},d)}&=& \int_{\pa\Om}\Big\{\mathbf{u}_{p}^{in}(\mathbf{y,-\hat{x}})\cdot\mathbb{T}_{\mathbf{\nu(y)}}\mathbf{u}^{sc}\mathbf{(y,d)}\cr
&&\qquad\qquad\qquad\quad -[\mathbb{T}_{\mathbf{\nu(y)}}\mathbf{u}_{p}^{in}(\mathbf{y,-\hat{x}})]\cdot\mathbf{u}^{sc}\mathbf{(y,d)}\Big\}ds(\mathbf{y}),\\
\label{usinfty2}u^\infty_s\mathbf{(\hat{x},d)}&=& \int_{\pa\Om}\Big\{\mathbf{u}_{s}^{in}(\mathbf{y,-\hat{x}})\cdot\mathbb{T}_{\mathbf{\nu(y)}}\mathbf{u}^{sc}\mathbf{(y,d)}\cr
&&\qquad\qquad\qquad\quad -[\mathbb{T}_{\mathbf{\nu(y)}}\mathbf{u}_{s}^{in}(\mathbf{y,-\hat{x}})]\cdot\mathbf{u}^{sc}\mathbf{(y,d)}\Big\}ds(\mathbf{y}).
\en

Now we introduce the following auxiliary functions
\ben
G_{pp}\mathbf{(z,d)}:=\int_{\mathbb{S}}u^\infty_{pp}\mathbf{(\hat{x},d)}\ov{\phi^{p}_\mathbf{z}(\mathbf{\hat{x}})}ds(\mathbf{\hat{x}}),\quad
G_{sp}\mathbf{(z,d)}:=\int_{\mathbb{S}}u^\infty_{sp}\mathbf{(\hat{x},d)}\ov{\phi^{p}_\mathbf{z}(\mathbf{\hat{x}})}ds(\mathbf{\hat{x}}),\\
G_{ps}\mathbf{(z,d)}:=\int_{\mathbb{S}}u^\infty_{ps}\mathbf{(\hat{x},d)}\ov{\phi^{s}_\mathbf{z}(\mathbf{\hat{x}})}ds(\mathbf{\hat{x}}),\quad
G_{ss}\mathbf{(z,d)}:=\int_{\mathbb{S}}u^\infty_{ss}\mathbf{(\hat{x},d)}\ov{\phi^{s}_\mathbf{z}(\mathbf{\hat{x}})}ds(\mathbf{\hat{x}}).
\enn
Then the indicators $I_{\bf\rm FF}$, $I_{\bf\rm PP}$ and $I_{\bf\rm SS}$ involve the terms
\be
\label{G1} \int_{\mathbb{S}}G_{pp}\mathbf{(z,d)}\phi^{p}_\mathbf{z}(\mathbf{d})ds(\mathbf{d}), \quad
\int_{\mathbb{S}}G_{ps}\mathbf{(z,d)}\phi^{p}_\mathbf{z}(\mathbf{d})ds(\mathbf{d}), \\
\int_{\mathbb{S}}G_{sp}\mathbf{(z,d)}\phi^{s}_\mathbf{z}(\mathbf{d})ds(\mathbf{d}),\quad
\int_{\mathbb{S}}G_{ss}\mathbf{(z,d)}\phi^{s}_\mathbf{z}(\mathbf{d})ds(\mathbf{d}).\label{G2}
\en
Noting that $G_{pp}, G_{ps},G_{sp}$ and $G_{ss}\in L^2(\mathbb{S})$, by the well known Riemann-Lebesgue Lemma, we obtain that all the expressions in (\ref{G1})-(\ref{G2}) go to $0$
as $|\mathbf{z}|\rightarrow\infty$, so we have
\be
I_{\bf\rm FF}(\mathbf{z})\rightarrow 0, \quad I_{\bf\rm PP}(\mathbf{z})\rightarrow 0,\quad\mbox{and}\quad I_{\bf\rm SS}(\mathbf{z})\rightarrow 0 \quad \mbox{as}\,\, \mathbf{z}\rightarrow \infty.
\en

For the behaviors of the indicators as the sampling points away from the boundary of the scatterers we proceed by recalling the spherical harmonics
\be\label{sphericalharmonic}
Y_\alpha^\beta(\mathbf{\hx}):=\g e^{i\beta\varphi}, \quad \mathbf{\hx}:=(\cos\varphi, \sin\varphi)^{T}\in \mathbb{S},\,\, \g:=\sqrt{\frac{1}{2\pi}},
\en
for $\beta=\pm\alpha, \alpha=0,1,2,\cdots,$ which form a complete orthonormal system in $L^2(\mathbb{S})$.
In particular, we are interested in the spherical harmonics $Y_\alpha^\beta(\mathbf{\hx})$ of order $\al=0,1,2,3$ with $\mathbf{\hx}=(\hat{x}_1, \hat{x}_2)^{T}\in\mathbb{S}$.
\be\label{sphhar}
&&Y^0_0(\mathbf{\hx})=\g ,\quad
Y^{-1}_1(\mathbf{\hx})=\g (\hat{x}_1-i\hat{x}_2),\quad
Y^{1}_1(\mathbf{\hx})=\g (\hat{x}_1+i\hat{x}_2),\\
&&Y^{-2}_2(\mathbf{\hx})=\g (\hat{x}_1^2-\hat{x}_2^2-2i\hat{x}_1\hat{x}_2),\quad Y^{2}_2(\mathbf{\hx})=\g (\hat{x}_1^2-\hat{x}_2^2+2i\hat{x}_1\hat{x}_2),\\
&&Y^{-3}_3(\mathbf{\hx})=\g (4\hat{x}_1^3-3\hat{x}_1-3i\hat{x}_2+4i\hat{x}_2^3),\quad
Y^{3}_3(\mathbf{\hx})=\g (4\hat{x}_1^3-3\hat{x}_1+3i\hat{x}_2-4i\hat{x}_2^3).\qquad\label{sphhar1}
\en
Using (\ref{sphhar})-(\ref{sphhar1}) and the fact $\hat{x}_1^2+\hat{x}_2^2=1$, we deduce that
\ben
&&\hat{x}_1=\frac{Y^{1}_1(\mathbf{\hx})+Y^{-1}_1(\mathbf{\hx})}{2\g},\quad \hat{x}_2=\frac{Y^{1}_1(\mathbf{\hx})-Y^{-1}_1(\mathbf{\hx})}{2i\g},\\
&&\hat{x}_1^2=\frac{1}{2}+\frac{Y^{2}_2(\mathbf{\hx})+Y^{-2}_2(\mathbf{\hx})}{4\g},\quad
\hat{x}_1\hat{x}_2=\frac{Y^{2}_2(\mathbf{\hx})-Y^{-2}_2(\mathbf{\hx})}{4i\g},\\
&&\hat{x}_2^2=\frac{1}{2}-\frac{Y^{2}_2(\mathbf{\hx})+Y^{-2}_2(\mathbf{\hx})}{4\g},\\
&&\hat{x}_1^3=\frac{Y^{3}_3(\mathbf{\hx})+Y^{-3}_3(\mathbf{\hx})}{8\g}+\frac{3(Y^{1}_1(\mathbf{\hx})+Y^{-1}_1(\mathbf{\hx}))}{8\g},\\
&&\hat{x}_1^2\hat{x}_2=\frac{Y^{1}_1(\mathbf{\hx})-Y^{-1}_1(\mathbf{\hx})}{8i\g}+\frac{Y^{3}_3(\mathbf{\hx})-Y^{-3}_3(\mathbf{\hx})}{8i\g},\\
&&\hat{x}_1\hat{x}_2^2=\frac{Y^{1}_1(\mathbf{\hx})+Y^{-1}_1(\mathbf{\hx})}{8\g}-\frac{Y^{3}_3(\mathbf{\hx})+Y^{-3}_3(\mathbf{\hx})}{8\g},\\
&&\hat{x}_2^3=\frac{3(Y^{1}_1(\mathbf{\hx})-Y^{-1}_1(\mathbf{\hx}))}{8i\g}-\frac{Y^{3}_3(\mathbf{\hx})-Y^{-3}_3(\mathbf{\hx})}{8i\g}.
\enn
Using further the Funk-Hecke formula \cite{LiuIP17}
\be\label{FunkHeckformula}
\int_{\mathbb{S}}e^{-ik\mathbf{z}\cdot\mathbf{\hx} }Y_\alpha^\beta(\mathbf{\hx})ds(\mathbf{\hx})=\frac{2\pi}{i^\alpha}J_{\alpha}(k|\mathbf{z}|)Y_\alpha^\beta(\mathbf{\hat{z}}),
\en
with $J_\alpha$ being the Bessel functions of order $\alpha$, we found that
\ben
\int_{\mathbb{S}}e^{-ik\mathbf{z\cdot\hx}}\hat{x}^m_1\hat{x}^n_2ds(\mathbf{\hx})=\sum_{\alpha\leq 3}[a^{m,n}_{\alpha}Y_\alpha^\alpha(\mathbf{\hat{z}})+b^{m,n}_{\alpha}Y_\alpha^{-\alpha}(\mathbf{\hat{z}})]J_{\alpha}(k|\mathbf{z}|), m+n\leq 3, m,n\in \mathbb{N},
\enn
for some constants $a^{m,n}_{\alpha},\, b^{m,n}_{\alpha}\in \C$.
Furthermore, using the far field representations \eqref{upinfty2}-\eqref{usinfty2} and interchanging the order of integration,
we deduce
\ben
G_{\rm ii}\mathbf{(z,d)}=\sum_{\alpha\leq 3}c_{\alpha}J_{\alpha}(k|\mathbf{y-z}|),\quad \mathbf{z}\in \R^2, \quad \mathbf{y}\in\pa\Om, \quad {\rm ii}=pp, ps, sp, ss.
\enn
This implies that the four auxiliary functions $G_{\rm ii}$ are superpositions of the Bessel functions $J_{\alpha},\alpha=0,1,2,3$, although
we do not know the coefficients $c_{\alpha}$ which depend on the total fields $\mathbf{u}$ on $\pa\Om$.
Thus, we expect that the indicators $I_{\bf\rm FF}$, $I_{\bf\rm PP}$ and $I_{\bf\rm SS}$ decay like the Bessel functions as the sampling point $\mathbf{z}$ goes away from the boundary $\pa\Om$.
Combining the lower bound estimates given in the previous subsection, we expect that {\em the indicators take their maximum near the boundary of the underlying target and decay
like the Bessel functions as the sampling points go away from the boundary.}
This phenomenon will be verified further in a lot of numerical simulations in Section 3.

We end this subsection by a stability statement, which reflects an important feature of the reconstruction scheme.

\begin{theorem}{\rm (Stability statement).}\label{stability}
\be\label{stabilityestimate}
I_{\bf\rm ii}(\mathbf{z})-I^{\delta}_{\bf\rm ii}(\mathbf{z})\leq c\|\mathbf{u}^{\infty}-\mathbf{u}_{\delta}^{\infty}\|_{\mathcal{L}^2},\quad \mathbf{z}\in\R^2,\quad {\bf\rm ii=FF,PP,SS},
\en
where $I^{\delta}_{\bf\rm ii}(\mathbf{z})$ is the indicator functional with $\mathbf{u}^{\infty}$ replaced by $\mathbf{u}_{\delta}^{\infty}$, $c$ is a constant
independent of sampling point $\mathbf{z}$.
\end{theorem}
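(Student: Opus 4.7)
The plan is to exploit the compact bilinear-form representations of the indicators already established in \eqref{IndLiunewformFF} and \eqref{IndLiunewformPPSS} and reduce the stability assertion to continuous dependence of a bounded bilinear form on the far-field data. Let $\mathbf{F}_\delta$ be the far field operator defined exactly as in \eqref{ffoperator} but with $\mathbf{u}^\infty$ replaced by the noisy data $\mathbf{u}^\infty_\delta$, and set $\mathbf{F}_{p,\delta}:=\mathbf{\mathcal{P}_p F_\delta \mathcal{P}_p^*}$ and $\mathbf{F}_{s,\delta}:=\mathbf{\mathcal{P}_s F_\delta \mathcal{P}_s^*}$. Writing $\mathbf{F}_{\rm FF}:=\mathbf{F}$ and $\bbphi^{\rm FF}_z:=\bbphi_z$ for uniform notation, the reverse triangle inequality applied to \eqref{IndLiunewformFF}--\eqref{IndLiunewformPPSS} immediately gives, for every ${\rm ii}\in\{{\rm FF,PP,SS}\}$,
\begin{equation*}
I_{\rm ii}(\mathbf{z})-I^\delta_{\rm ii}(\mathbf{z})\;\le\;\Big|\big((\mathbf{F}_{\rm ii}-\mathbf{F}_{{\rm ii},\delta})\bbphi^{\rm ii}_z,\bbphi^{\rm ii}_z\big)_{\mathbb{S}}\Big|.
\end{equation*}

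Next I would apply the Cauchy--Schwarz inequality in the Hilbert space $\mathcal{L}^2$ and invoke the identity \eqref{Phiphi}, which yields $\|\bbphi_z\|_{\mathcal{L}^2}^2=2\pi$ uniformly in $\mathbf{z}$ (and, by the same computation, $\|\bbphi^{p}_z\|_{\mathcal{L}^2}^2=\|\bbphi^{s}_z\|_{\mathcal{L}^2}^2=\pi$). This reduces the task to bounding the operator norm
\begin{equation*}
\|\mathbf{F}_{\rm ii}-\mathbf{F}_{{\rm ii},\delta}\|_{\mathcal{L}^2\to\mathcal{L}^2}
\;\le\; c\,\|\mathbf{u}^\infty-\mathbf{u}^\infty_\delta\|_{\mathcal{L}^2}.
\end{equation*}
For ${\rm ii}={\rm FF}$, this is a direct consequence of the integral representation \eqref{ffoperator}: $\mathbf{F}-\mathbf{F}_\delta$ is an integral operator on the compact domain $\mathbb{S}$ whose $2\times 2$ matrix kernel consists exactly of the entries of $\mathbf{u}^\infty-\mathbf{u}^\infty_\delta$, and its operator norm is majorized by the Hilbert--Schmidt norm of the kernel via Cauchy--Schwarz inside the integral. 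The cases ${\rm ii}={\rm PP}$ and ${\rm ii}={\rm SS}$ then follow without any additional work because orthogonal projections $\mathbf{\mathcal{P}_p},\mathbf{\mathcal{P}_s}$ and their adjoints are norm-one operators, so $\|\mathbf{F}_{p}-\mathbf{F}_{p,\delta}\|\le\|\mathbf{F}-\mathbf{F}_\delta\|$ and similarly for $s$.

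The chain of inequalities delivers \eqref{stabilityestimate} with a constant $c$ of the form $c=2\pi\cdot c_0$, where $c_0$ is the universal constant arising from the Hilbert--Schmidt bound on the integral operator; crucially, $c$ is independent of $\mathbf{z}$ because the only $\mathbf{z}$-dependent quantity in the estimate is $\|\bbphi^{\rm ii}_z\|_{\mathcal{L}^2}^2$, which by \eqref{Phiphi} is constant. I do not anticipate a serious obstacle: the argument is essentially a bookkeeping exercise that the structural decomposition of the indicators into bilinear forms of a compact integral operator on a fixed test function has already done the real work. The only point that demands a line of care is the Hilbert--Schmidt majorization of the operator norm, but this is standard for integral operators on $\mathbb{S}$ with $L^2$ kernel.
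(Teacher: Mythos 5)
Your proof is correct and follows essentially the same route as the paper's: a reverse triangle inequality followed by Cauchy--Schwarz, with the $\mathbf{z}$-independence of the constant coming from the uniform bound $(\mathbf{\bbphi_z},\mathbf{\bbphi_z})_{\mathbb{S}}=2\pi$ on the test functions. Your operator-norm/Hilbert--Schmidt phrasing via \eqref{IndLiunewformFF}--\eqref{IndLiunewformPPSS} is merely a repackaging of the paper's direct Cauchy--Schwarz estimate on the double integral, so no further comparison is needed.
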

\begin{proof}
For simplicity, we restrict ourselves to the case of $I_{\bf\rm PP}$. The other two can be done analogously.
\ben
&&I_{\bf\rm PP}(\mathbf{z})-I^{\delta}_{\bf\rm PP}(\mathbf{z})\cr
&:=&\left|\int_{\mathbb{S}}\phi^{p}_\mathbf{z}(\mathbf{d})\int_{\mathbb{S}}u_{pp}^\infty(\mathbf{\hx,d})\ov{\phi^{p}_\mathbf{z}(\mathbf{\hx})}ds(\mathbf{\hx})ds(\mathbf{d})\right|
   -\left|\int_{\mathbb{S}}\phi^{p}_\mathbf{z}(\mathbf{d})\int_{\mathbb{S}}u_{pp,\delta}^\infty(\mathbf{\hx,d})\ov{\phi^{p}_\mathbf{z}(\mathbf{\hx})}ds(\mathbf{\hx})ds(\mathbf{d})\right|\cr
&\leq& \left|\int_{\mathbb{S}}\phi^{p}_\mathbf{z}(\mathbf{d})\int_{\mathbb{S}}[u_{pp}^\infty(\mathbf{\hx,d})-u_{pp,\delta}^\infty(\mathbf{\hx,d})]\ov{\phi^{p}_\mathbf{z}(\mathbf{\hx})}ds(\mathbf{\hx})ds(\mathbf{d})\right|\cr
&\leq& c||\mathbf{u}^{\infty}-\mathbf{u}_{\delta}^{\infty}||_{\mathcal{L}^2}.
\enn
where we have used the triangle inequality in first inequality and the Cauchy-Schwarz Inequality in the second one.
\end{proof}
\begin{remark}
Theorem \ref{stability} show that our indicator depends continuously on the scattering amplitudes, which makes our indicator extremely stable with respect to noises.
\end{remark}

\subsection{Limited-aperture problems}\label{sec:limited-aperture}

In many cases of practical interest, the far field pattern $\mathbf{u}^{\infty}(\mathbf{\hat{x}, d})$ is restricted to the case when observation direction $\mathbf{\hat{x}}$ and incident direction
$\mathbf{d}$ are on a subset $\mathbb{S}_o$ and $\mathbb{S}_i$, respectively, of the unit sphere $\mathbb{S}$.
One of the key questions of the inverse problems is how to achieve a reasonable reconstruction with these incomplete data.
Clearly, one may directly replace the integral surface $\mathbb{S}\times\mathbb{S}$ in \eqref{Indicatorff}-\eqref{Indicatorss} by $\mathbb{S}_i\times\mathbb{S}_o$.
However, as one could expect, the quality of the reconstructions decreases drastically
for this so called {\em limited-aperture problem}, and actually deteriorates as the aperture decreases.
A typical feature of the {\em limited-aperture} results is that the "shadow region" is highly elongated down range.
Physically, the information available from the "shadow region" is very weak, in particular for high frequency waves.
These phenomenon will also be observed in the subsequent numerical simulations.
Indeed, {\em limited-aperture} data can present a severe challenge for the numerical methods.

In this subsection, we introduce some techniques to retrieve the far field patterns that can not be measured directly. The corresponding results to acoustic scattering
problems can be found in a recent work \cite{LiuSun17}. For simplicity, we assume that $\mathbb{S}_i=\mathbb{S}$.

For convenience, we introduce the notation
\ben
\{\mathbf{u,v}\}_{\pa\Om}:=\int_{\pa\Om}[\mathbf{u}(\mathbf{y})\cdot \mathbb{T}_{\nu(\mathbf{y})}\mathbf{v}(\mathbf{y})-\mathbf{v}(\mathbf{y})\cdot \mathbb{T}_{\nu(\mathbf{y})}\mathbf{u}(\mathbf{y})]ds(\mathbf{y}).
\enn
\begin{theorem}\label{ThmRR}
For all $\mathbf{\hat{x},d}\in \mathbb{S}$, the far field pattern satisfies the reciprocity relations
\be\label{ReciprocityRelations1}
u^{\infty}_{pp}({\mathbf{\hat{x},d}})=u^{\infty}_{pp}({\mathbf{-d, -\hat{x}}}),
\quad u^{\infty}_{ss}({\mathbf{\hat{x},d}})=u^{\infty}_{ss}({\mathbf{-d, -\hat{x}}})
\en
and
\be\label{ReciprocityRelations2}
u^{\infty}_{ps}({\mathbf{\hat{x},d}})= u^{\infty}_{sp}({\mathbf{-d, -\hat{x}}}).
\en
\end{theorem}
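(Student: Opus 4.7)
The plan is to use the boundary-integral representations \eqref{upinfty2}--\eqref{usinfty2} already derived in the paper, together with Betti's second identity, to rewrite each far-field pattern as an antisymmetric bracket on $\partial\Om$. Concretely, I would first observe that \eqref{upinfty2} and \eqref{usinfty2} give
\ben
u^\infty_{pp}(\mathbf{\hat x},\mathbf{d}) = \{\mathbf{u}^{in}_p(\cdot,-\mathbf{\hat x}),\mathbf{u}^{sc}_p(\cdot,\mathbf{d})\}_{\pa\Om},\quad
u^\infty_{ps}(\mathbf{\hat x},\mathbf{d}) = \{\mathbf{u}^{in}_s(\cdot,-\mathbf{\hat x}),\mathbf{u}^{sc}_p(\cdot,\mathbf{d})\}_{\pa\Om},
\enn
and analogously for $u^\infty_{ss}$ and $u^\infty_{sp}$; here $\mathbf{u}^{sc}_p$ (resp.\ $\mathbf{u}^{sc}_s$) denotes the scattered field produced by a purely compressional (resp.\ shear) plane-wave incidence. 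Notice that the bracket $\{\cdot,\cdot\}_{\pa\Om}$ is bilinear and antisymmetric.

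Next I would establish two vanishing lemmas. (i) If $\mathbf{u},\mathbf{v}$ both solve $\Delta^{\ast}\mathbf{w}+\om^2\mathbf{w}=0$ on a neighborhood of $\overline{\Om}$, then Betti's second formula applied in $\Om$ yields $\{\mathbf{u},\mathbf{v}\}_{\pa\Om}=0$; in particular this applies to any pair of incident plane waves. (ii) If $\mathbf{u},\mathbf{v}$ are both radiating solutions in $\R^2\setminus\overline{\Om}$ (i.e.\ satisfy the Kupradze conditions \eqref{KupradzeRC}), then applying Betti on $B_R\setminus\overline{\Om}$ and letting $R\to\infty$ gives $\{\mathbf{u},\mathbf{v}\}_{\pa\Om}=0$; the contribution on $|\mathbf{x}|=R$ is shown to vanish by splitting each field into its $p$- and $s$-parts and invoking the radiation conditions on each part separately. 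This applies to any pair of scattered fields $\mathbf{u}^{sc}_p,\mathbf{u}^{sc}_s$.

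With these two lemmas in hand, write $\mathbf{u}_p=\mathbf{u}^{in}_p+\mathbf{u}^{sc}_p$ and $\mathbf{u}_s=\mathbf{u}^{in}_s+\mathbf{u}^{sc}_s$ for the corresponding total fields. The Dirichlet boundary condition \eqref{Dirichlet} forces
\ben
\{\mathbf{u}_{\alpha}(\cdot,-\mathbf{\hat x}),\mathbf{u}_{\beta}(\cdot,\mathbf{d})\}_{\pa\Om}=0,\qquad \alpha,\beta\in\{p,s\},
\enn
since the integrand vanishes pointwise. Expanding by bilinearity and using (i) to kill the $\{in,in\}$ term and (ii) to kill the $\{sc,sc\}$ term produces the key identity
\ben
\{\mathbf{u}^{in}_{\alpha}(\cdot,-\mathbf{\hat x}),\mathbf{u}^{sc}_{\beta}(\cdot,\mathbf{d})\}_{\pa\Om}=-\{\mathbf{u}^{sc}_{\alpha}(\cdot,-\mathbf{\hat x}),\mathbf{u}^{in}_{\beta}(\cdot,\mathbf{d})\}_{\pa\Om}.
\enn
Applying antisymmetry to the right-hand side and relabeling $\mathbf{d}\leftrightarrow -\mathbf{\hat x}$ (i.e.\ replacing $\mathbf{\hat x}\to -\mathbf{d}$ and $\mathbf{d}\to -\mathbf{\hat x}$ in the representation of $u^\infty_{\beta\alpha}$) recovers exactly \eqref{ReciprocityRelations1} for $\alpha=\beta$ and \eqref{ReciprocityRelations2} for $\alpha\neq\beta$. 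For the Neumann case \eqref{Neumann} the same argument works verbatim because now $\mathbb{T}_\nu\mathbf{u}_\alpha=0$ makes the pointwise integrand vanish.

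The main obstacle I expect is step (ii): verifying that the boundary term on $|\mathbf{x}|=R$ really decays. The elastic surface traction $\mathbb{T}_{\mathbf{\hat x}}$ mixes $p$- and $s$-components, so one must carefully exploit the asymptotics \eqref{usasymptotic}--\eqref{Tusasymptotic} and the orthogonality of the radial and tangential unit vectors $\mathbf{\hat x},\mathbf{\hat x}^{\perp}$ on $\mathbb{S}$ to see that the oscillatory cross-terms $e^{i(k_p\pm k_s)R}$ integrate to a lower-order remainder. Once this cancellation is in hand, the rest of the proof is a clean bilinear expansion.
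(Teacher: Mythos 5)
Your proposal follows essentially the same route as the paper's proof: the representations \eqref{upinfty2}--\eqref{usinfty2}, Betti's identity in $\Om$ for the incident--incident pair, Betti's identity in the exterior combined with the Kupradze radiation conditions and the asymptotics \eqref{usasymptotic}--\eqref{Tusasymptotic} for the scattered--scattered pair, and the boundary condition to kill the total-field bracket. The only difference is presentational: the paper writes out the PP case and declares the remaining cases analogous, while you carry the bilinear expansion uniformly over $\alpha,\beta\in\{p,s\}$, which is correct and changes nothing in the substance of the argument.
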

\begin{proof}
We consider firstly the case when the incident field is a plane compressional wave, i.e.,
$\mathbf{u}^{in}=\mathbf{u}_{p}^{in}({\mathbf{x,d}})=\mathbf{d}e^{ik_{p}\mathbf{x}\cdot \mathbf{d}}$.
We further indicate the dependence of the corresponding scattered field and the total field on the incident direction $\mathbf{d}\in\mathbb{S}$ by writing
$\mathbf{u}^{sc}\mathbf{(x,d)}$ and $\mathbf{u}\mathbf{(x,d)}$, respectively.
From the representation \eqref{upinfty2}, we have
\be\label{uppxd}
u^\infty_{pp}\mathbf{(\hat{x},d)}= \{\mathbf{u}_{p}^{in}(\mathbf{\cdot,-\hat{x}}), \mathbf{u}^{sc}\mathbf{(\cdot,d)}\}_{\pa\Om}
\en
and, interchanging the roles of $\mathbf{\hat{x}}$ and $\mathbf{d}$,
\be\label{uppdx}
u^\infty_{pp}\mathbf{(-d,-\hat{x})}= \{\mathbf{u}_{p}^{in}(\mathbf{\cdot,d}), \mathbf{u}^{sc}\mathbf{(\cdot,-\hat{x})}\}_{\pa\Om}.
\en
Applying the Betti formula to $\mathbf{u}^{in}$ in $\Om$ yields
\be
 \{\mathbf{u}_{p}^{in}(\mathbf{\cdot,d}), \mathbf{u}_{p}^{in}(\mathbf{\cdot,-\hat{x}})\}_{\pa\Om}=0.
\en
Using the Betti formula again to $\mathbf{u}^{sc}$ in the region $\{\mathbf{x}\in\R^2\ba\ov{\Om}: |\mathbf{x}|<R\}$ with $R$ large enough, with the help of
\eqref{usasymptotic} and \eqref{Tusasymptotic}, and finally letting $R$ tend to infinity, we deduce that
\be\label{us=0}
 \{\mathbf{u}^{sc}(\mathbf{\cdot,d}), \mathbf{u}^{sc}(\mathbf{\cdot,-\hat{x}})\}_{\pa\Om}=0.
\en
We now subtract \eqref{uppxd} from the sum of \eqref{uppdx}-\eqref{us=0} to obtain
\ben
u^\infty_{pp}\mathbf{(\hat{x},d)}-u^\infty_{pp}\mathbf{(-d,-\hat{x})}
= \{\mathbf{u}(\mathbf{\cdot,-\hat{x}}), \mathbf{u}(\mathbf{\cdot,d})\}_{\pa\Om}
=0,
\enn
where we have used the boundary conditions \eqref{Dirichlet}-\eqref{Neumann} on $\pa\Om$ in the last equality.
Thus we have showed the first reciprocity relation in \eqref{ReciprocityRelations1}, and the other two in \eqref{ReciprocityRelations1}-\eqref{ReciprocityRelations2}
can be proved analogously.
\end{proof}

The first two reciprocity relations in \eqref{ReciprocityRelations1} is an extension of the well-known ones in acoustic and electromagnetic scattering problems \cite{CK}.
The third one in \eqref{ReciprocityRelations2} is more interesting since it establishes a connection between the compressional far field pattern corresponding to a plane shear wave and the shear far field pattern
corresponding to a plane compressional wave.
We refer to \cite{DassiosKiriakiPolyzos} for the reciprocity relations in 3D.

In $\R^2$, we choose an equidistant set of knots $\theta_i:=(i-1)\pi/m,\,i=1,2,\ldots,2m$ from $[0,2\pi)$.
Assume that we have a set of incident plane waves with incident directions
\ben
\mathbf{d}_i:=(\cos\theta_i,\sin\theta_i),\,i=1,2,\ldots,2m.
\enn
The far field patterns are measured in different observation directions
\ben
\mathbf{\hat{x}}_j:=(\cos\theta_j,\sin\theta_j),\,j=1,2,\ldots,2m.
\enn
In the finite case we define the multi-static response (MSR) matrix $\mathbb{F}_{full}\in \C^{4m\times 4m}$ by
\be\label{MSR}
\mathbb{F}_{full}
:= \left(
     \begin{array}{cc}
       \mathbb{F}_{pp} & \mathbb{F}_{ps} \\
       \mathbb{F}_{sp} & \mathbb{F}_{ss} \\
     \end{array}
   \right)
\en
with
\be\label{MSR2}
\mathbb{F}_{\rm ii}
:= \left(
    \begin{array}{cccc}
      u^{1,1}_{\rm ii}\quad u^{1,2}_{\rm ii}\quad \cdots\quad u^{1,2m}_{\rm ii} \\
      u^{2,1}_{\rm ii}\quad u^{2,2}_{\rm ii}\quad \cdots\quad u^{2,2m}_{\rm ii} \\
      \vdots\,\qquad \vdots\,\quad \ddots\,\qquad \vdots \\
      u^{2m,1}_{\rm ii}\,\, u^{2m,2}_{\rm ii}\,\, \cdots\quad u^{2m,2m}_{\rm ii} \\
      \end{array}
  \right), \quad {\rm ii}=pp,\, ps,\, sp,\, ss,
\en
where $u_{\rm ii}^{i,j}=u^\infty_{\rm ii}(\mathbf{\hat{x}}_j, \mathbf{d}_i)$ for $1\leq i, j\leq 2m$ corresponding to $2m$ observation directions $\mathbf{\hat{x}}_j$
and $2m$ incident directions $\mathbf{d}_i$.
The MSR matrix $\mathbb{F}_{full}$ given in \eqref{MSR} is regarded as the scattering amplitude in {\em full-aperture}.
We can partition both the rows and columns of the $2m$-by-$2m$ MSR submatrix $\mathbb{F}_{\rm ii}$ to obtain a $2$-by-$2$ block matrix
\be\label{Fpartition}
\mathbb{F}_{\rm ii}=\left(
             \begin{array}{cc}
               \mathbb{F}^{11}_{\rm ii} & \mathbb{F}^{12}_{\rm ii} \\
               \mathbb{F}^{21}_{\rm ii} & \mathbb{F}^{22}_{\rm ii} \\
             \end{array}
           \right), \quad {\rm ii}=pp,\, ps,\, sp,\, ss.
\en
We can now use the reciprocity Theorem \ref{ThmRR} to deduce the symmetric structure of the MSR matrix $\mathbb{F}_{full}$.
\begin{corollary}\label{blocksymmetric}
For the block matrix defined in \eqref{Fpartition}, we have
\begin{itemize}
  \item $\mathbb{F}^{11}_{\rm ii}=(\mathbb{F}^{22}_{\rm ii})^{T},\quad \mathbb{F}^{12}_{\rm ii}=(\mathbb{F}^{12}_{\rm ii})^{T}$\quad
        and\quad $\mathbb{F}^{21}_{\rm ii}=(\mathbb{F}^{21}_{\rm ii})^{T}$,\quad {\rm ii}=pp,\,ss;
  \item $\mathbb{F}^{11}_{ps}=(\mathbb{F}^{22}_{sp})^{T}$,\quad $\mathbb{F}^{22}_{ps}=(\mathbb{F}^{11}_{sp})^{T}$,\quad
       $\mathbb{F}^{12}_{ps}=(\mathbb{F}^{12}_{sp})^{T}$\quad and \quad $\mathbb{F}^{21}_{ps}=(\mathbb{F}^{21}_{sp})^{T}$.
\end{itemize}
\end{corollary}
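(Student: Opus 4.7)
The proof rests on a simple arithmetic fact about the chosen discretization together with the three reciprocity identities from Theorem \ref{ThmRR}. Because the $2m$ knots are equidistributed with spacing $\pi/m$, one has $\theta_{i+m}=\theta_i+\pi$ for $i=1,\ldots,m$, hence the antipodal relations
\[
\mathbf{d}_{i+m}=-\mathbf{d}_i,\qquad \mathbf{\hat{x}}_{j+m}=-\mathbf{\hat{x}}_j,\qquad i,j=1,\ldots,m.
\]
Moreover, as vectors in $\R^2$, $\mathbf{d}_i$ and $\mathbf{\hat{x}}_i$ are the same unit vector on $\mathbb{S}$ (only their role differs), so whenever convenient one may freely substitute one for the other inside a far field pattern.

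For the first bullet (case ${\rm ii}=pp$; the case ${\rm ii}=ss$ is identical), I would verify each of the three identities entry by entry. Specifically, the $(i,j)$-entry of $\mathbb{F}^{22}_{pp}$ ($1\le i,j\le m$) is $u^\infty_{pp}(\mathbf{\hat{x}}_{j+m},\mathbf{d}_{i+m})=u^\infty_{pp}(-\mathbf{\hat{x}}_j,-\mathbf{d}_i)$, and the first reciprocity relation in \eqref{ReciprocityRelations1} turns this into $u^\infty_{pp}(\mathbf{d}_i,\mathbf{\hat{x}}_j)$, which (after identifying $\mathbf{d}_i=\mathbf{\hat{x}}_i$ and $\mathbf{\hat{x}}_j=\mathbf{d}_j$ as vectors) equals $[\mathbb{F}^{11}_{pp}]_{j,i}$, establishing $\mathbb{F}^{11}_{pp}=(\mathbb{F}^{22}_{pp})^{T}$. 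The symmetry of $\mathbb{F}^{12}_{pp}$ is obtained by writing $[\mathbb{F}^{12}_{pp}]_{i,j}=u^\infty_{pp}(-\mathbf{\hat{x}}_j,\mathbf{d}_i)$, applying reciprocity to get $u^\infty_{pp}(-\mathbf{d}_i,\mathbf{d}_j)$, and applying it once more to get $u^\infty_{pp}(-\mathbf{d}_j,\mathbf{d}_i)=[\mathbb{F}^{12}_{pp}]_{j,i}$. The argument for $\mathbb{F}^{21}_{pp}$ is the mirror image, starting from $[\mathbb{F}^{21}_{pp}]_{i,j}=u^\infty_{pp}(\mathbf{\hat{x}}_j,-\mathbf{d}_i)$.

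For the second bullet, the same procedure works but with the mixed reciprocity \eqref{ReciprocityRelations2}. For instance, $[\mathbb{F}^{11}_{ps}]_{i,j}=u^\infty_{ps}(\mathbf{\hat{x}}_j,\mathbf{d}_i)=u^\infty_{sp}(-\mathbf{d}_i,-\mathbf{\hat{x}}_j)$, and using $\mathbf{d}_i=\mathbf{\hat{x}}_i$, $\mathbf{\hat{x}}_j=\mathbf{d}_j$ this equals $u^\infty_{sp}(-\mathbf{\hat{x}}_i,-\mathbf{d}_j)=[\mathbb{F}^{22}_{sp}]_{j,i}$, which is exactly $[(\mathbb{F}^{22}_{sp})^{T}]_{i,j}$. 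The relation $\mathbb{F}^{22}_{ps}=(\mathbb{F}^{11}_{sp})^{T}$ is the same identity with $i,j$ replaced by $i+m,j+m$, and the two cross-relations $\mathbb{F}^{12}_{ps}=(\mathbb{F}^{12}_{sp})^{T}$ and $\mathbb{F}^{21}_{ps}=(\mathbb{F}^{21}_{sp})^{T}$ follow by the same one-line application of \eqref{ReciprocityRelations2} after substituting the antipodal index.

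There is no real obstacle here; the only thing one must be careful about is the bookkeeping of which arguments are observation directions and which are incident directions, since reciprocity swaps them (with a sign). Writing each submatrix entry in the canonical form $u^\infty_{\rm ii}(\mathbf{\hat{x}},\mathbf{d})$ and then applying Theorem \ref{ThmRR} mechanically makes every identity a one-step computation, so the proposed proof is essentially a short calculation presented in a compact list.
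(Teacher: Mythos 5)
Your argument is correct and coincides with what the paper intends: Corollary \ref{blocksymmetric} is presented there as an immediate consequence of Theorem \ref{ThmRR} combined with the antipodal structure $\mathbf{d}_{i+m}=-\mathbf{d}_i$, $\hat{\mathbf{x}}_{j+m}=-\hat{\mathbf{x}}_j$ of the equidistant grid and the identification of $\mathbf{d}_i$ with $\hat{\mathbf{x}}_i$ as vectors, which is exactly the bookkeeping you carry out entrywise. One harmless slip: in the $\mathbb{F}^{12}_{pp}$ step the intermediate quantity $u^\infty_{pp}(-\mathbf{d}_i,\mathbf{d}_j)$ is already $[\mathbb{F}^{12}_{pp}]_{j,i}$ (observation direction $-\mathbf{d}_i=\hat{\mathbf{x}}_{i+m}$, incident direction $\mathbf{d}_j$), whereas your second application of reciprocity produces $u^\infty_{pp}(-\mathbf{d}_j,\mathbf{d}_i)=[\mathbb{F}^{12}_{pp}]_{i,j}$ again, so the symmetry follows from a single application and the final label in that sentence should be attached to the intermediate term rather than the last one.
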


Using Corollary \ref{blocksymmetric}, the far field patterns that can not be measured directly could be retrieved immediately from the symmetric
structure of the MSR matrix $\mathbb{F}_{full}$.
Let $B$ be a ball large enough such that $\ov{\Om}\subset B$ and $k^2$ is not a Dirichlet eigenvalue for $-\Delta$ in $B$. Define the integral operator
$S^{\infty}_k: L^2(\pa B)\rightarrow L^2(\mathbb{S})$ by
\be\label{Sinfty}
(S^{\infty}_{k}\phi)(\mathbf{\hat{x}}):=\int_{\pa B}e^{-ik\mathbf{\hat{x}\cdot y}}\phi(\mathbf{y})ds(\mathbf{y}),\quad \mathbf{\hat{x}}\in \mathbb{S}.
\en
Then one can show that the operator $S^{\infty}_k$ is injective and has dense range, see e.g., Theorem 5.19 in \cite{CK}.
To retrieve {\em full-aperture} data, one may firstly solve the equations
\be\label{Sinftyequations}
S^{\infty}_{k_p}\phi=u_p^{\infty},\quad\mbox{and}\quad S^{\infty}_{k_s}\psi=u_s^{\infty}
\en
by the Tikhonov regularization in $L^2(\mathbb{S}_0)$ and then insert the solutions $\phi$ and $\psi$
into $u_p^{\infty}(\mathbf{\hat{x}}):=(S_{k_p}^{\infty}\phi)(\mathbf{\hat{x}})$ and $u_s^{\infty}(\mathbf{\hat{x}}):=(S_{k_s}^{\infty}\psi)(\mathbf{\hat{x}})$, respectively for
$\mathbf{\hat{x}}\in \mathbb{S}$ to obtain the missing data. Unfortunately, the integral operator $S_k^{\infty}$ has an analytic kernel and therefore the equations
\eqref{Sinftyequations} are severely ill-posed. Following ideas from \cite{LiuSun17}, one may combine this with the special symmetric structure given in Corollary \ref{blocksymmetric}
to retrieve the data that cannot be measured directly.
With the reconstructed full-aperture data, using the indicators given in \eqref{Indicatorff}-\eqref{Indicatorss}, it is desirable to partially solve the limited-aperture problems.

\section{Numerical experiments}
Now we turn to present a variety of numerical examples in two dimensions to illustrate the efficiency and effectiveness of our sampling methods.
There are totally six groups of numerical tests to be considered, and they are
respectively referred to as {\bf Dirichlet, Neumann, Multiple, MultiScalar, ResolutionLimit}  and {\bf Limited-aperture}.

The boundaries of the scatterers used in our numerical experiments are parameterized as follows.
\begin{small}
\begin{eqnarray}
\label{circle}&\mbox{\rm Circle:}&\quad \mathbf{x}(t)\ =(a,b)+\rho\ (\cos t, \sin t),\quad 0\leq t\leq2\pi,\\
\label{peanut}&\mbox{\rm Peanut:}&\quad \mathbf{x}(t)\ =(a,b)+\rho\ \sqrt{3\cos^2 t+1}(\cos t, \sin t),\quad 0\leq t\leq2\pi,\\
\label{pear}&\mbox{\rm Pear:}&\quad \mathbf{x}(t)\ =(a,b)+\rho(2+0.3\cos 3t)\ (\cos t, \sin t),\quad 0\leq t\leq2\pi,\\
\label{kite}&\mbox{\rm Kite:}&\quad \mathbf{x}(t)\ =(a,b)+\rho\ (\cos t+0.65\cos 2t-0.65, 1.5\sin t),\quad 0\leq t\leq2\pi,
\end{eqnarray}
\end{small}
with $(a,b)$ and $\rho$ being the location and size, respectively, of the scatterer which may be different in different examples.
For simplicity, we seek the scattered field in the form
\ben
\mathbf{u}^{sc}(\mathbf{x})=\int_{\pa\Om}\Phi(\mathbf{x,y})\bbpsi(\mathbf{y})ds(\mathbf{y}),\quad \mathbf{x}\in\R^2\ba\ov{\Om},
\enn
where the density $\bbpsi$ is to be determined by using Nystr\"{o}m method with the boundary conditions \eqref{Dirichlet}-\eqref{Neumann}.
Then the corresponding far field patterns are given by
\ben
u^{\infty}_{p}(\mathbf{\hx})&=&\int_{\pa\Om}e^{-ik_p\mathbf{\hx\cdot y}}\bbpsi(\mathbf{y})\cdot\mathbf{\hx}ds(\mathbf{y}),\quad \mathbf{\hx}\in\mathbb{S},\\
u^{\infty}_{s}(\mathbf{\hx})&=&\int_{\pa\Om}e^{-ik_s\mathbf{\hx\cdot y}}\bbpsi(\mathbf{y})\cdot\mathbf{\hx}^{\perp}ds(\mathbf{y}),\quad \mathbf{\hx}\in\mathbb{S},
\enn
which can be evaluated by the trapezoidal rule after solving the integral equation for $\bbpsi$.
We refer to the \cite{BaoXuYin, HsiaoWendland, YinHsiaoXu} and the references therein
for other numerical methods for the exterior elastic scattering problems.

In our simulations, we compute the far field patterns
$\mathbf{u}^\infty(\theta_j, \theta_l)$  with $\theta_j = 2\pi j/N$,
for $N$ equidistantly distributed incident directions and observation directions.
These data are then stored in the matrix $F_\Om \in \C^{N \times N}$.
We further perturb
$F_\Om$ by random noise using
\ben
F_{\Om}^{\delta}\ =\ F_{\Om} +\delta\|F_{\Om}\|\frac{R_1+R_2 i}{\|R_1+R_2 i\|},
\enn
where $R_1$ and $R_2$ are two $N \times N$ matrixes containing pseudo-random values
drawn from a normal distribution with mean zero and standard deviation one.
The value of $\delta$ used in our code is $\delta:=\|F_{\Om}^{\delta} -F_{\Om}\|/\|F_{\Om}\|$ and so presents the relative error.

In the following experiments, we take $N=512$, the relative error level $\delta = 10\%, 30\%$, and the Lam$\acute{\mbox{e}}$ constants $\lambda=1$ and $\mu=1$.
If not stated otherwise, we take the circular frequency $\om=8\pi$.

\subsection{Example Dirichlet}\label{sec:single_Dirichlet}
This example is designed to check the validity of our methods for scatterers with different shapes.
For simplicity, we impose Dirichlet boundary condition on the
underlying scatterers. The Neumann case will be considered in the next example.
The underlying scatterer is chosen to be a kite or a pear shaped domain with $(a,b)=(0,0)$ and $\rho=1$.
The research domain is chosen as $[-6,6]\times[-6,6]$ with $321\times 321$ equally spaced sampling points.
Figure \ref{fig:Dirichlet_noise30} shows the corresponding results.
Clearly, different to the classical sampling methods, all the proposed indicators decay like the Bessel functions as the sampling points away from the boundary.
All the results show that our methods are very robust to noise. We observe that, even up to $30\%$ noise is added, both the location and shape of the underlying scatterer
can be well reconstructed.
This verifies our theory  analyses and can also be observed in the subsequent examples.\\

\begin{figure}
\centering
\subfigure[\textbf{True domain}]{\includegraphics[width=0.24\textwidth]{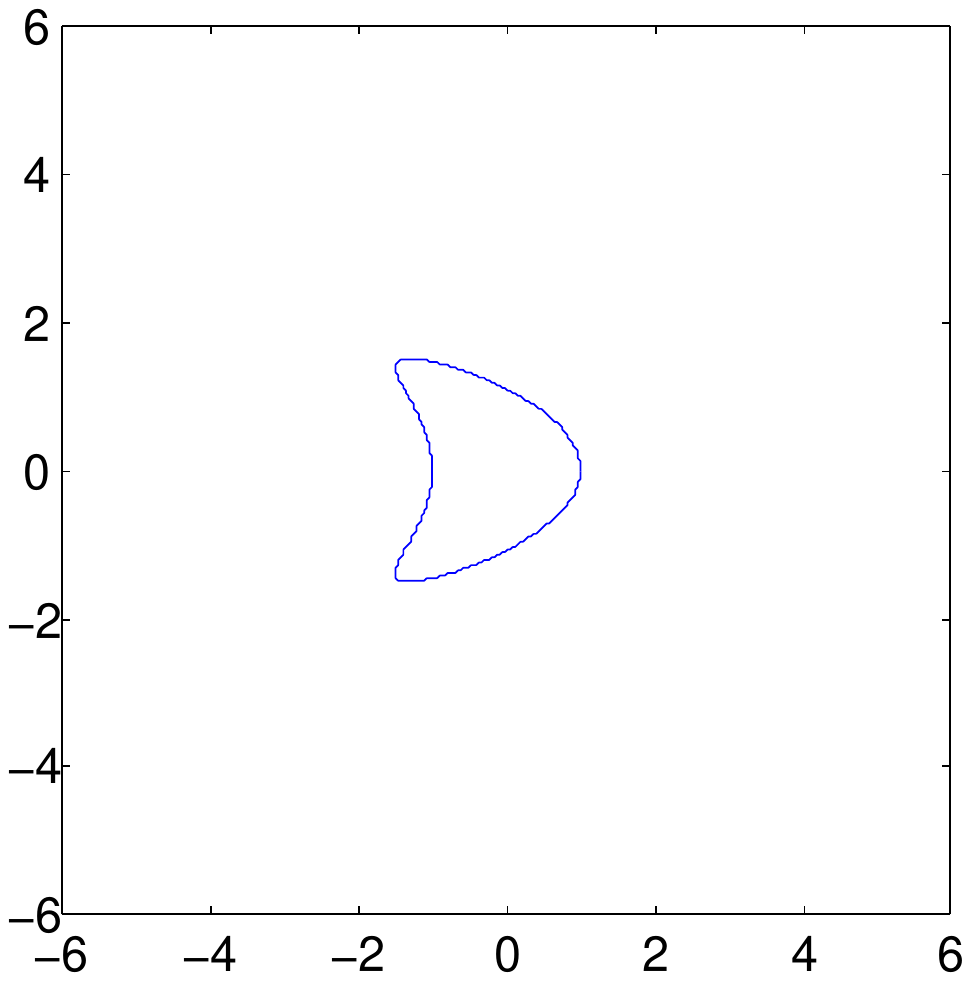}}
\subfigure[\textbf{$I_{\rm SS}^2$}]{\includegraphics[width=0.24\textwidth]{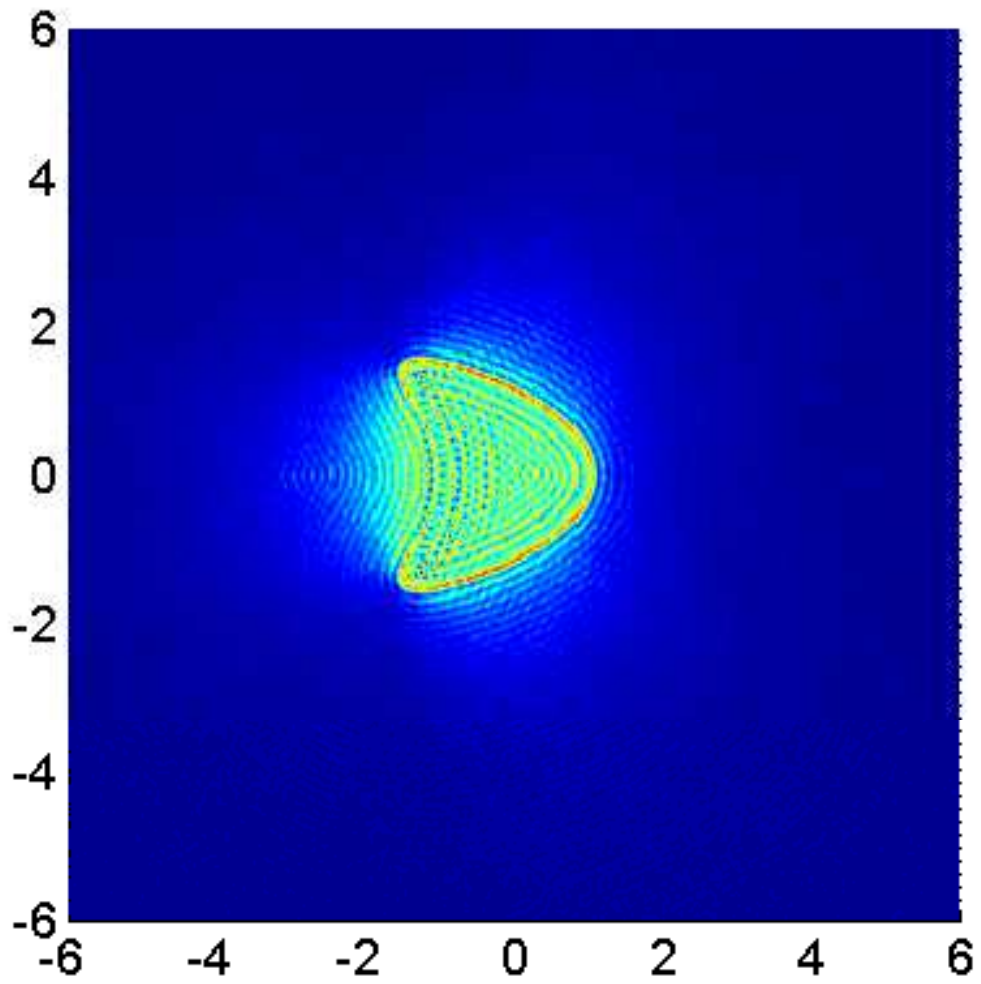}}
\subfigure[\textbf{$I_{\rm PP}^2$}]{\includegraphics[width=0.24\textwidth]{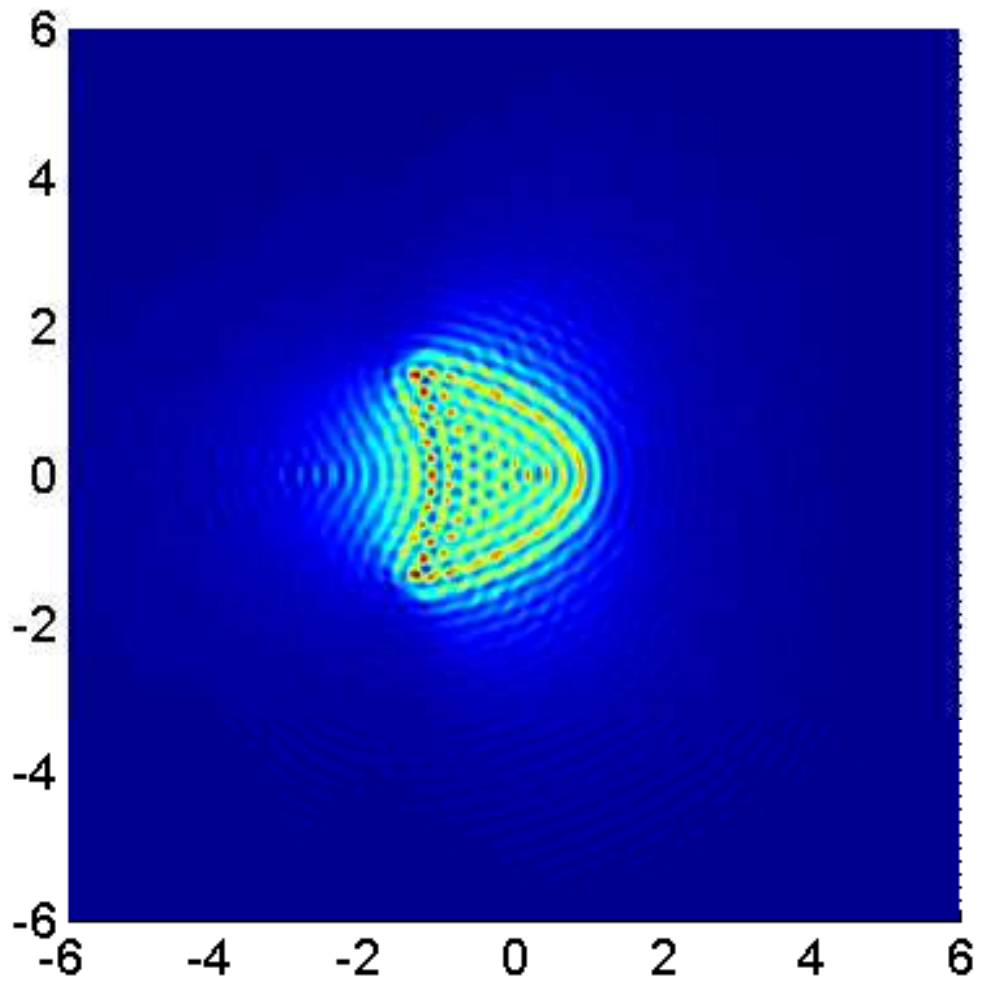}}
\subfigure[\textbf{$I_{\rm FF}^2$}]{\includegraphics[width=0.24\textwidth]{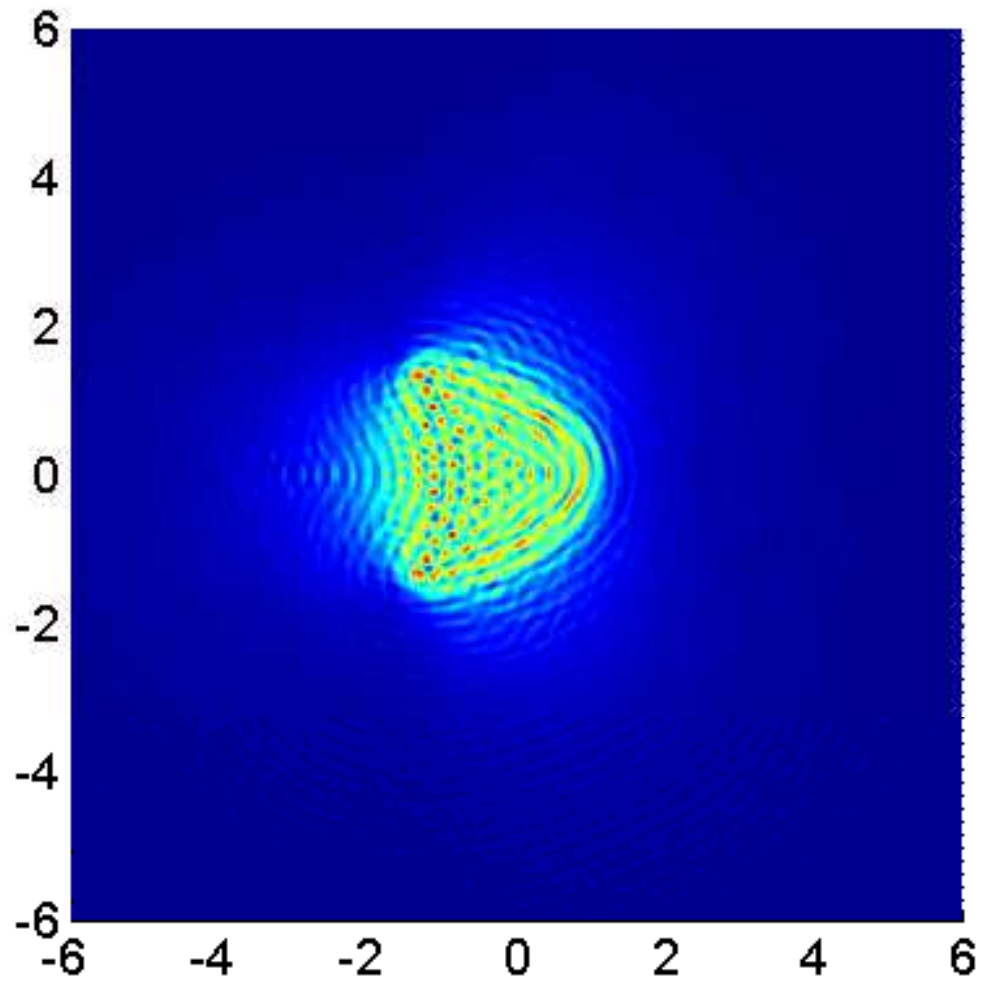}}
\subfigure[\textbf{True domain}]{\includegraphics[width=0.24\textwidth]{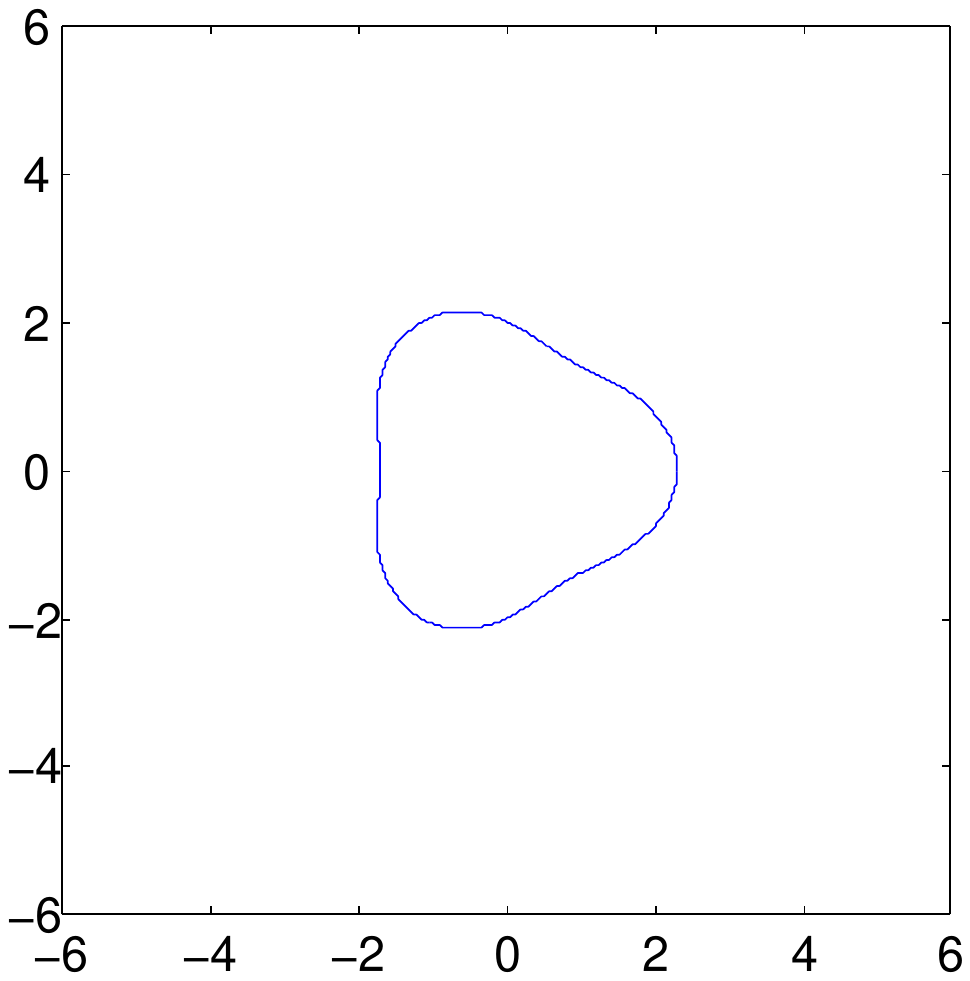}}
\subfigure[\textbf{$I_{\rm SS}^2$}]{\includegraphics[width=0.24\textwidth]{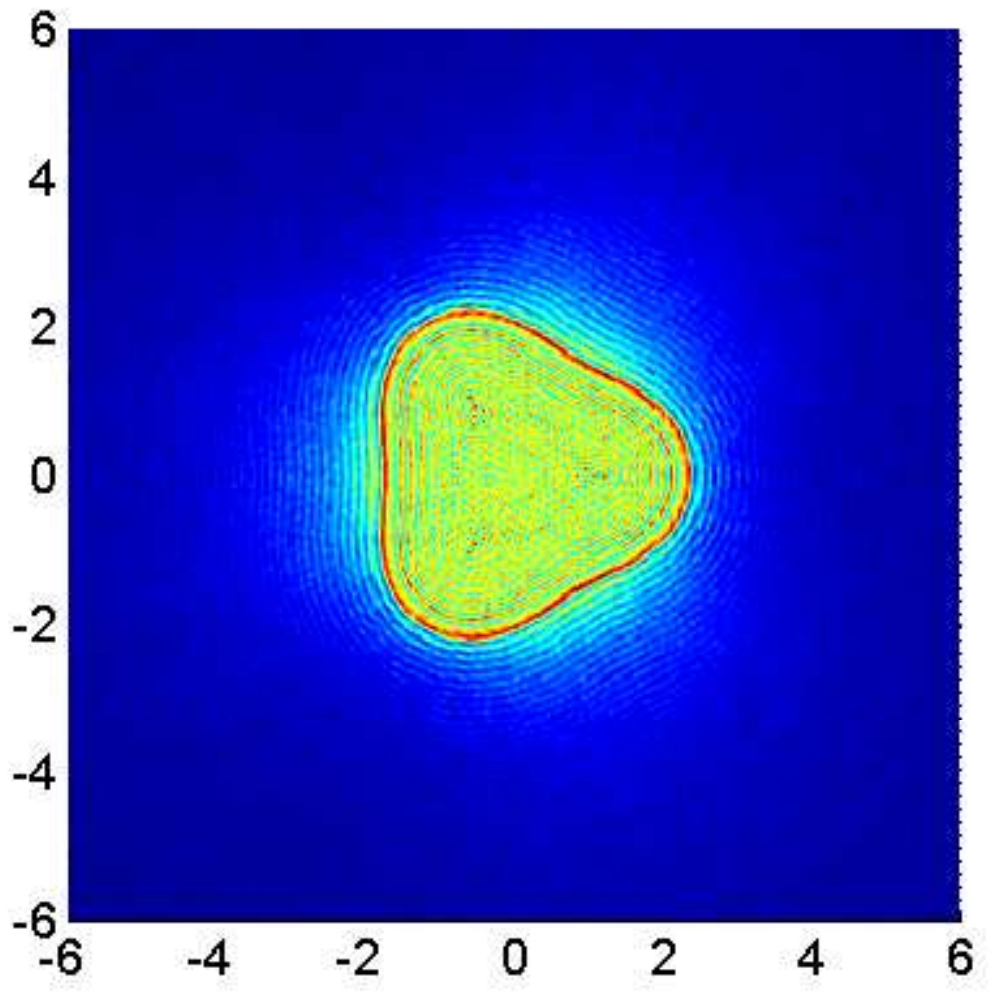}}
\subfigure[\textbf{$I_{\rm PP}^2$}]{\includegraphics[width=0.24\textwidth]{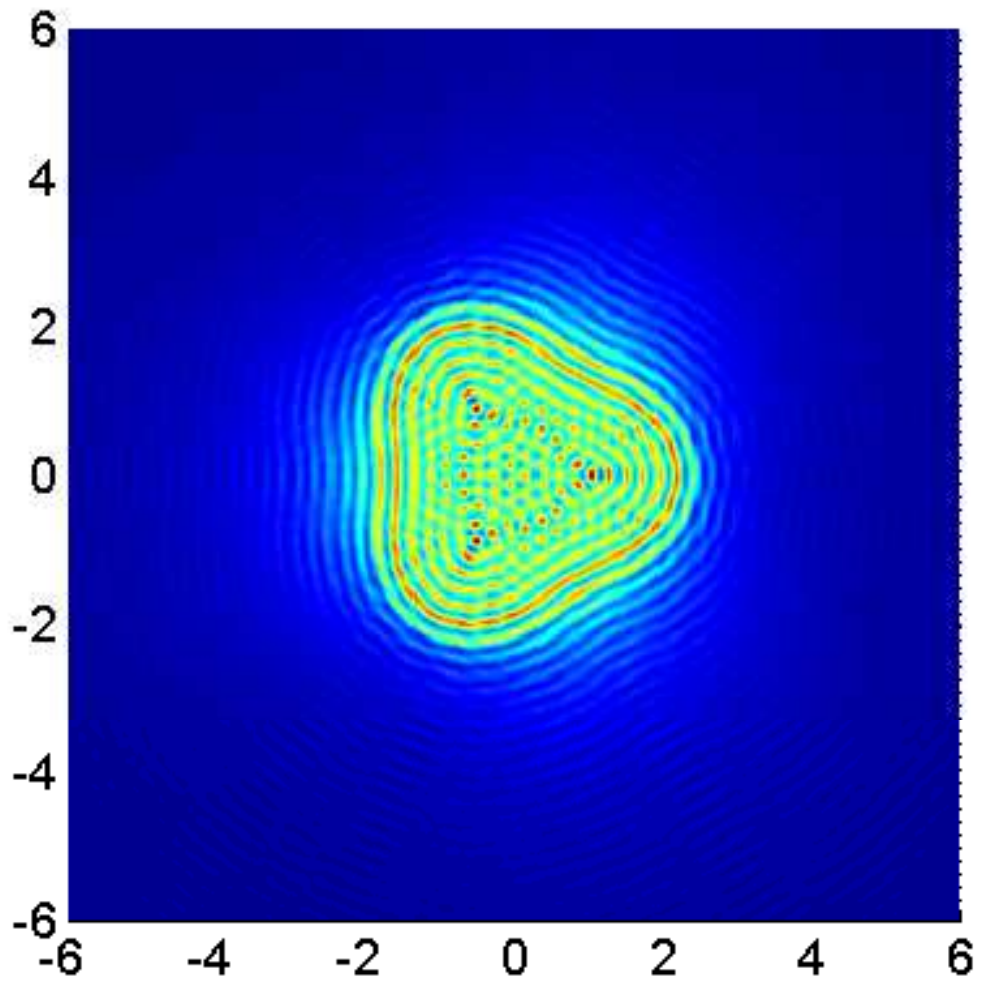}}
\subfigure[\textbf{$I_{\rm FF}^2$}]{\includegraphics[width=0.24\textwidth]{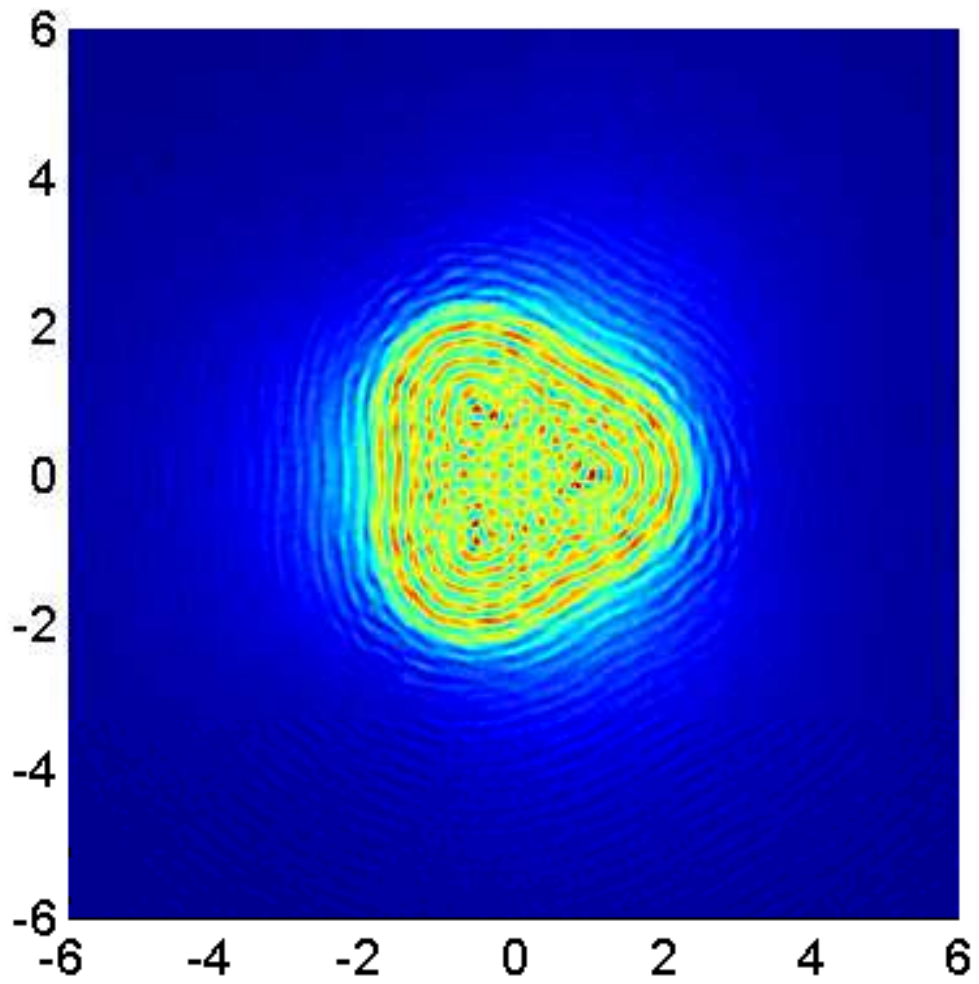}}
\caption{{\bf Example Dirichlet.} Reconstructions of a rigid body with $30\%$ noise. Top row: kite shaped domain. Bottom row: pear shaped domain.}
\label{fig:Dirichlet_noise30}
\end{figure}

\subsection{Example Neumann}

For comparison, we consider the scattering by cavities, i.e., the Neumann boundary condition is imposed. We take the same domains as the previous example.
The reconstructions are shown in Figure \ref{fig:Neumann_noise30}.
From this, we observe that the locations and shapes are still be well captured.
This further shows that our methods are independent of the physical properties of the underlying scatterer.\\

\begin{figure}[htbp]
\centering
\subfigure[\textbf{True domain}]{\includegraphics[width=0.24\textwidth]{kite.pdf}}
\subfigure[\textbf{$I_{\rm SS}^2$}]{\includegraphics[width=0.24\textwidth]{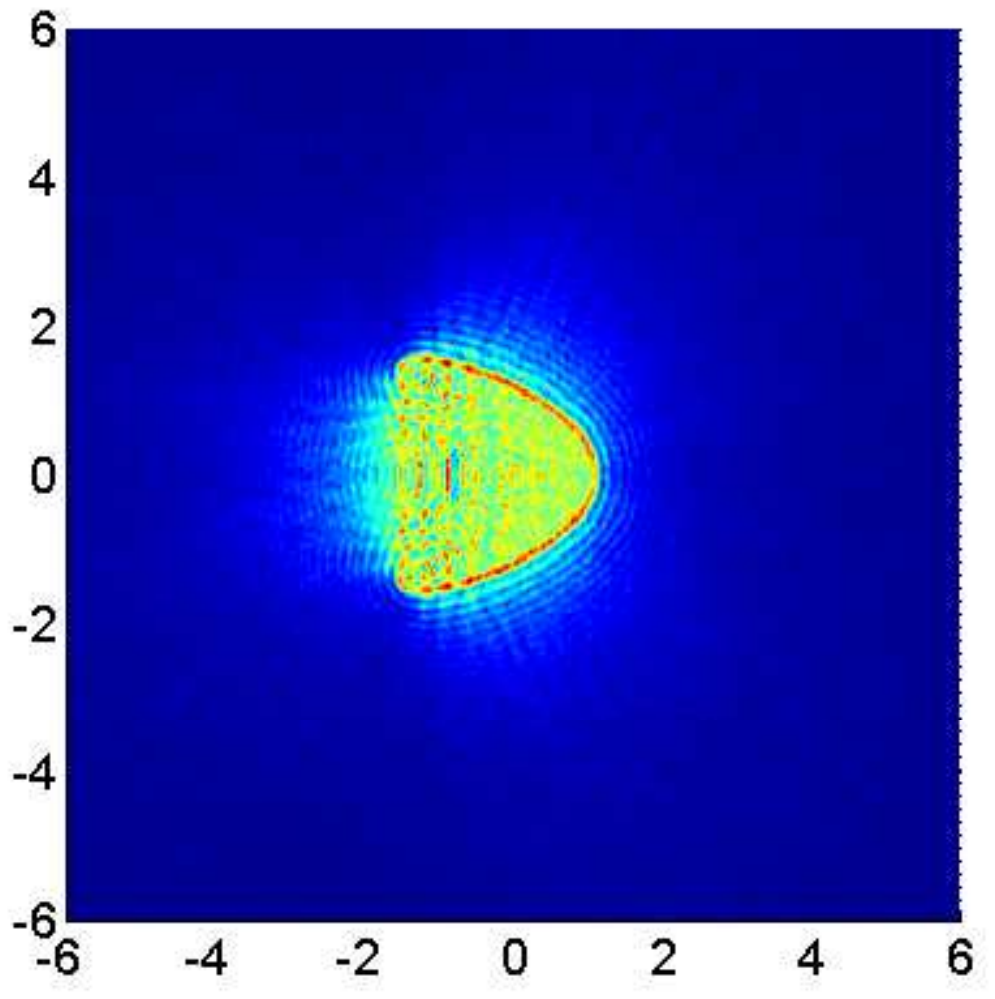}}
\subfigure[\textbf{$I_{\rm PP}^2$}]{\includegraphics[width=0.24\textwidth]{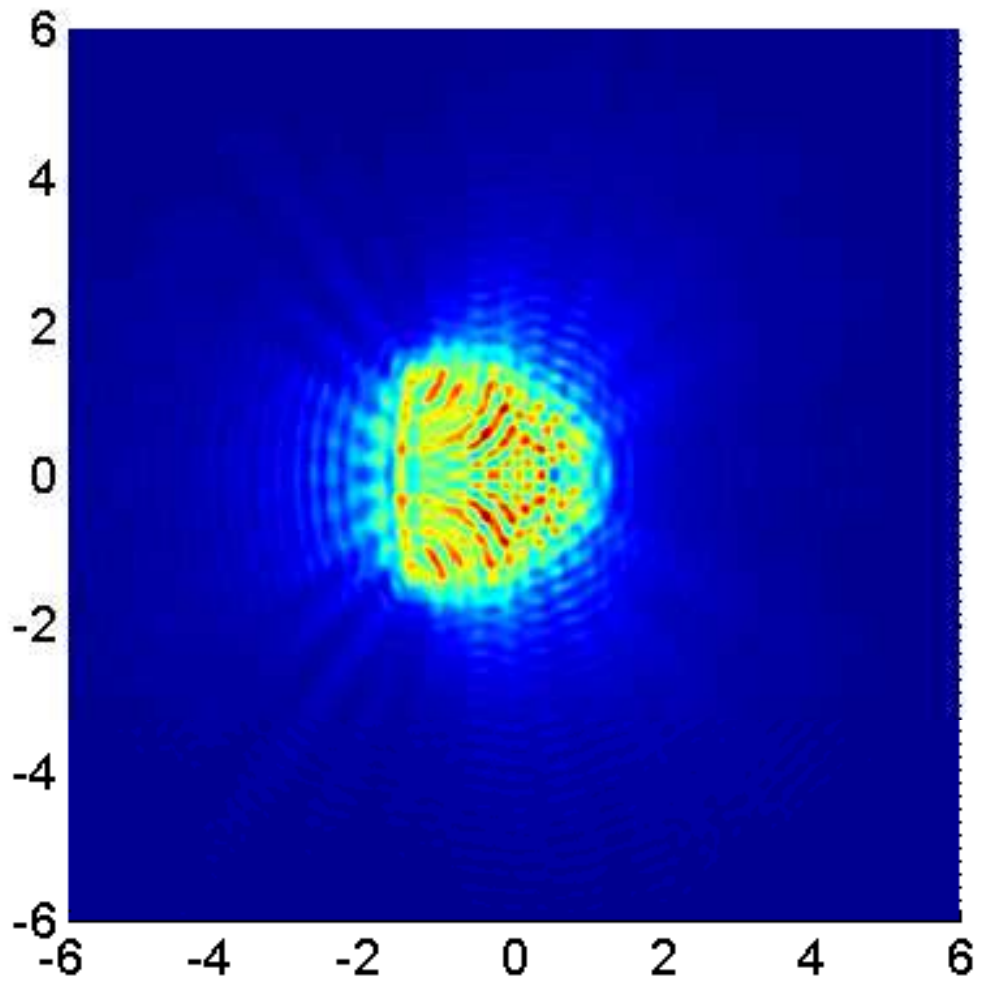}}
\subfigure[\textbf{$I_{\rm FF}^2$}]{\includegraphics[width=0.24\textwidth]{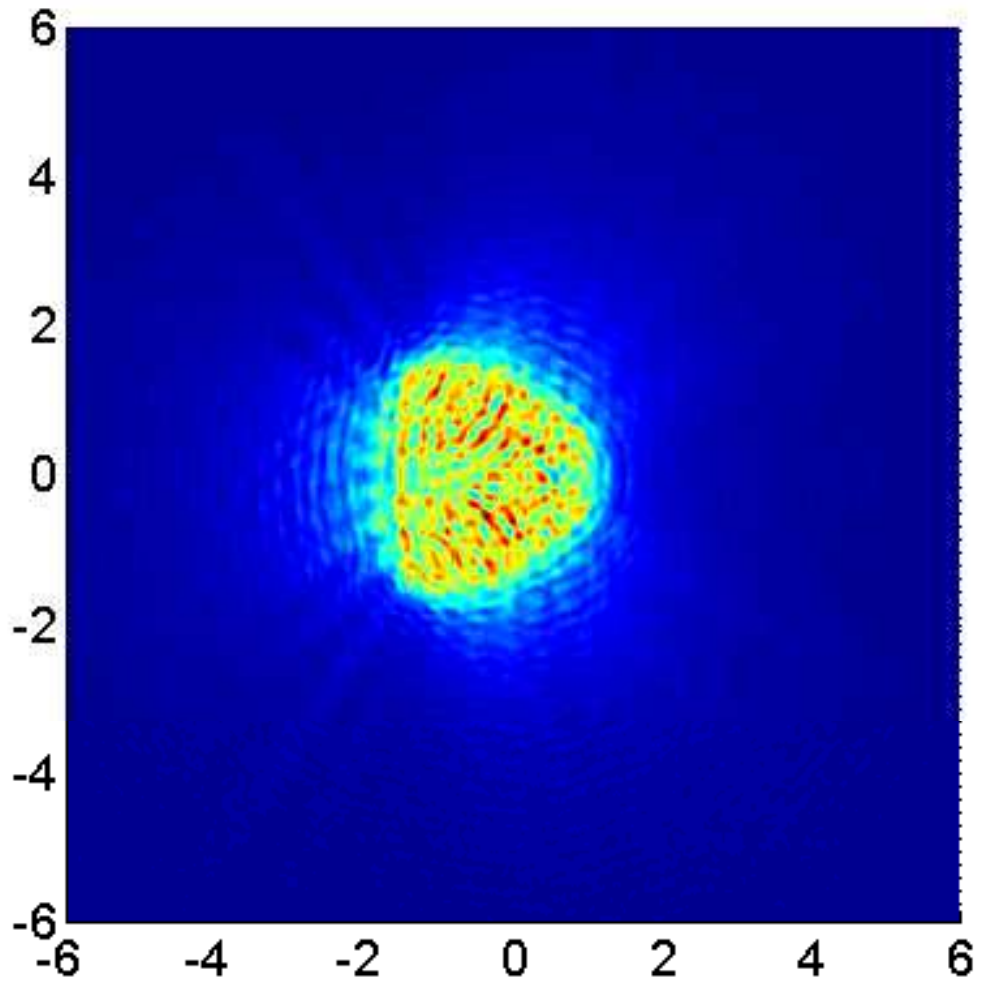}}
\subfigure[\textbf{True domain}]{\includegraphics[width=0.24\textwidth]{pear.pdf}}
\subfigure[\textbf{$I_{\rm SS}^2$}]{\includegraphics[width=0.24\textwidth]{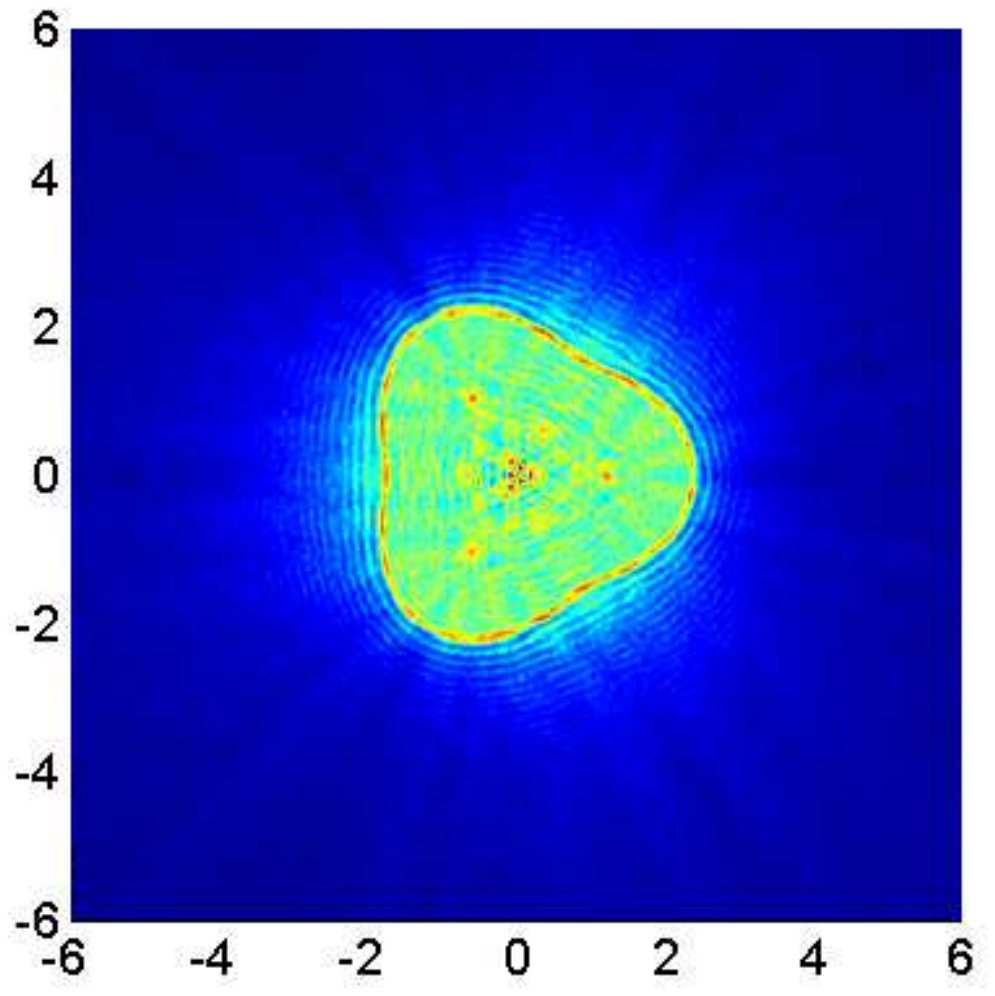}}
\subfigure[\textbf{$I_{\rm PP}^2$}]{\includegraphics[width=0.24\textwidth]{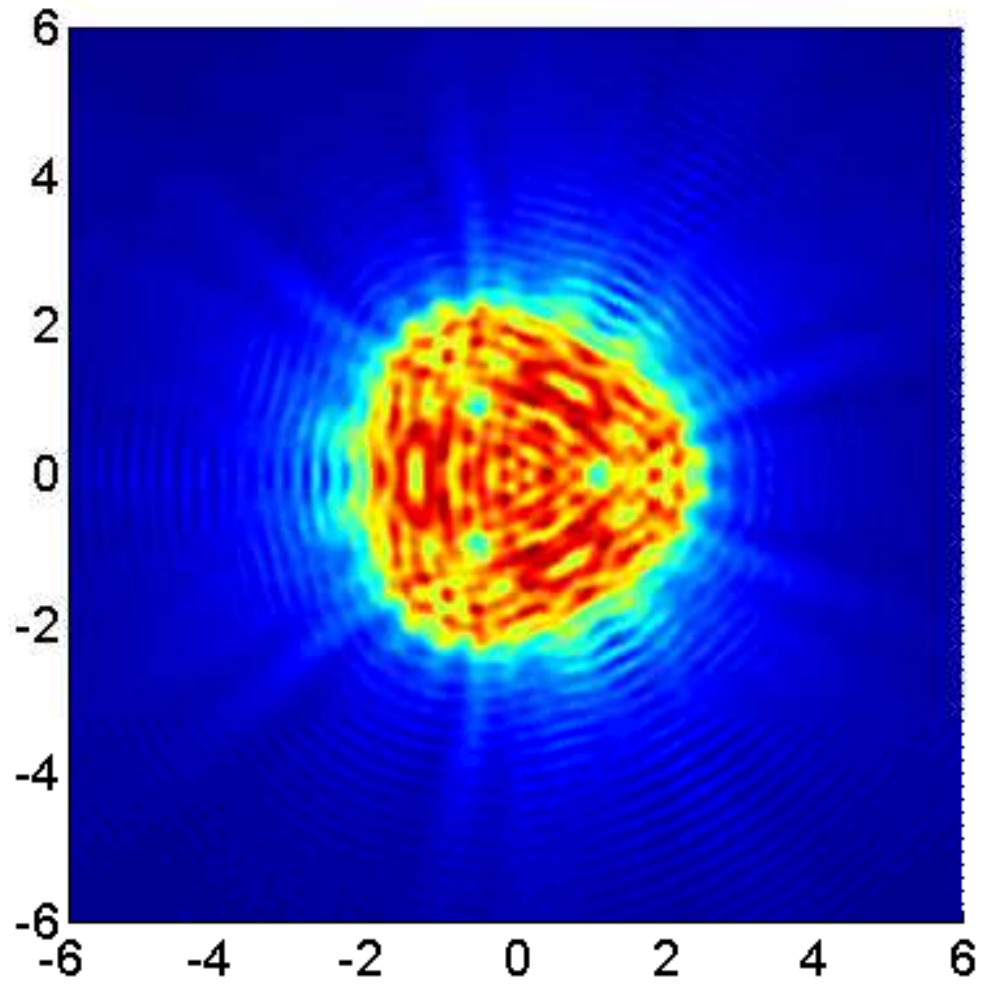}}
\subfigure[\textbf{$I_{\rm FF}^2$}]{\includegraphics[width=0.24\textwidth]{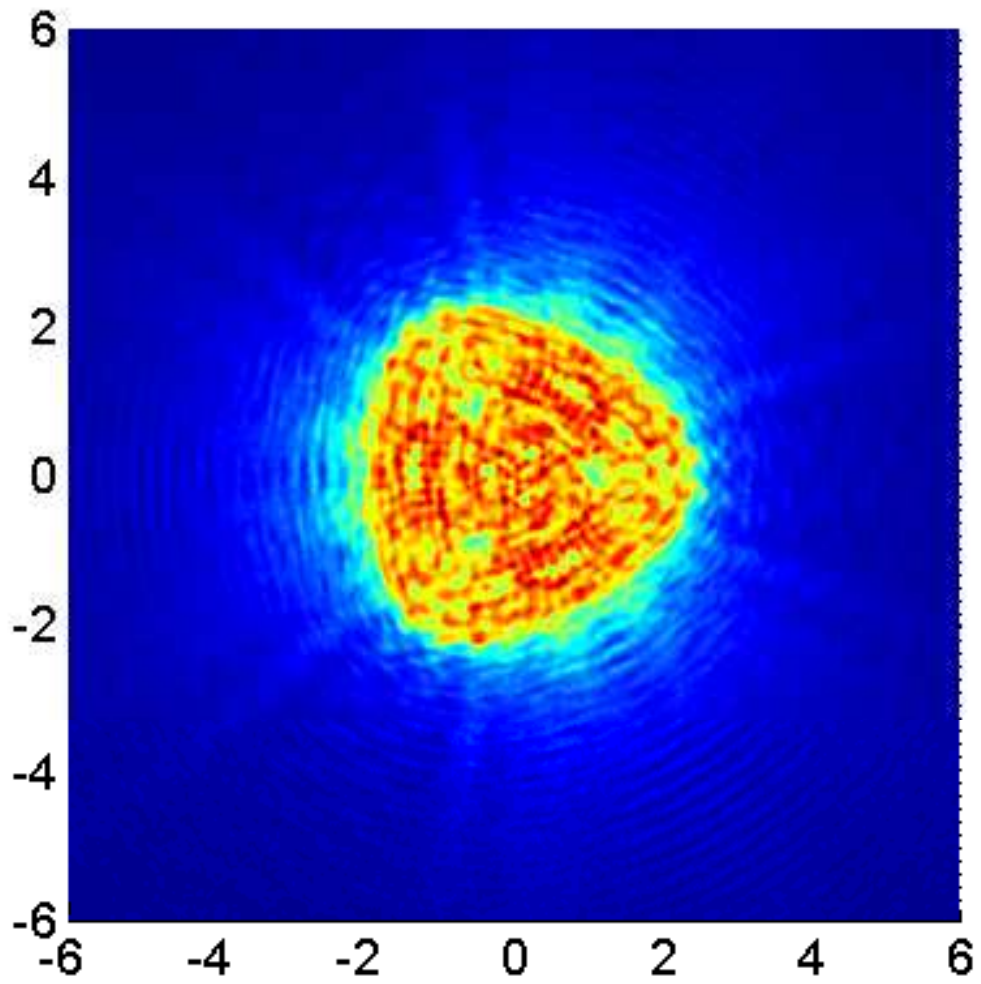}}
\caption{{\bf Example Neumann.} Reconstructions of a cavity with $30\%$ noise. Top row: kite shaped domain. Bottom row: pear shaped domain.}
\label{fig:Neumann_noise30}
\end{figure}

\subsection{Example Multiple}

We consider the scattering by a scatterer with two disjoint components. We set the scatterer to be a combination of a peanut shaped domain centered at
$(3,-3)$ and a kite shaped domain centered at $(-3,3)$.
We impose Dirichlet boundary condition on the boundary $\pa \Om$.
The search domain is the rectangle $[-6,6]\times[-6,6]$ with $641\times 641$ equally spaced sampling points.
The reconstructions are shown in Figure \ref{fig:kite_peanut_noise30}.\\

\begin{figure}
\subfigure[\textbf{True domain}]{\includegraphics[width=0.24\textwidth]{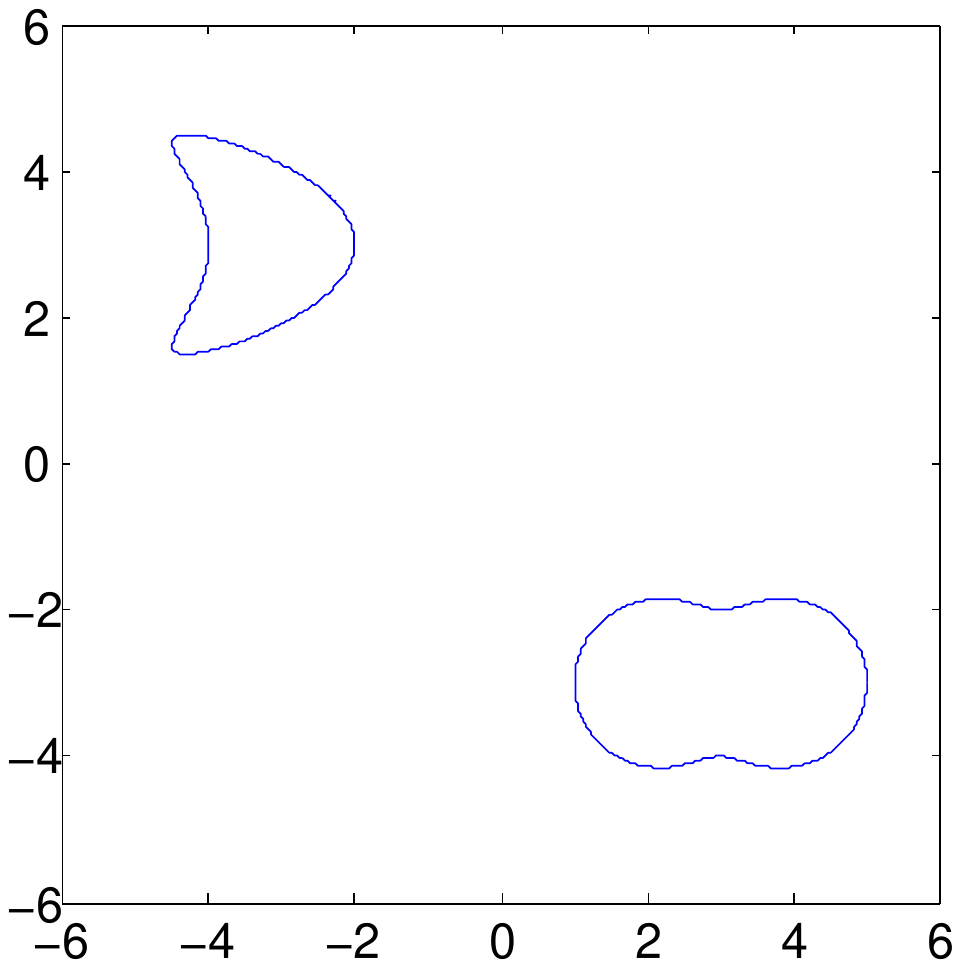}}
\subfigure[\textbf{$I_{\rm SS}^2$}]{\includegraphics[width=0.24\textwidth]{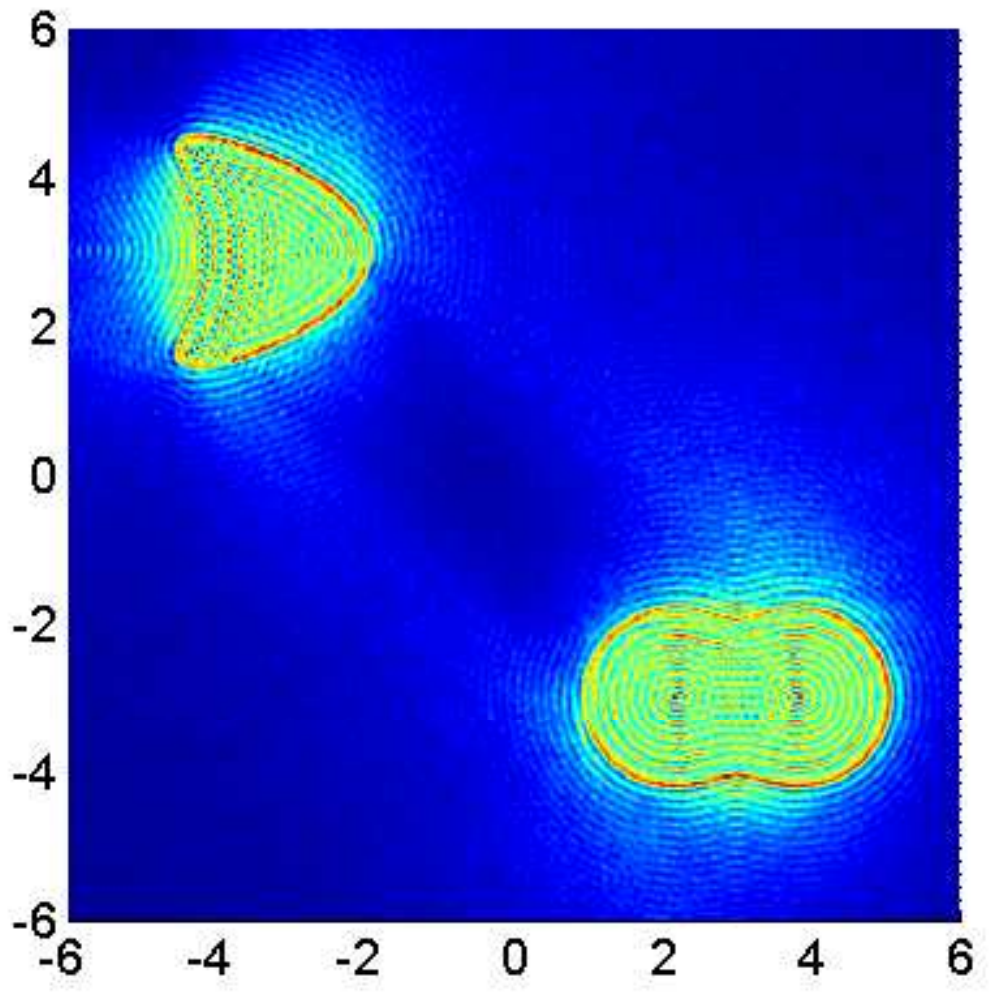}}
\subfigure[\textbf{$I_{\rm PP}^2$}]{\includegraphics[width=0.24\textwidth]{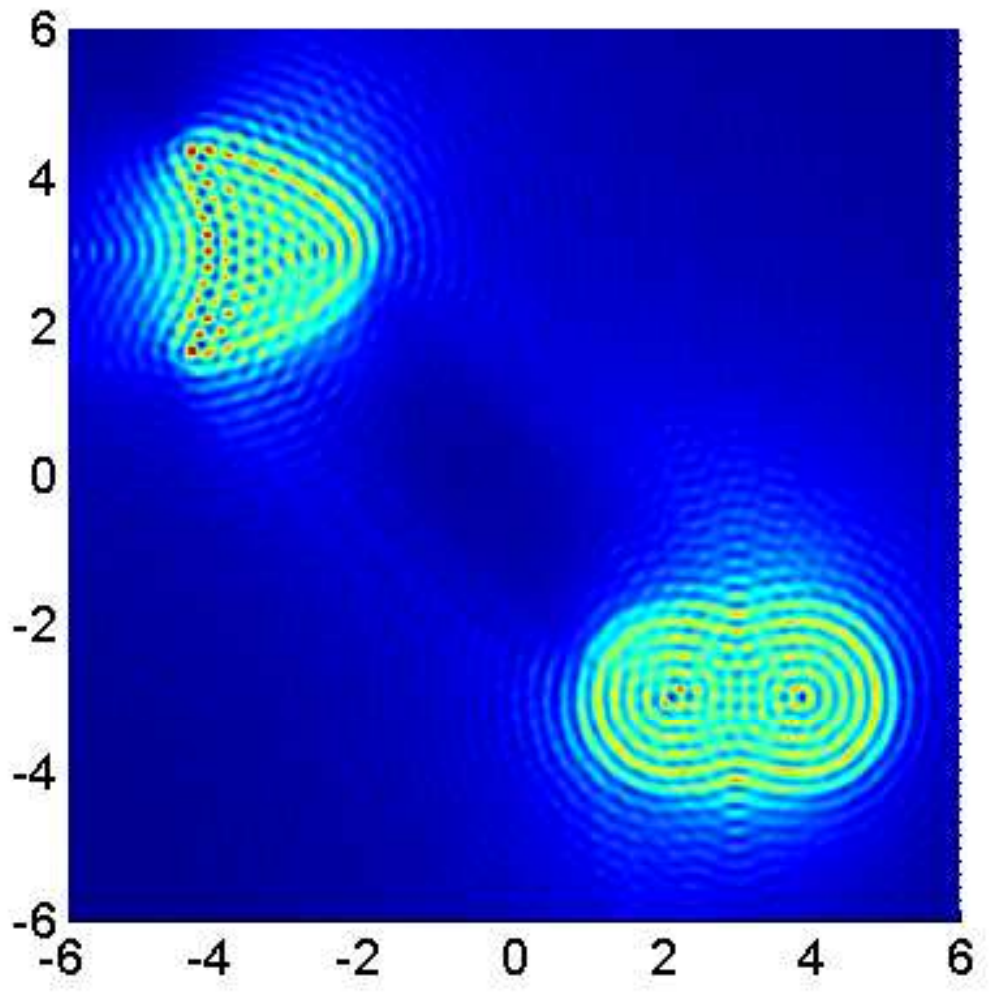}}
\subfigure[\textbf{$I_{\rm FF}^2$}]{\includegraphics[width=0.24\textwidth]{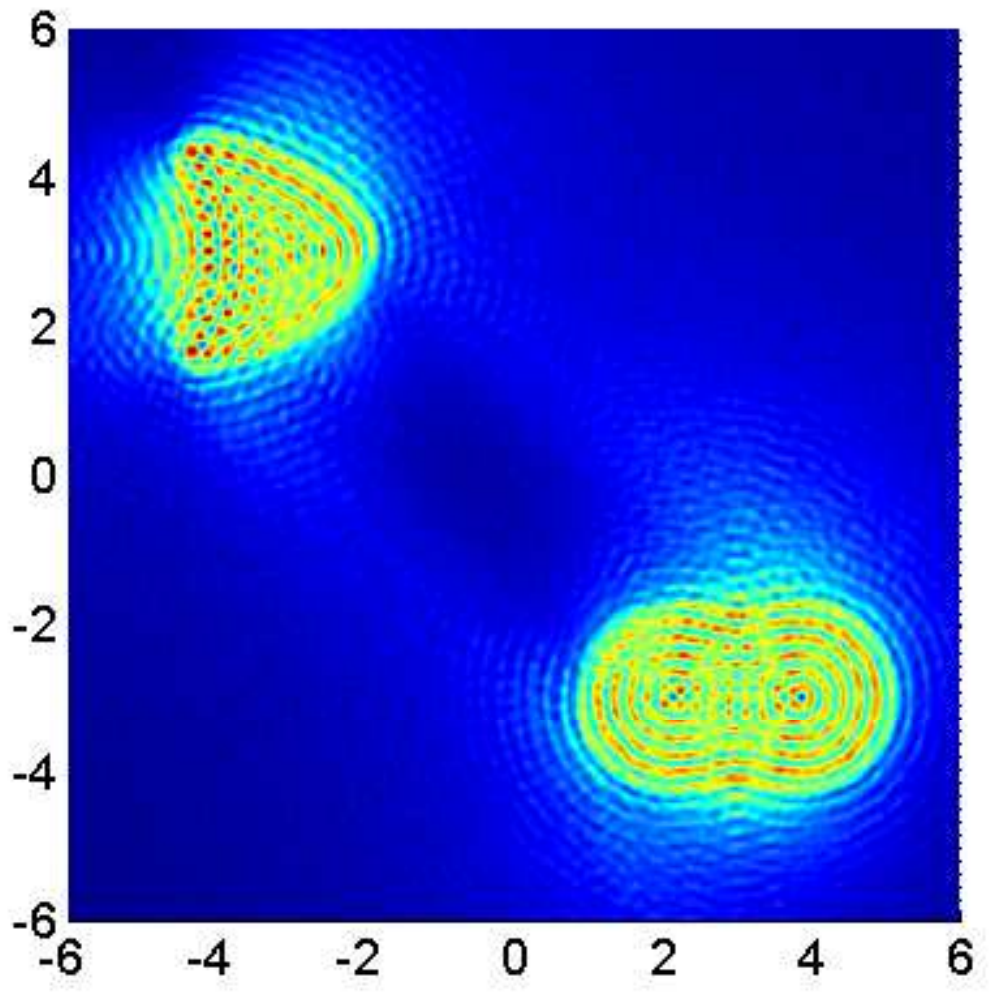}}
\caption{{\bf Example Multiple.} Reconstructions for \textbf{kite-peanut} scatterers with $30\%$ noise.}
\label{fig:kite_peanut_noise30}
\end{figure}

\subsection{Example Multiscalar}
In this example, the underlying scatterer is a combination of a big pear shaped domain with $\rho=2$ centered at $(0,0)$ and a mini disk with radius $\rho=0.1$ centered at $(a,b)=(4,4)$. We set $N=512$ and $\om=8\pi$.
The reconstructions are shown in Figure \ref{fig:pear_circle_n1024_noise30}.
We observe that both parts can be well reconstructed. In particular, the mini disk is also exactly located, even with $30\%$ noise.\\

\begin{figure}
\subfigure[\textbf{True domain}]{\includegraphics[width=0.24\textwidth]{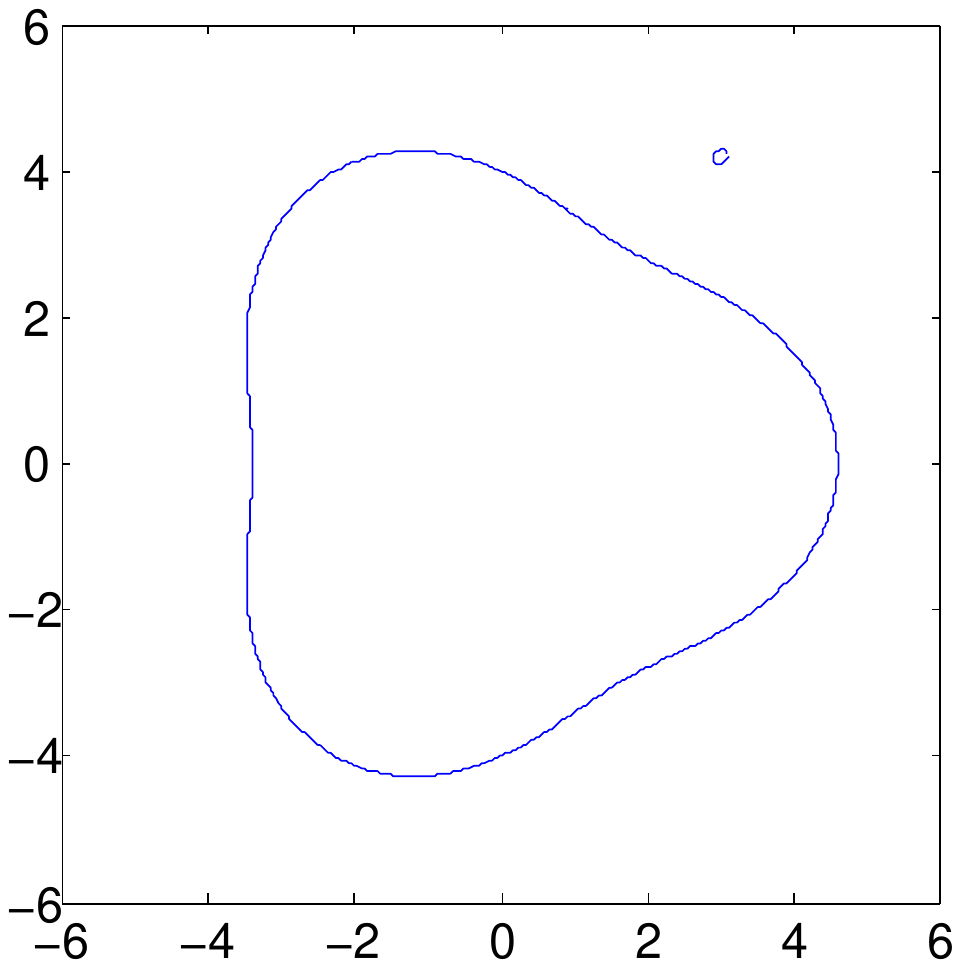}}
\subfigure[\textbf{$I_{\rm SS}^2$}]{\includegraphics[width=0.24\textwidth]{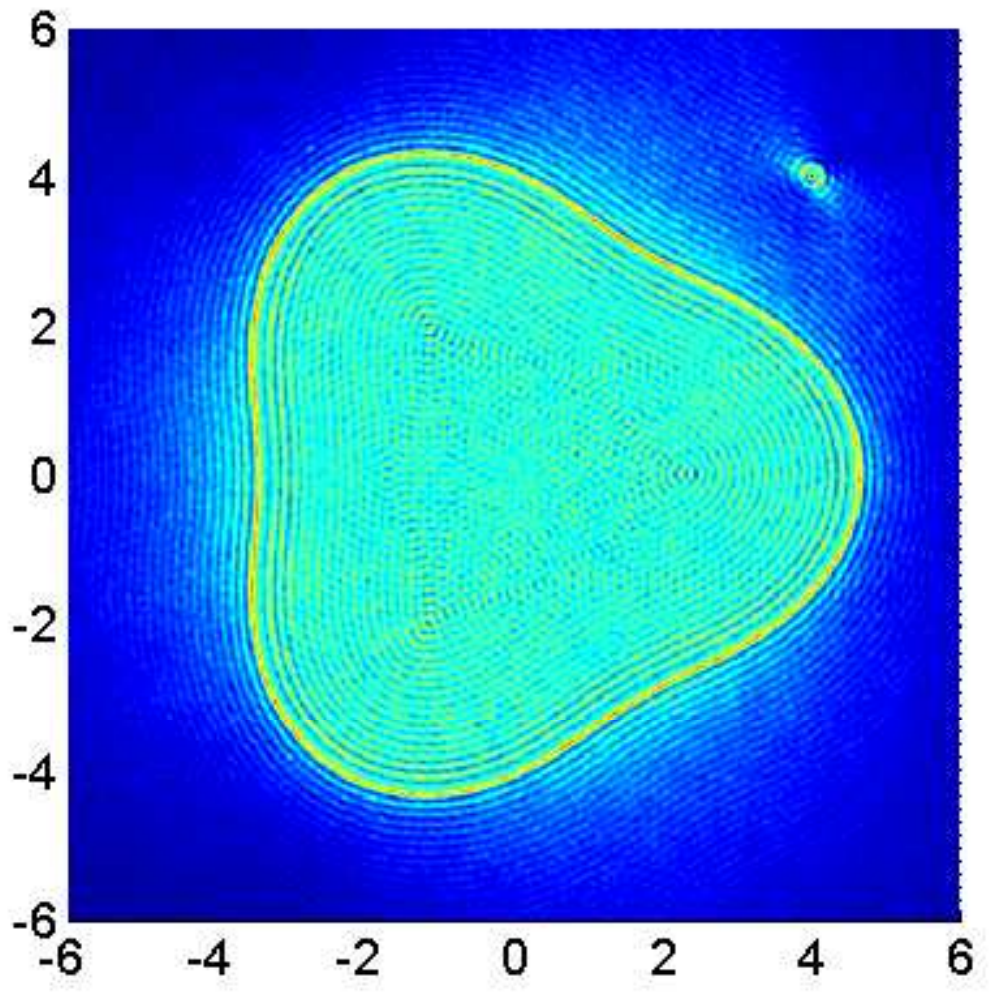}}
\subfigure[\textbf{$I_{\rm PP}^2$}]{\includegraphics[width=0.24\textwidth]{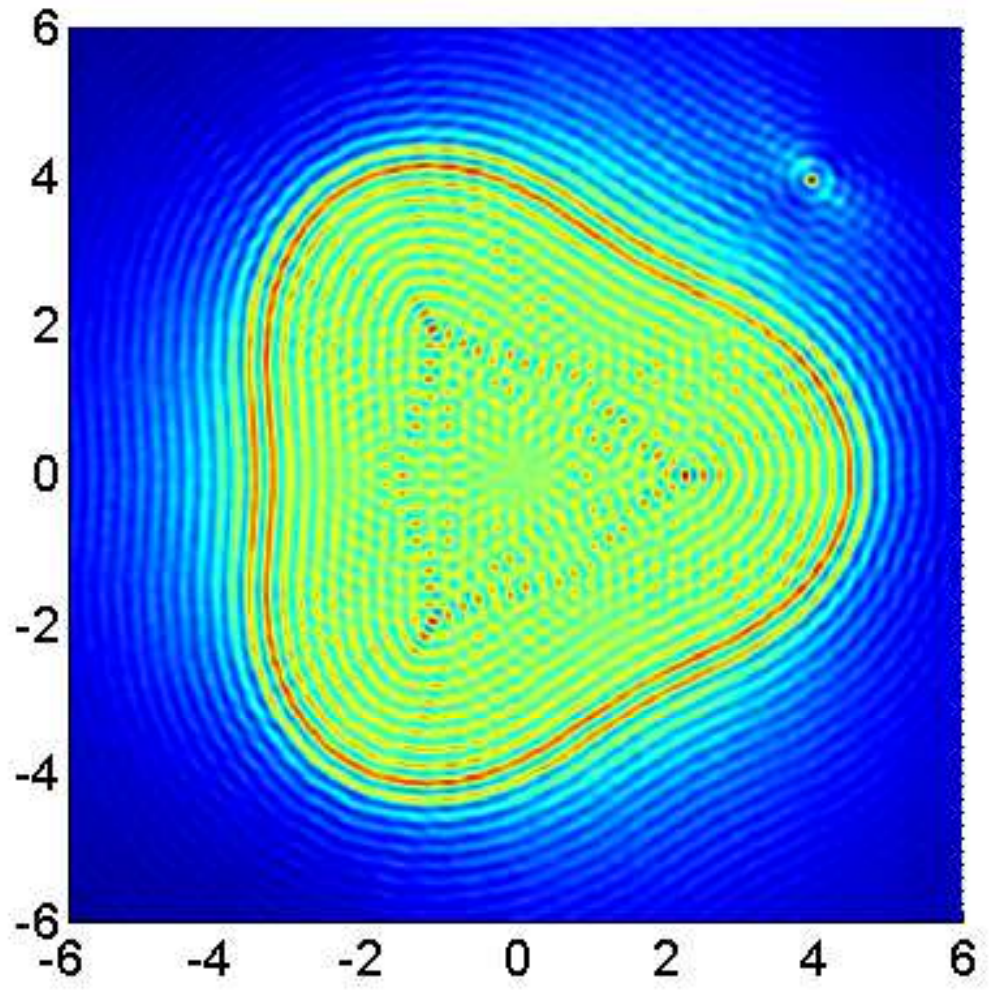}}
\subfigure[\textbf{$I_{\rm FF}^2$}]{\includegraphics[width=0.24\textwidth]{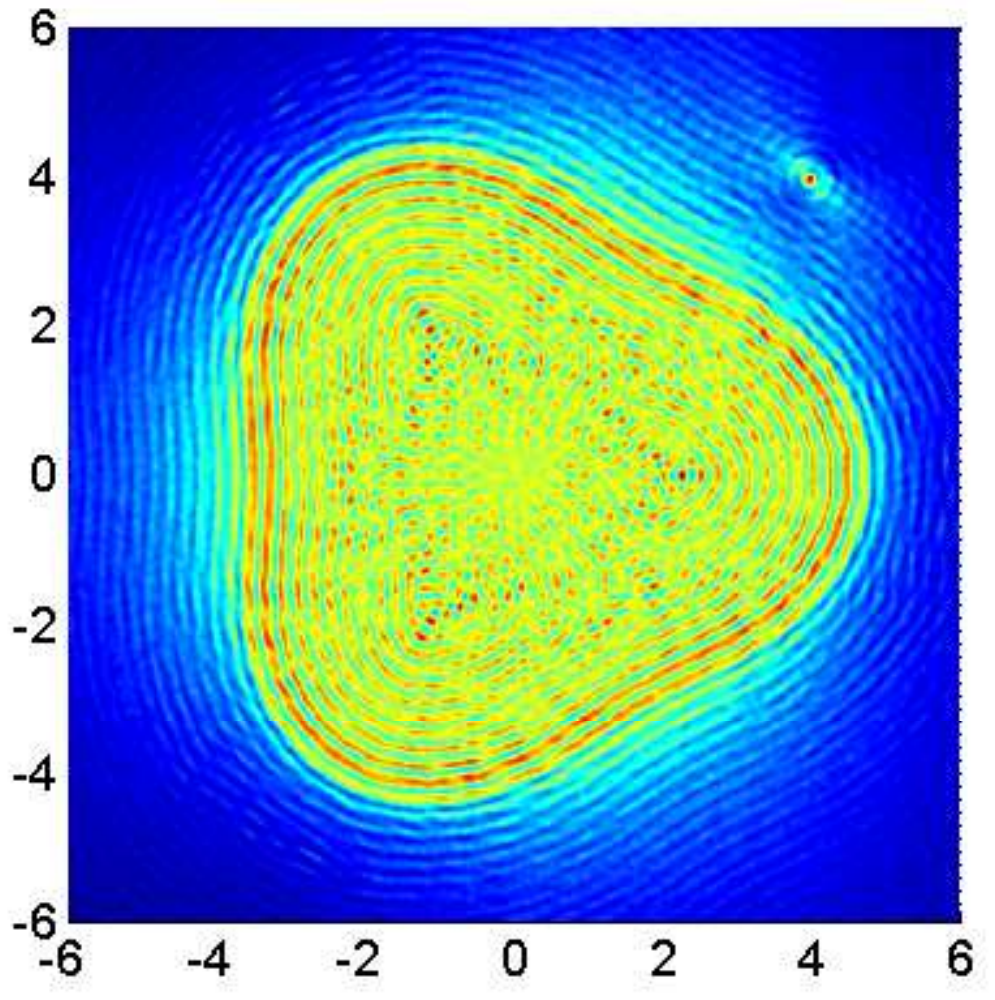}}
\caption{{\bf Example Multiplescalar.} Reconstructions for \textbf{pear-disk} scatterers with $30\%$ noise.}
\label{fig:pear_circle_n1024_noise30}
\end{figure}

\subsection{Example Resolutionlimit}
As shown in Figure \ref{fig:circle_kite_n1024_noise30}(a), the underling scatterer $\Om$ is given as the union of two disjoint obstacles
$\Om=\Om_{1}\cup\Om_{2}$ where $\Om_{1}$ is a big disk centered at $(-2,0)$ with radius $\rho=3$, while $\Om_{2}$ is a kite shaped domain centered at $(2.75,0)$.
These two disjoint components are very close to each other.
Again, we impose Dirichlet boundary condition on the boundary $\pa \Om$.
The results are shown in Figure \ref{fig:circle_kite_n1024_noise30}, from which we observe that the gap appears clearly.\\

\begin{figure}
\centering
\subfigure[\textbf{True domain}]{\includegraphics[width=0.24\textwidth]{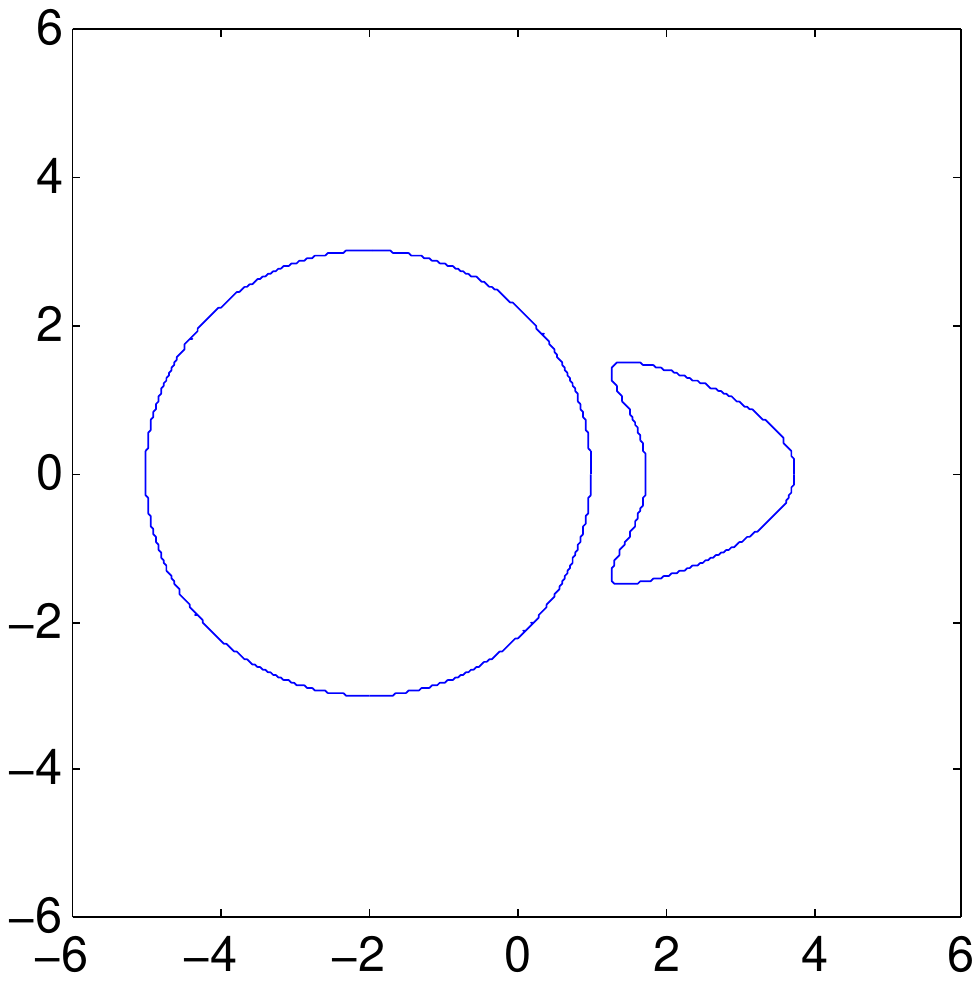}}
\subfigure[\textbf{$I_{\rm SS}^2$}]{\includegraphics[width=0.24\textwidth]{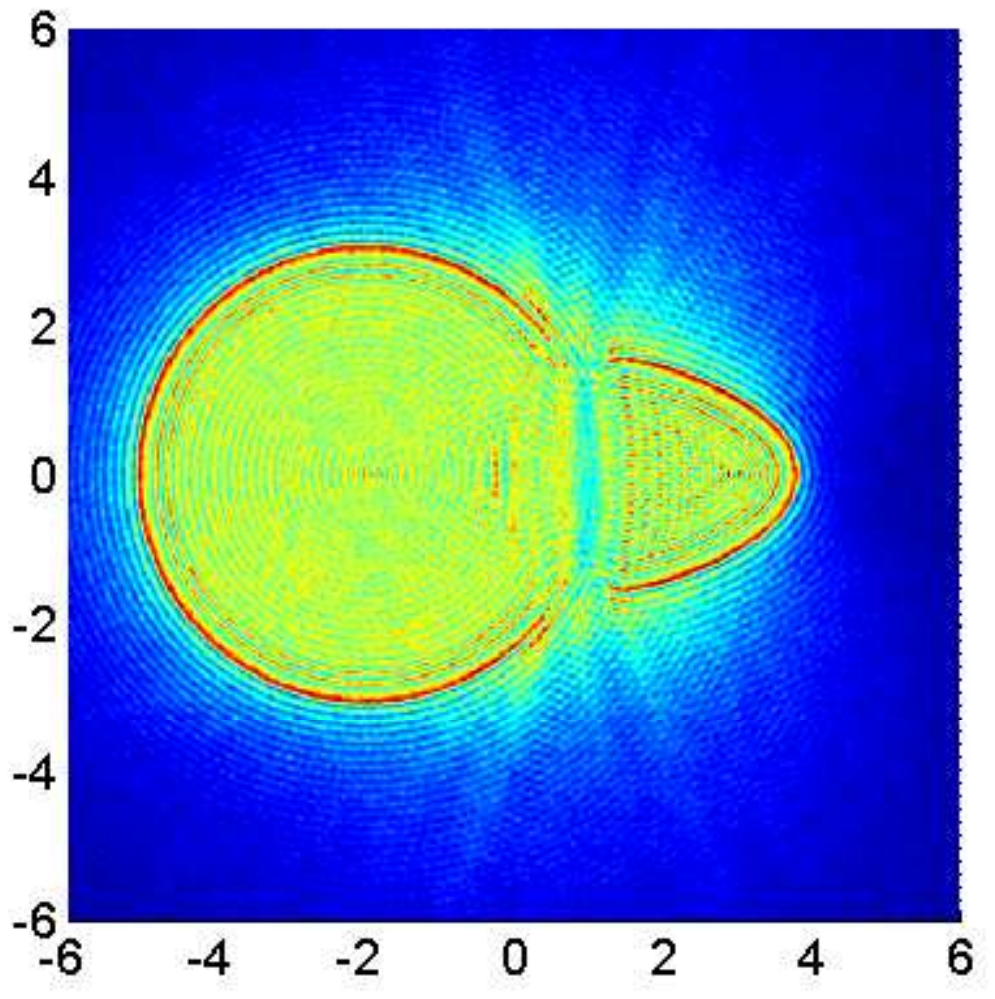}}
\subfigure[\textbf{$I_{\rm PP}^2$}]{\includegraphics[width=0.24\textwidth]{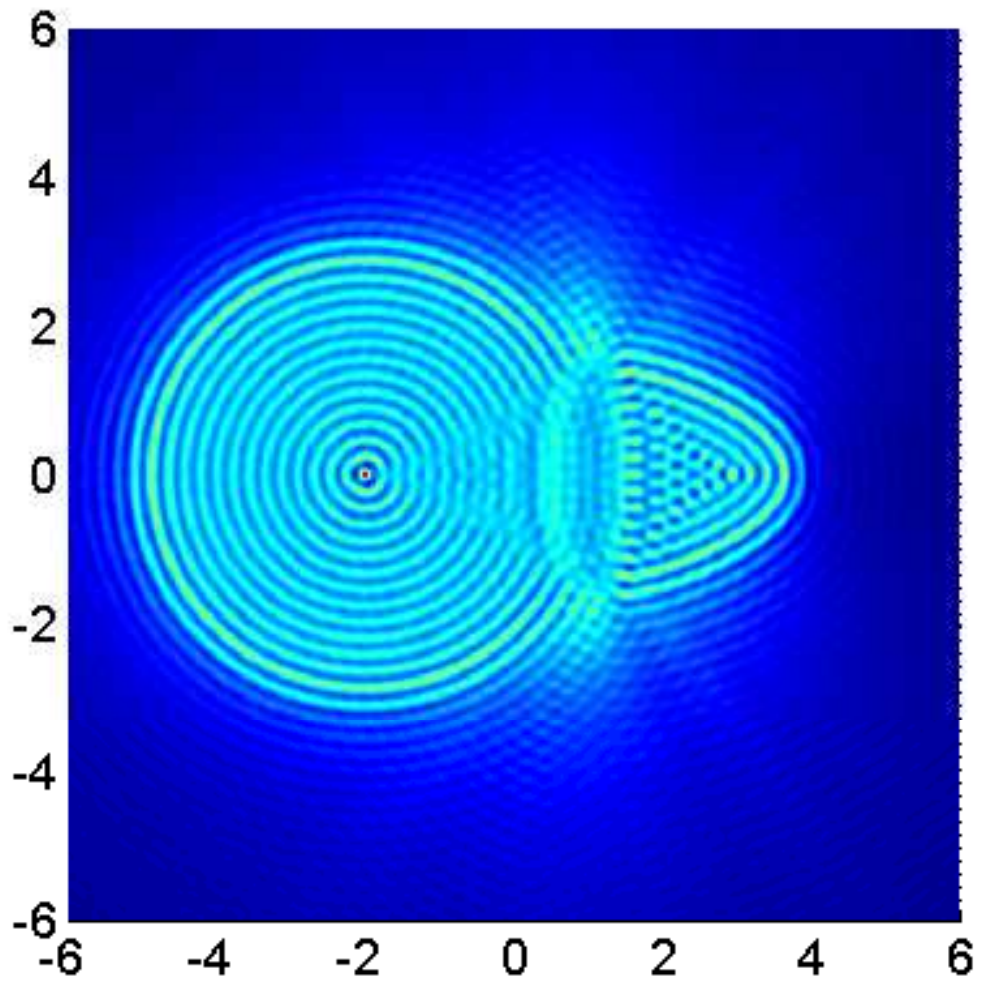}}
\subfigure[\textbf{$I_{\rm FF}^2$}]{\includegraphics[width=0.24\textwidth]{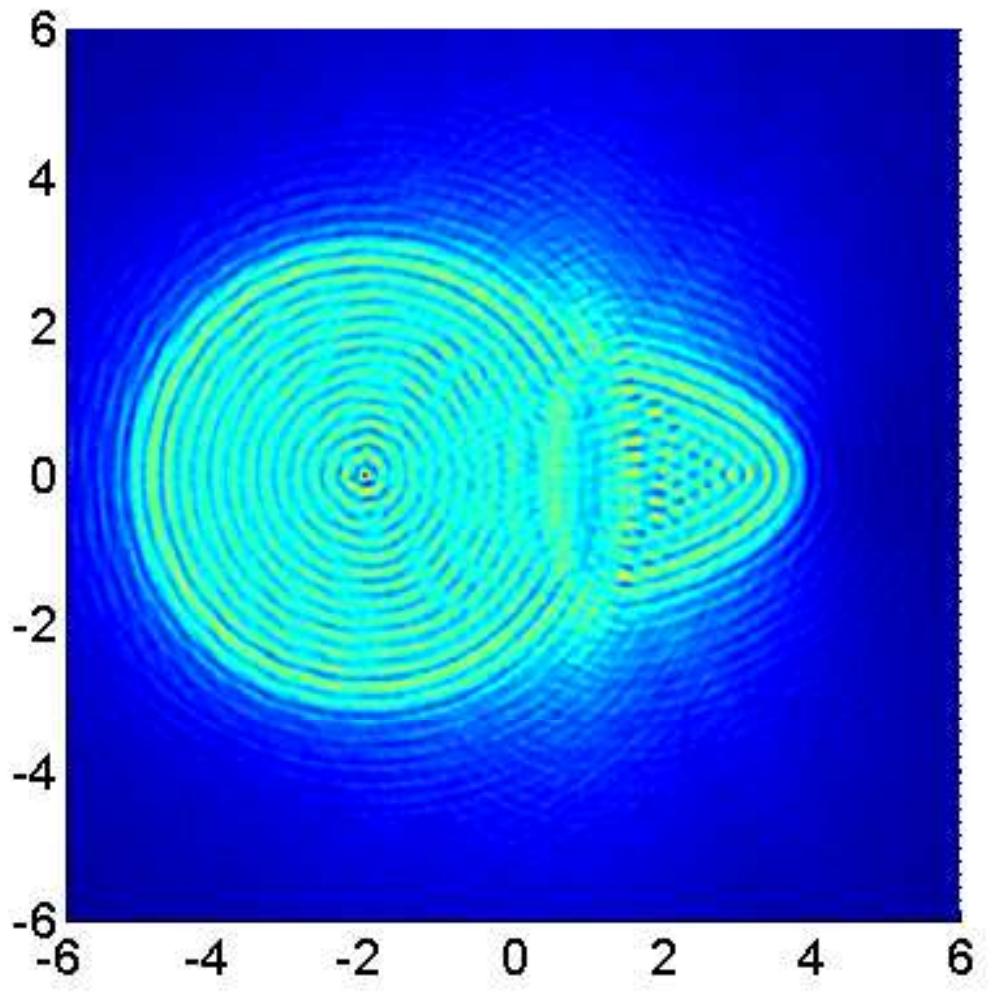}}
\caption{{\bf Example Resolutionlimit.} Reconstructions for \textbf{disk-kite} scatterers with $30\%$ noise.}
\label{fig:circle_kite_n1024_noise30}
\end{figure}
\subsection{Example Limited-aperture}
In the final example, we consider the limited-aperture problems. We take the benchmark test example, a kite shaped rigid body.
The kite shaped domain is non-convex. It has become a criterion to judge the quality of a reconstruction
method whether the concave part of the obstacle can be successfully recovered.
Figure \ref{fig:kite_limited_quarters} shows the reconstructions with different limited-aperture data.
It is clear to note that, even without noise, the results are worse than those with full-aperture data.
However, both the location and shape of the kite are still roughly captured.
As expected, the illuminated part is well reconstructed, which is as good as the one with full-aperture data,
while the "shadow region" is indeed elongated because of the limited data aperture and the high frequency $\om=8\pi$.
We also observe, from the 2nd and 3rd columns of Figure \ref{fig:kite_limited_quarters},
that the concave part can be well reconstructed as long as it is covered by the observation angles.

\begin{figure}
\centering
\subfigure[$(0,\pi/2)$]{\includegraphics[width=0.24\textwidth]{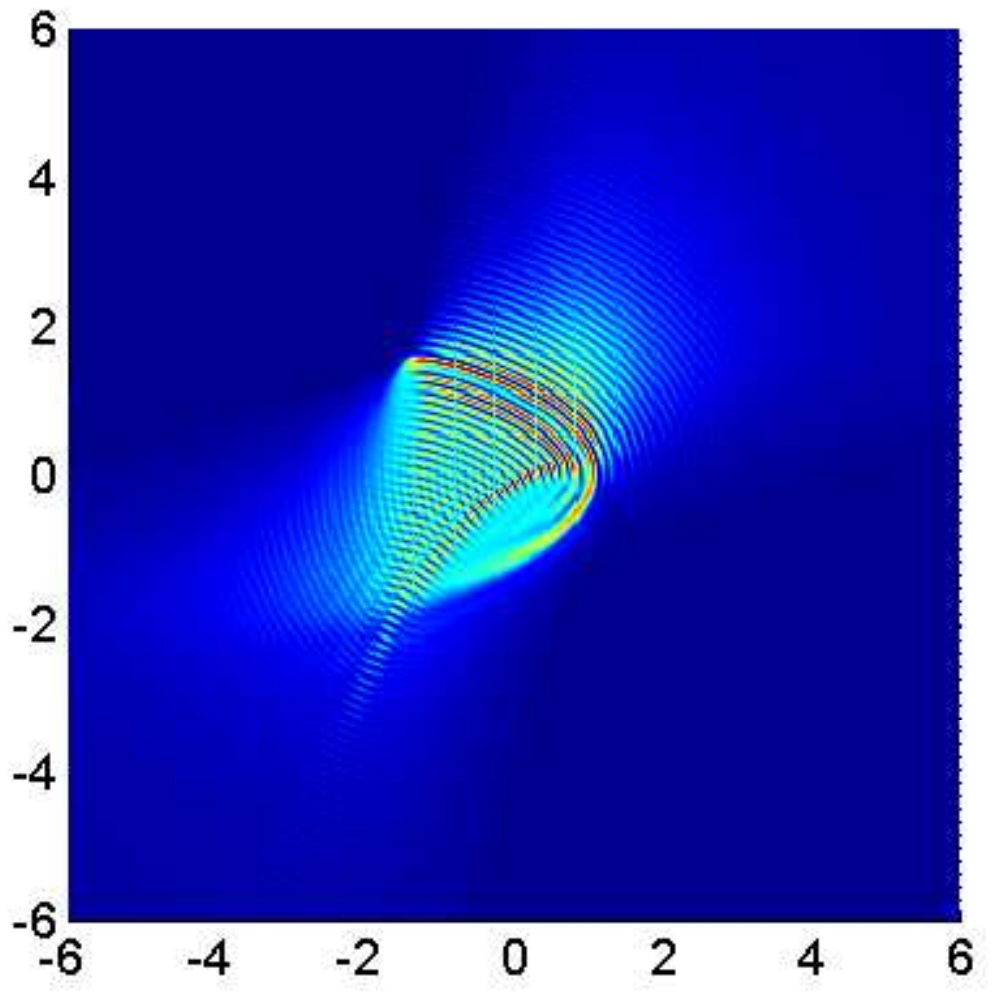}}
\subfigure[$(\pi/2,\pi)$]{\includegraphics[width=0.24\textwidth]{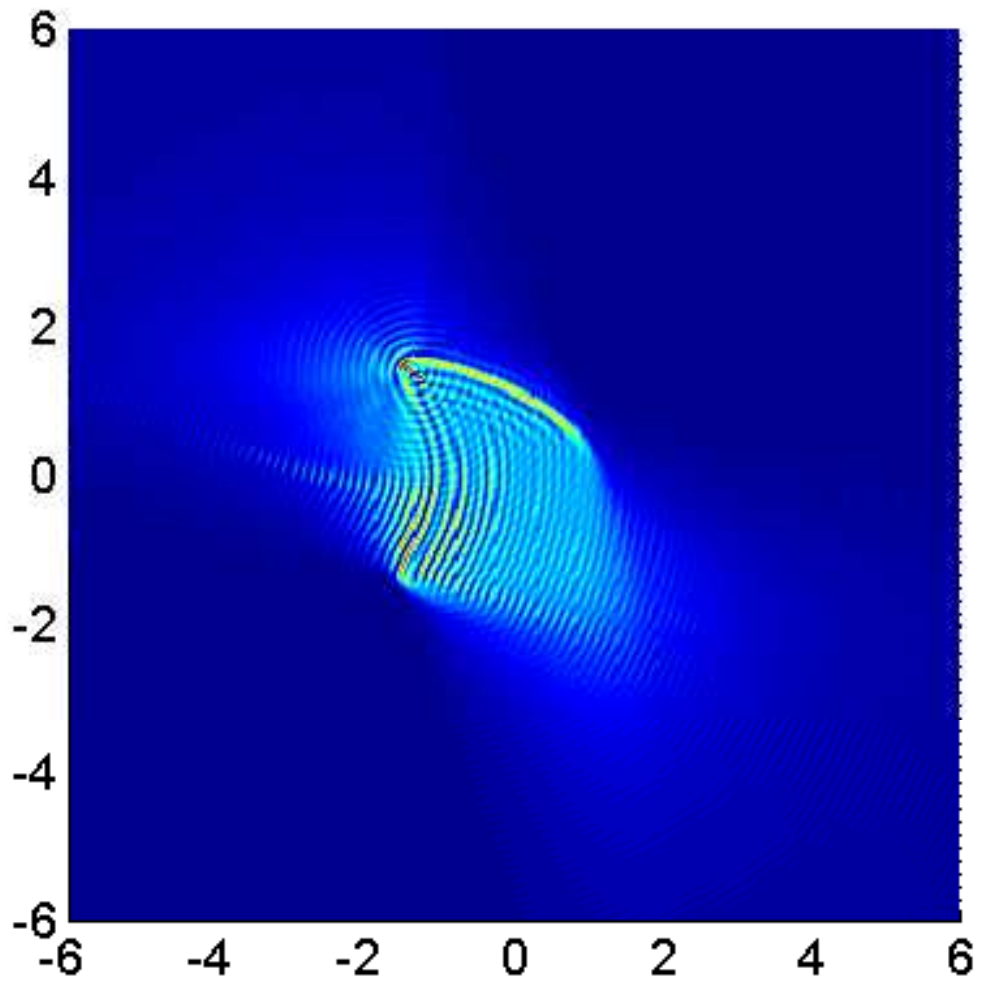}}
\subfigure[$(\pi,3\pi/2)$]{\includegraphics[width=0.24\textwidth]{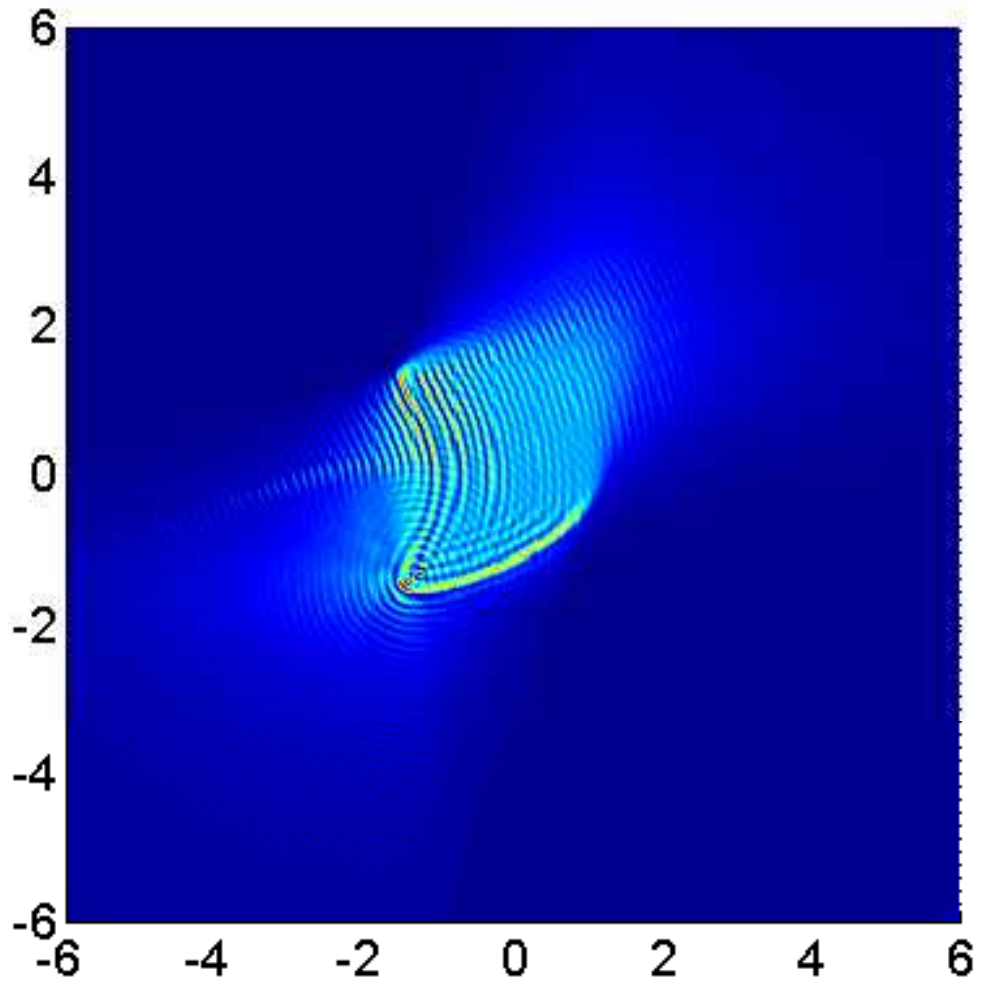}}
\subfigure[$(3\pi/2,2\pi)$]{\includegraphics[width=0.24\textwidth]{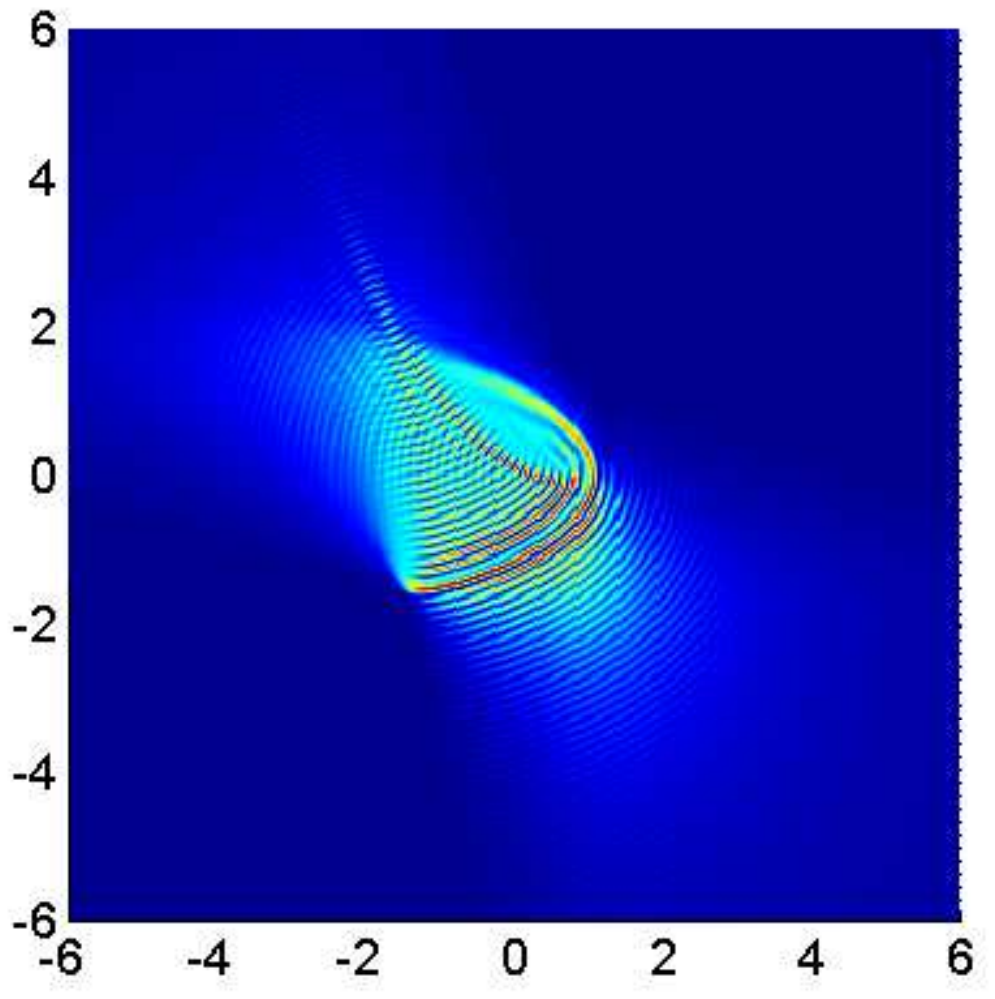}}
\subfigure[$(0,\pi/2)$]{\includegraphics[width=0.24\textwidth]{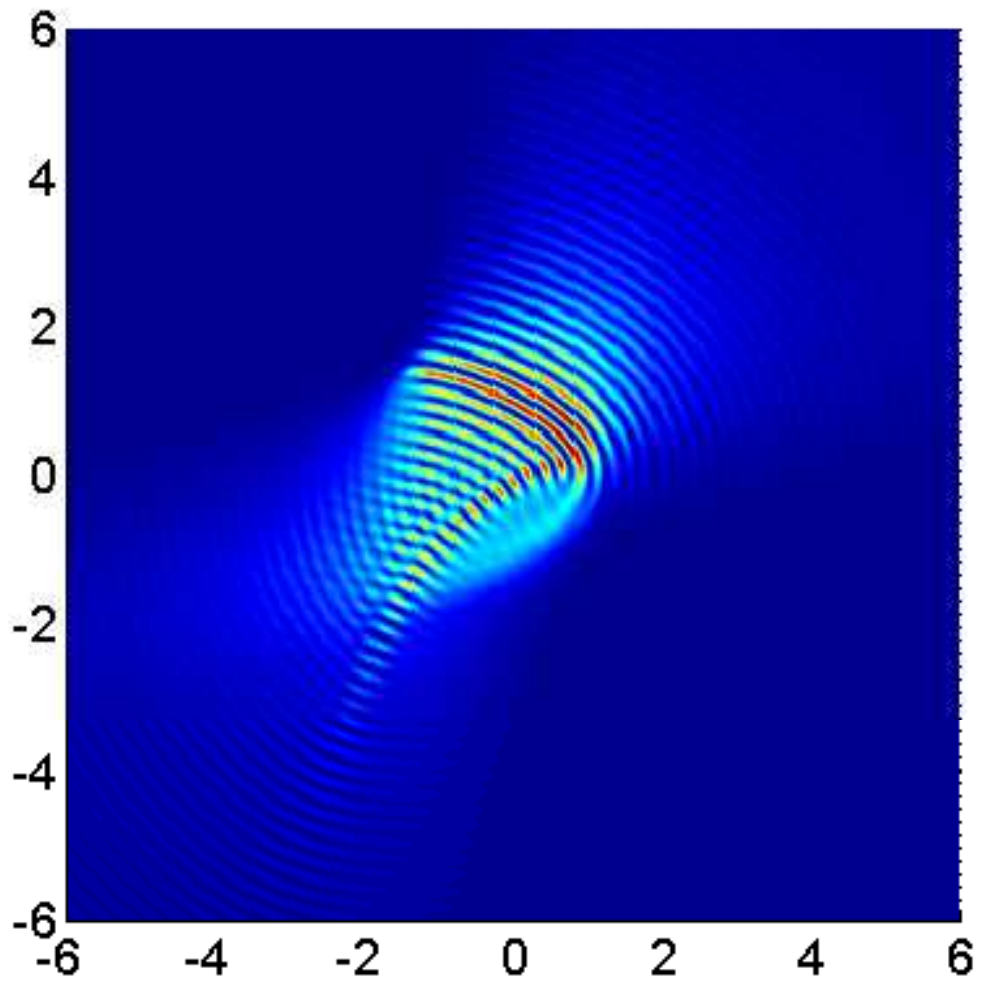}}
\subfigure[$(\pi/2,\pi)$]{\includegraphics[width=0.24\textwidth]{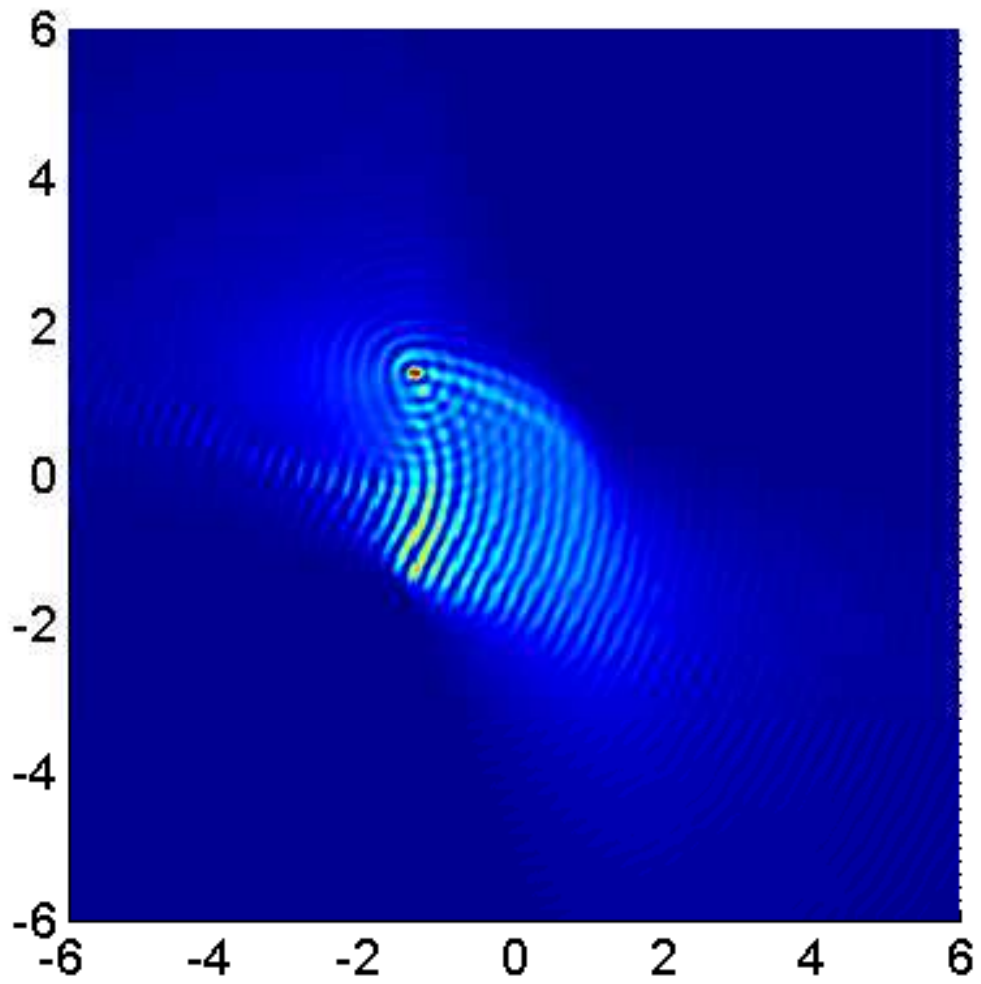}}
\subfigure[$(\pi,3\pi/2)$]{\includegraphics[width=0.24\textwidth]{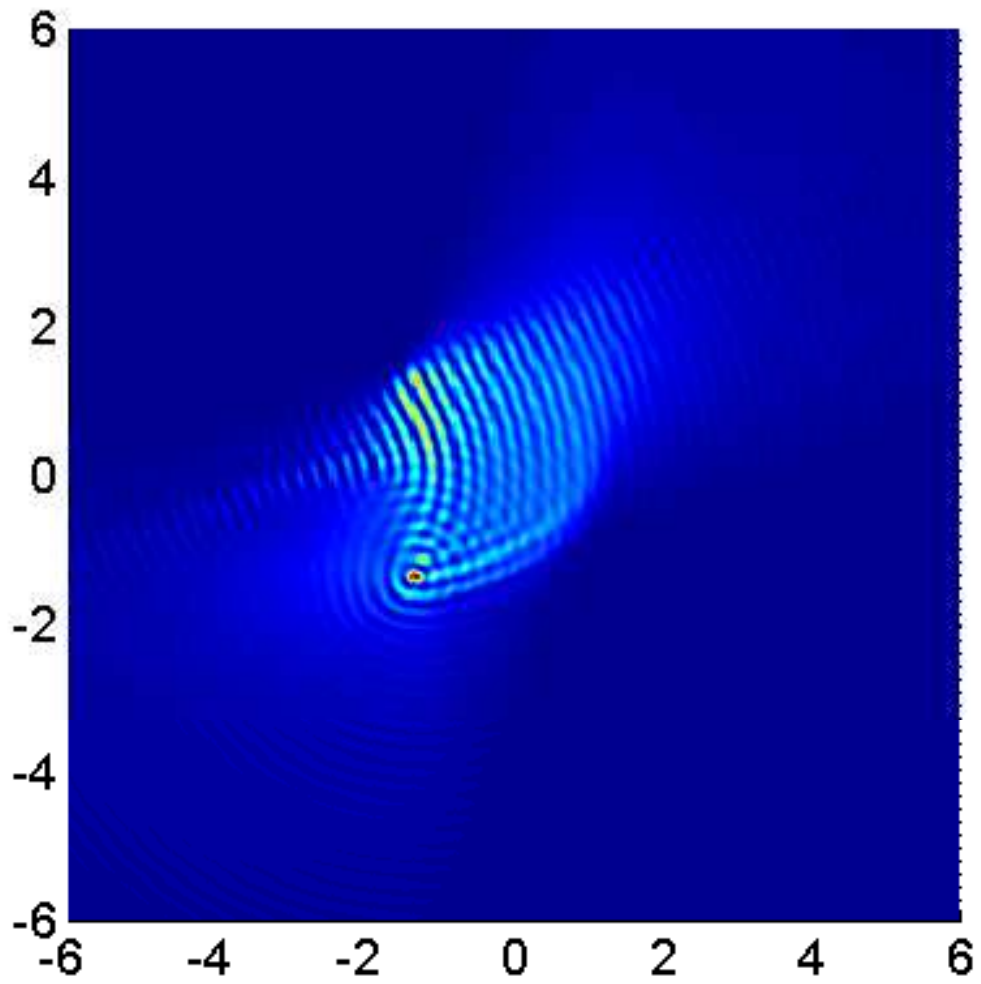}}
\subfigure[$(3\pi/2,2\pi)$]{\includegraphics[width=0.24\textwidth]{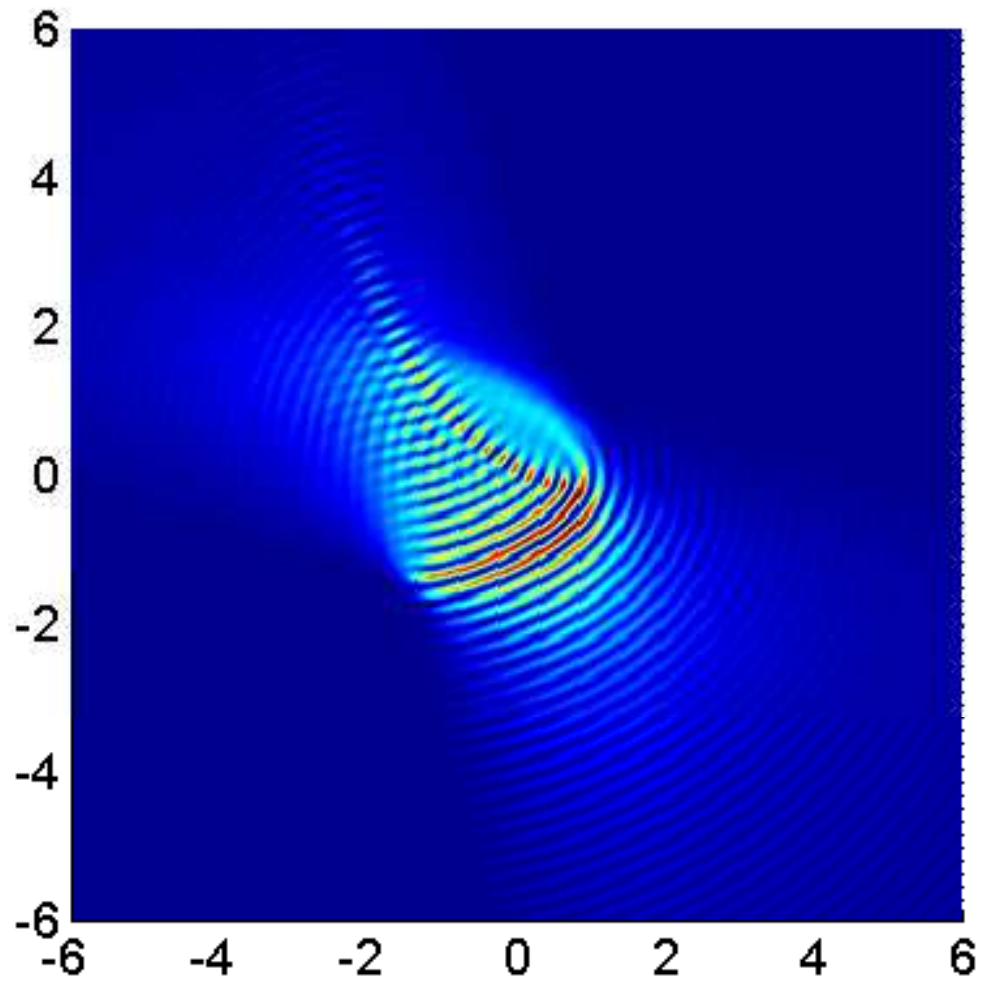}}
\subfigure[$(0,\pi/2)$]{\includegraphics[width=0.24\textwidth]{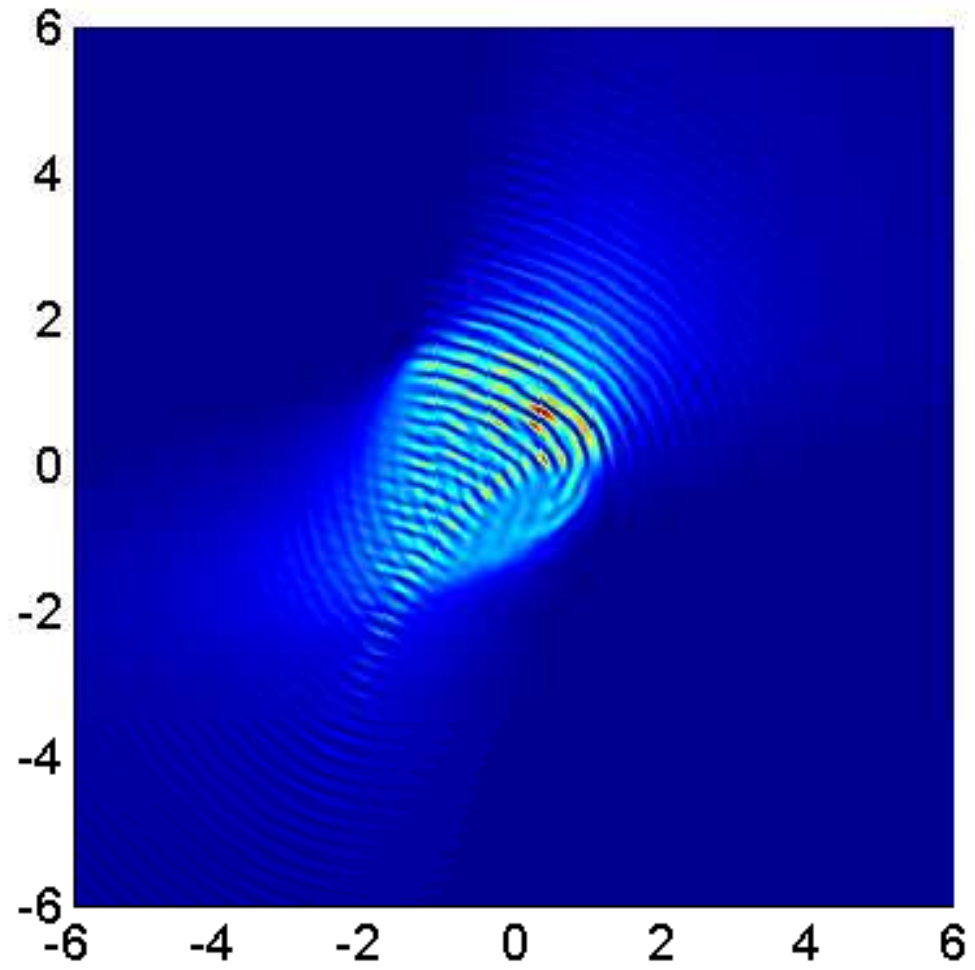}}
\subfigure[$(\pi/2,\pi)$]{\includegraphics[width=0.24\textwidth]{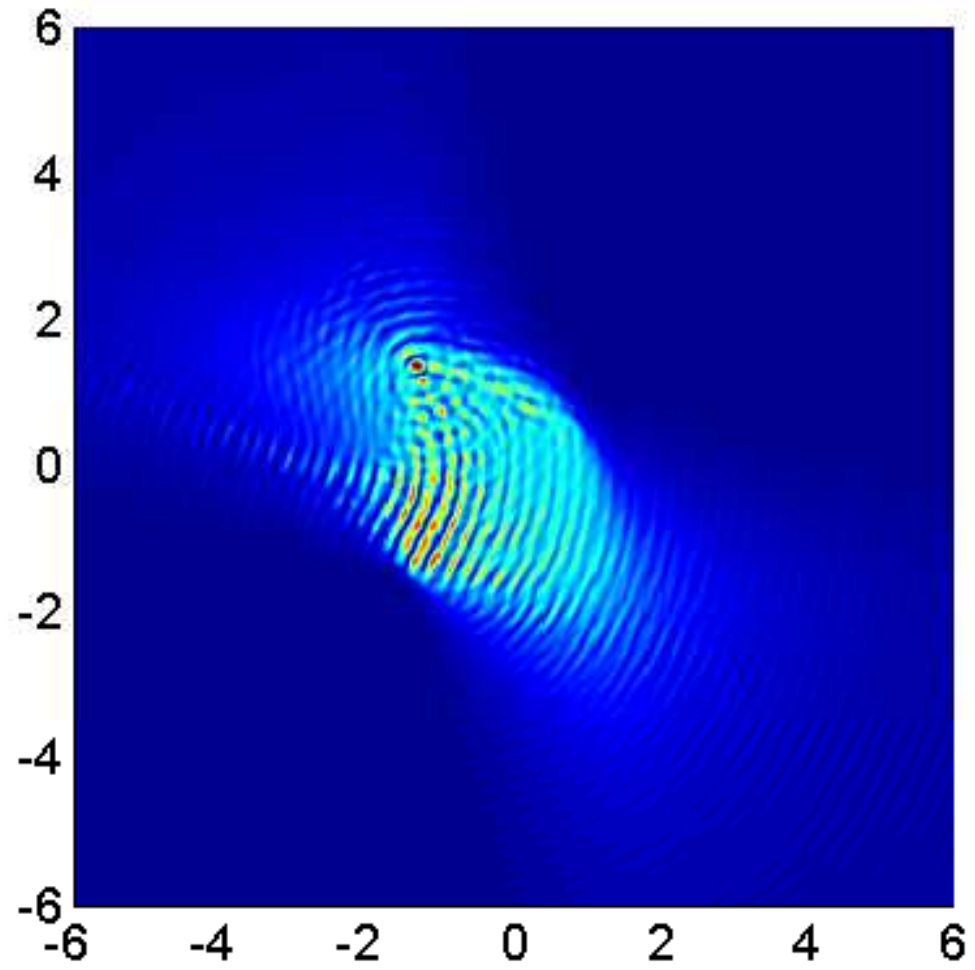}}
\subfigure[$(\pi,3\pi/2)$]{\includegraphics[width=0.24\textwidth]{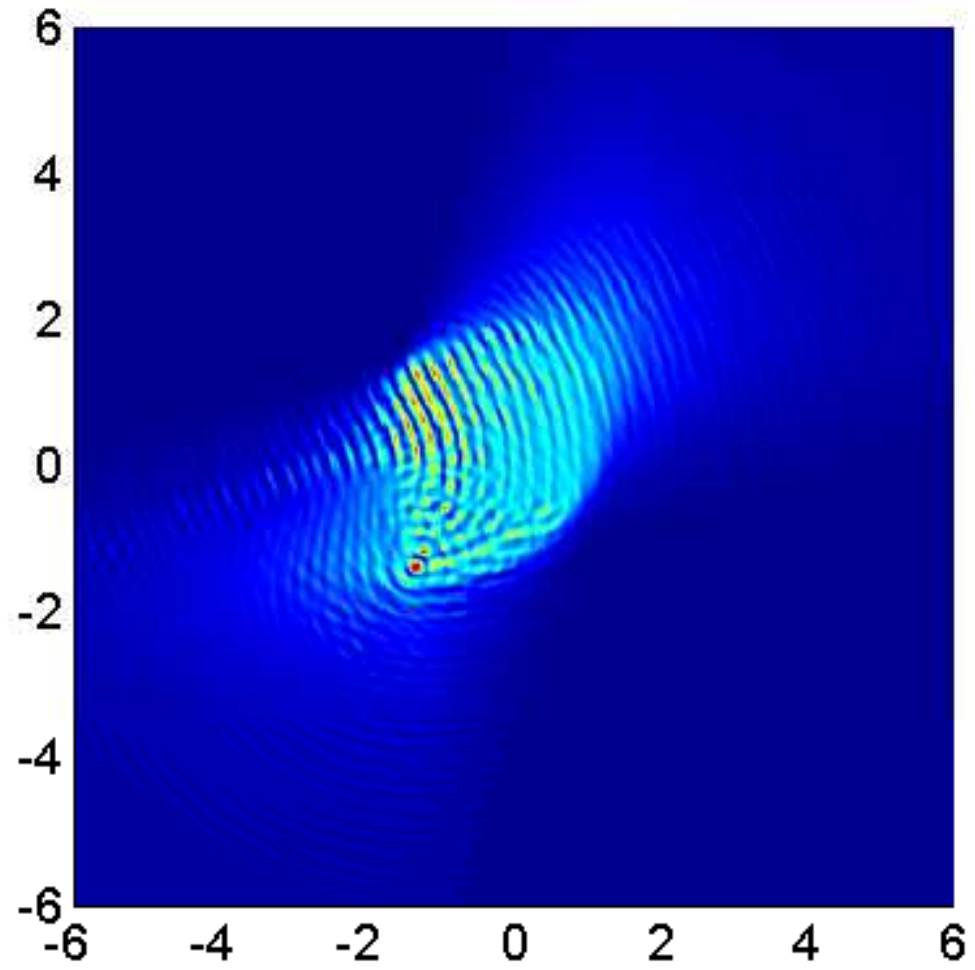}}
\subfigure[$(3\pi/2,2\pi)$]{\includegraphics[width=0.24\textwidth]{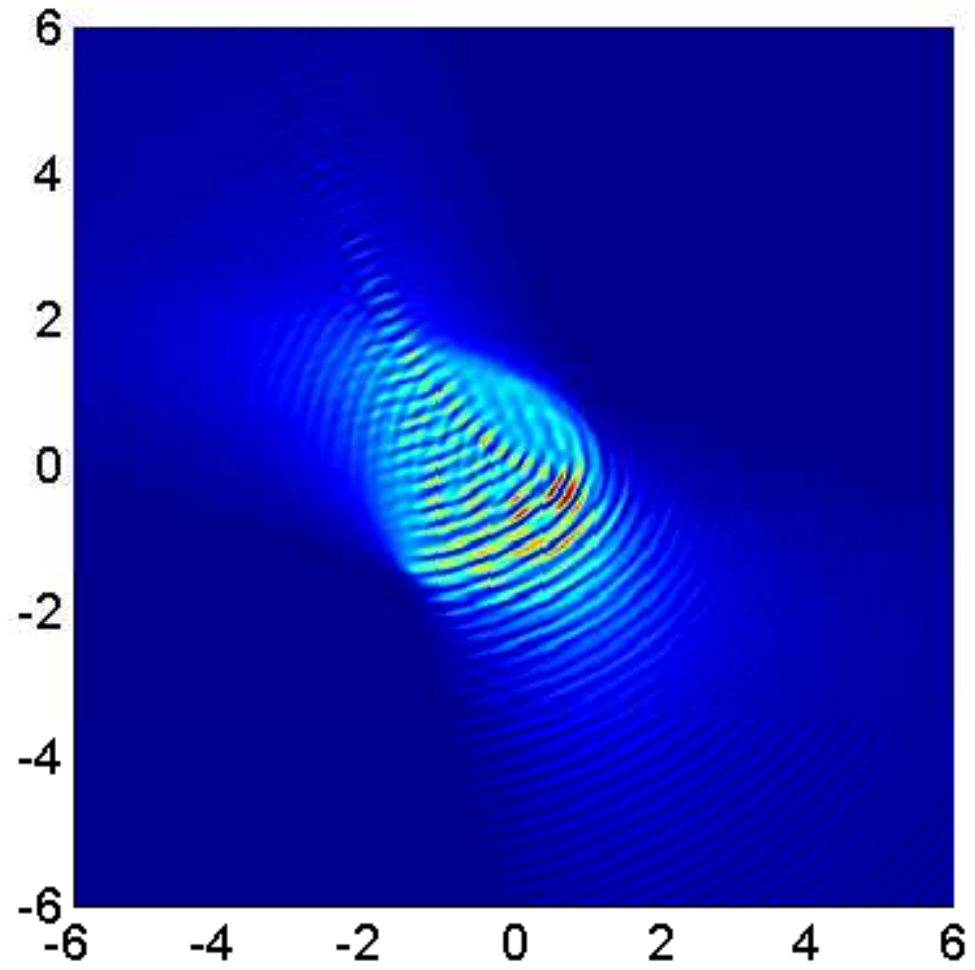}}
\caption{{\bf Example Limited-aperture.} Reconstructions of kite shaped domain with limited-aperture data and no noise. Top row: SS case. Middle row: PP case. Bottom row: FF case.}
\label{fig:kite_limited_quarters}
\end{figure}

If we fix the observation angles distributed over $[0,\pi/2]$.
To obtain more information of the "shadow region", we take, in the final example, a lower frequency $\om=4\pi$.
The top row of Figure \ref{fig:limit_aperture_reconstrucion_I2} shows the results with $10\%$ noise.
Using the data retrieval techniques proposed in the subsection \ref{sec:limited-aperture}, we firstly retrieve those data with observation angles in $(\pi/2, 2\pi)$, and then
reconstruct the kite with the help of the proposed indicators. The artificial domain $B$ is chosen to be a ball centered at the origin with radius $5$.
The bellow row of Figure \ref{fig:limit_aperture_reconstrucion_I2} shows the corresponding results, which are indeed improved than those in the
top row. In particular, the concave part is successfully reconstructed by using the indicator $I_{\rm SS}$.

\begin{figure}[htbp]
\centering
\subfigure[\textbf{$I^{2,limit}_{\rm SS}(z)$}]
   {\includegraphics[width=0.32\textwidth]{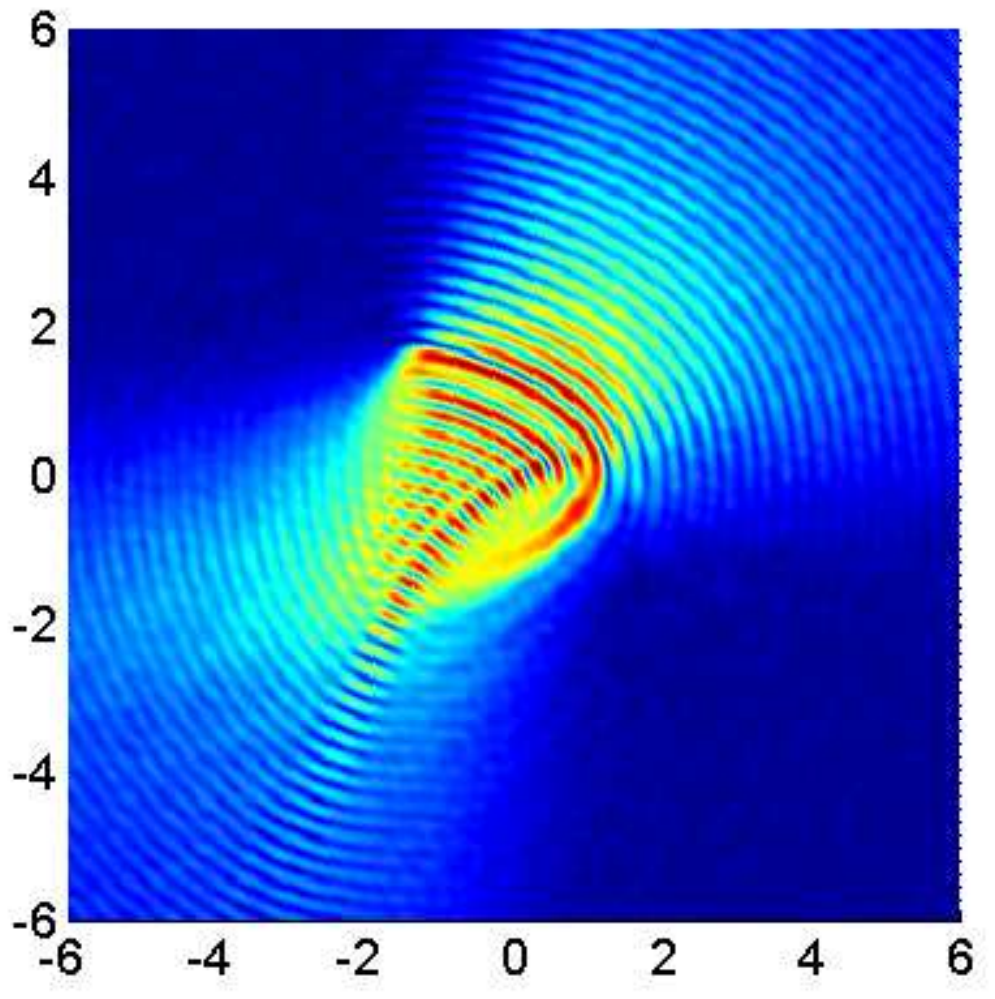}}
\subfigure[\textbf{$I^{2,limit}_{\rm PP}(z)$}]
   {\includegraphics[width=0.32\textwidth]{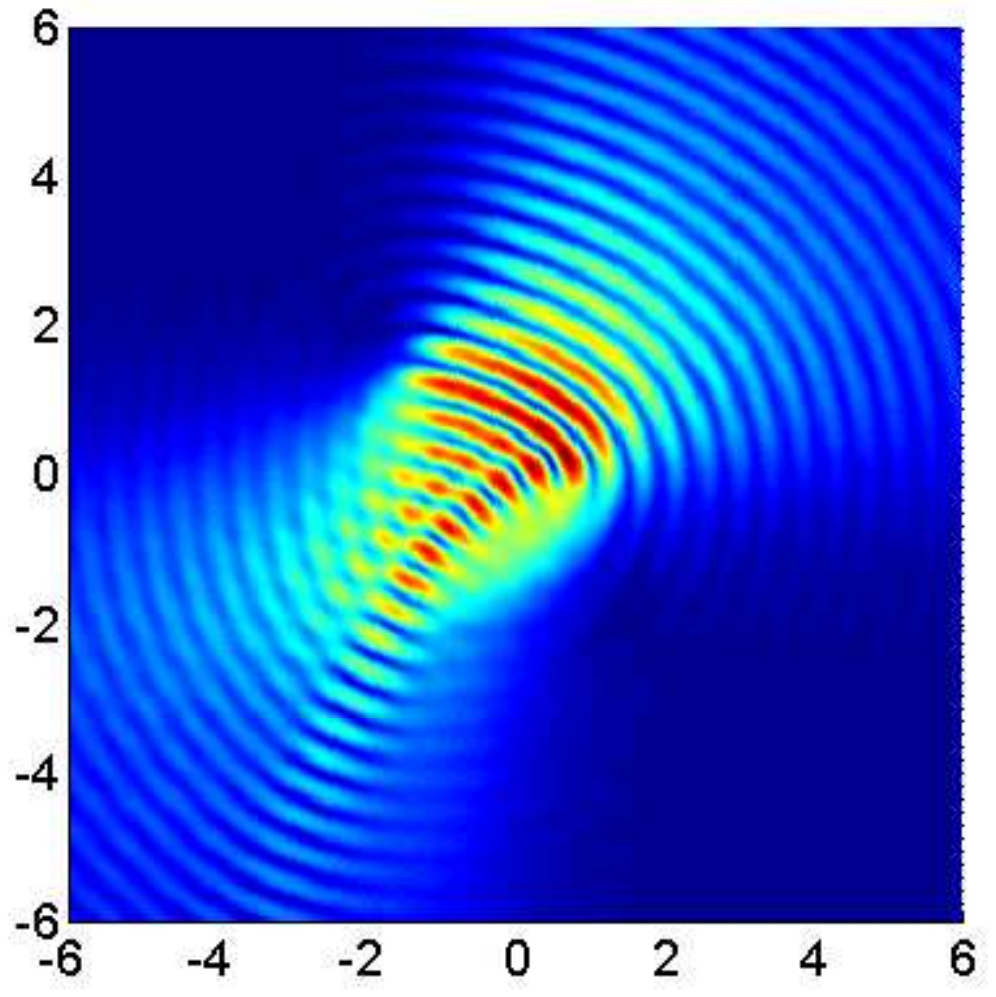}}
\subfigure[\textbf{$I^{2,limit}_{\rm FF}(z)$}]
   {\includegraphics[width=0.32\textwidth]{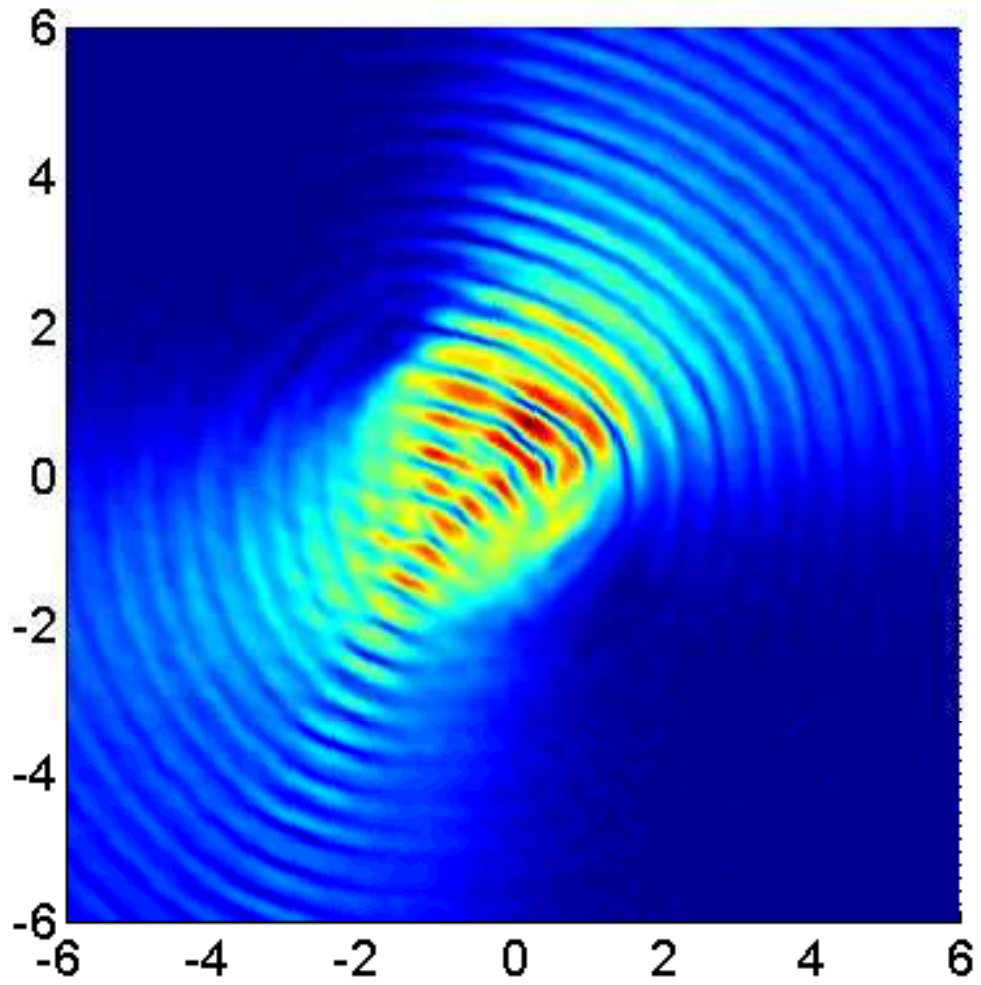}}
\subfigure[\textbf{$\tilde{I}^{2,full}_{\rm SS}(z)$}]
   {\includegraphics[width=0.32\textwidth]{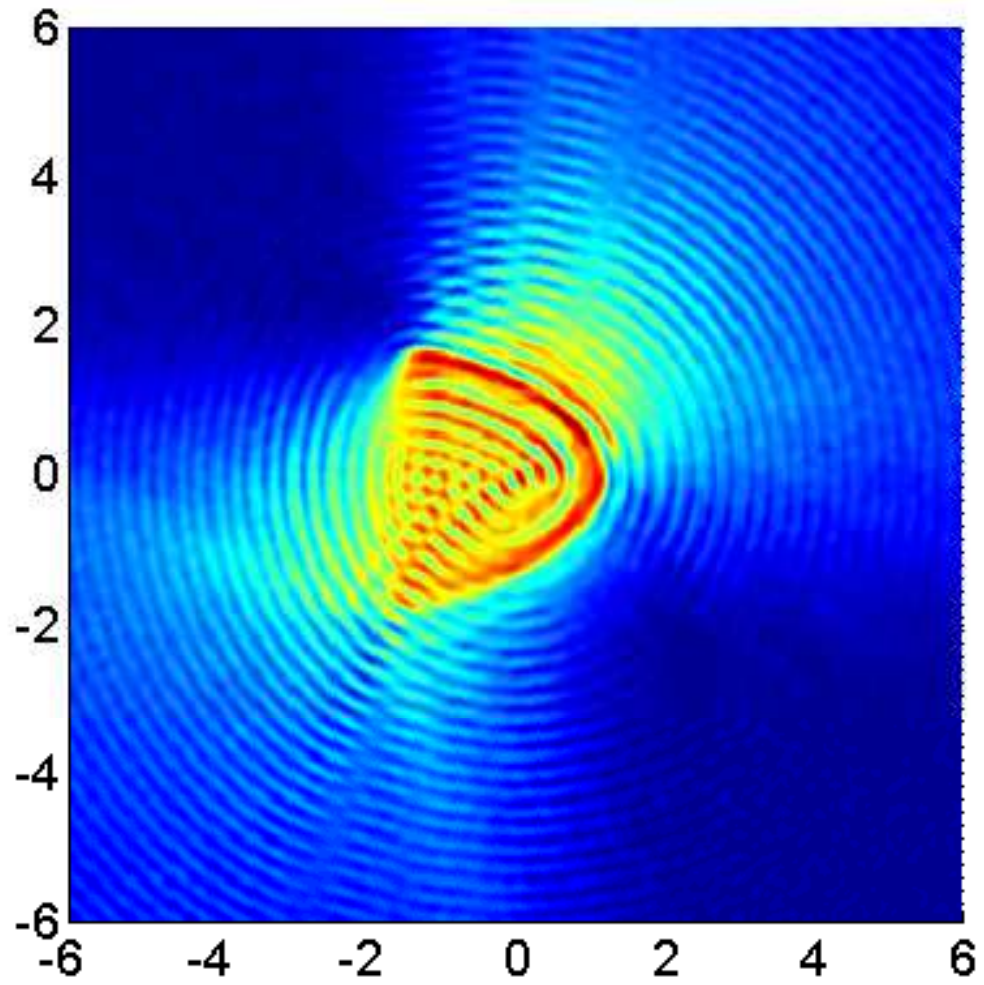}}
\subfigure[\textbf{$\tilde{I}^{2,full}_{\rm PP}(z)$}]
   {\includegraphics[width=0.32\textwidth]{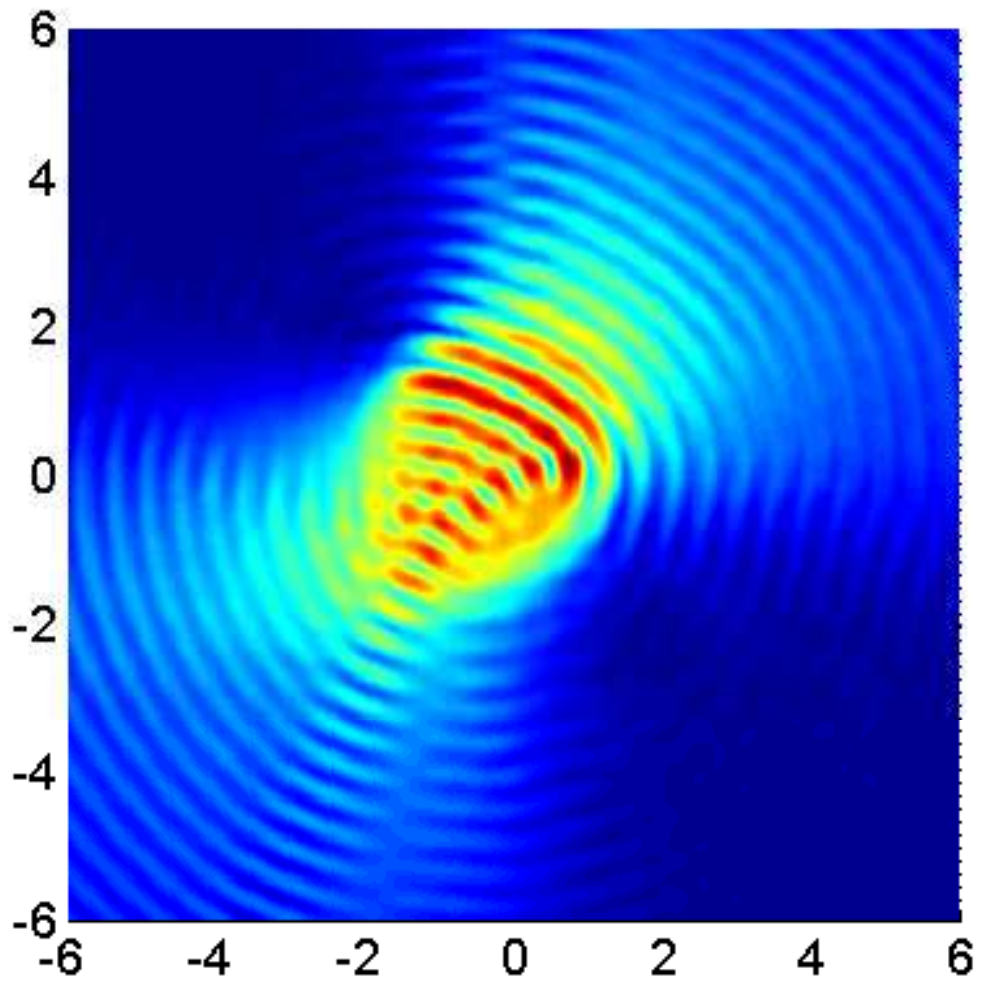}}
\subfigure[\textbf{$\tilde{I}^{2,full}_{\rm FF}(z)$}]
   {\includegraphics[width=0.32\textwidth]{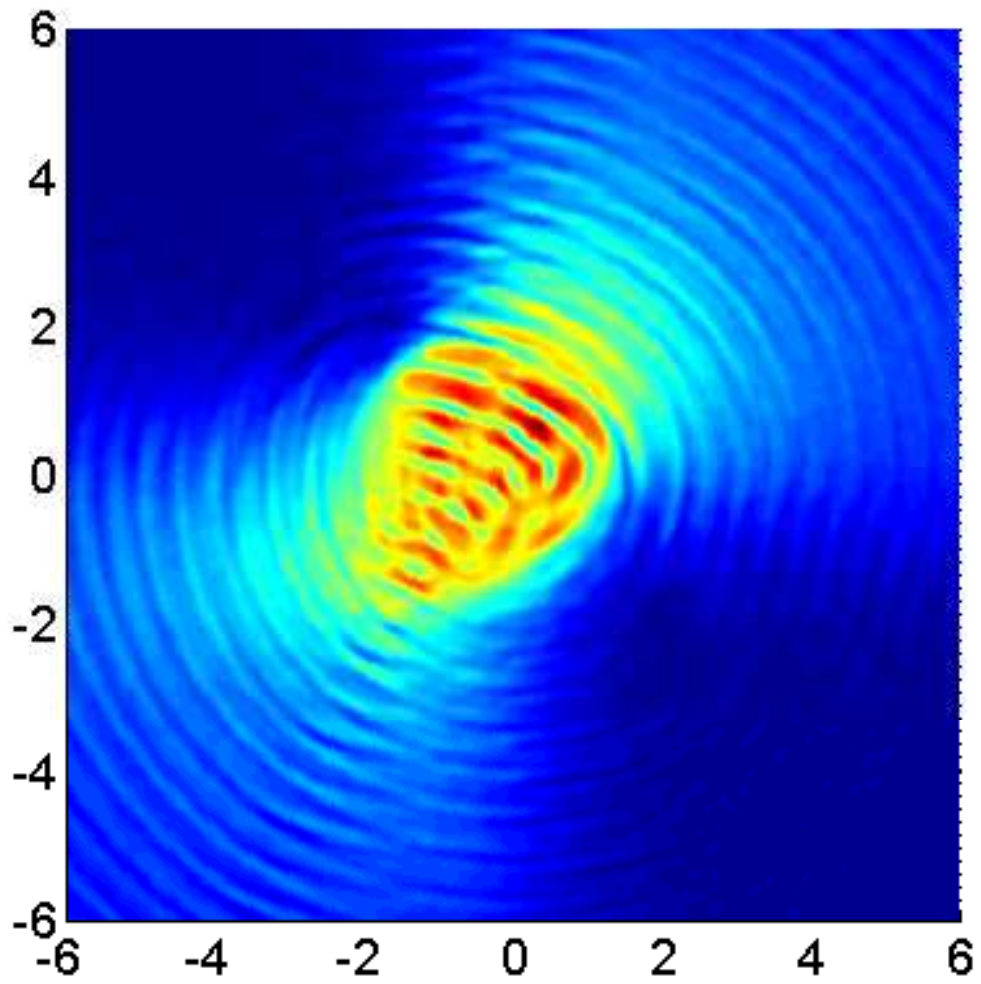}}
\caption{{\bf Example Limited-aperture.} Reconstructions of kite-shaped domain with $10\%$ noise and $w=4\pi$.}
\label{fig:limit_aperture_reconstrucion_I2}
\end{figure}

Finally, we consider the cases with one or several incident directions. The corresponding uniqueness of the inverse problem is still open now.
Figure \ref{fig:limit_aperture_reconstrucion_I2_less_incidentDirection}(a)-(d) show the results with only one incident direction. We observe that the location
is roughly captured. However, the shape information fails to be reconstructed.
With the increase of the incident directions, the shape is well reconstructed better and better, see Figure
\ref{fig:limit_aperture_reconstrucion_I2_less_incidentDirection}(e)-(h).
\begin{figure}[htbp]
\centering
\subfigure[\textbf{$\mathbf{d}=(1,0)^{T}$}]
   {\includegraphics[width=0.24\textwidth]{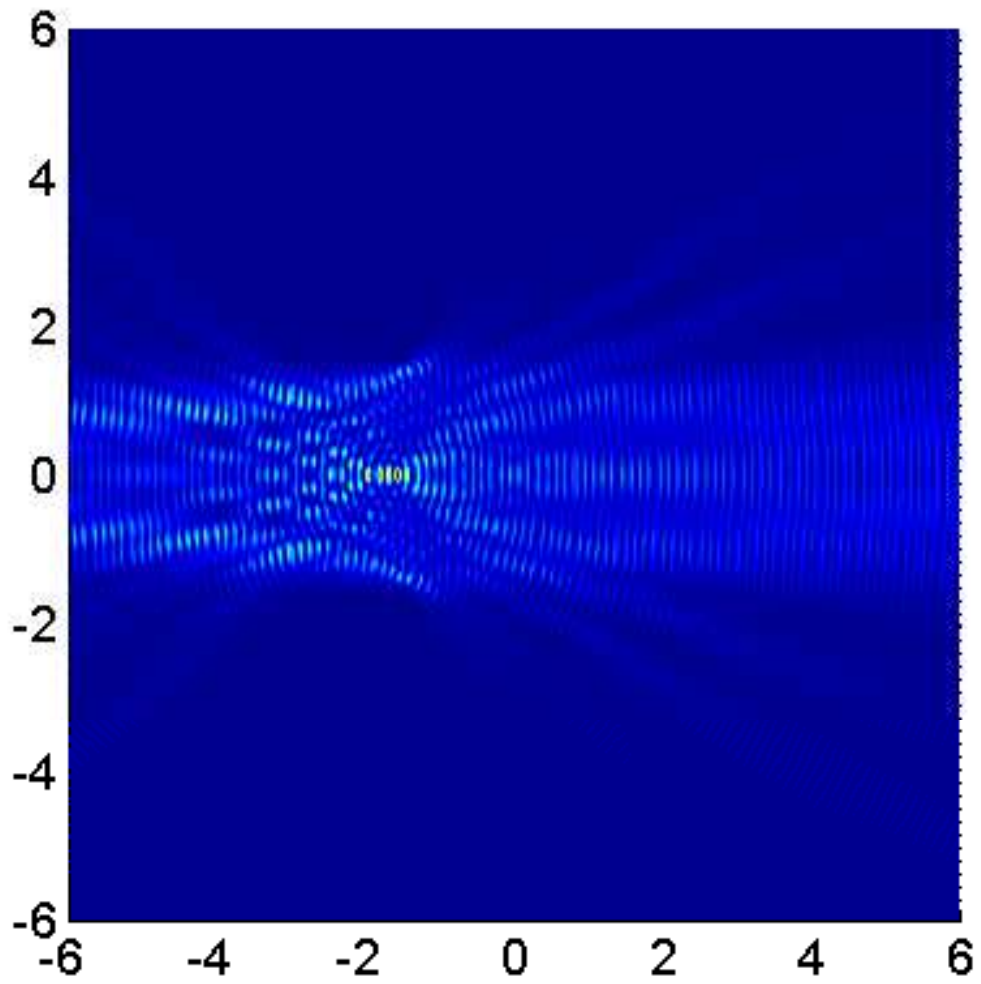}}
\subfigure[\textbf{$\mathbf{d}=(0,1)^{T}$}]
   {\includegraphics[width=0.24\textwidth]{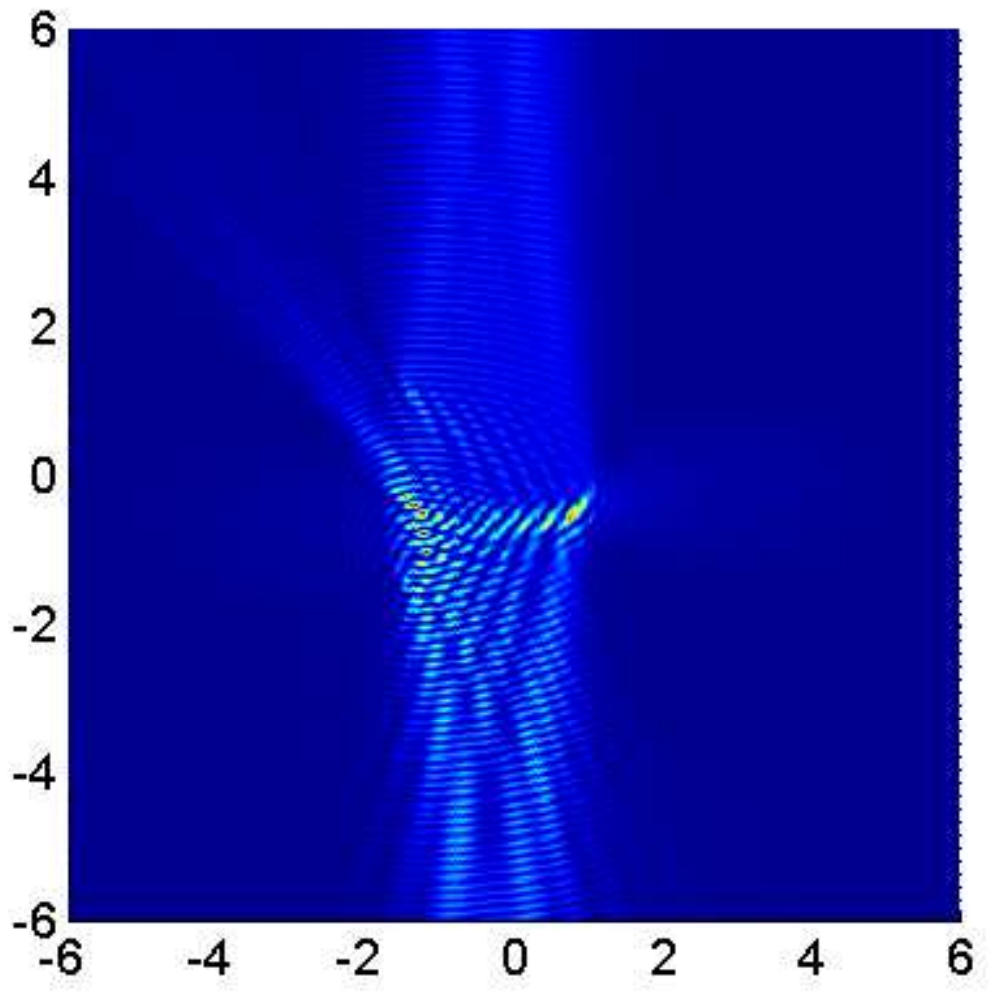}}
\subfigure[\textbf{$\mathbf{d}=(-1,0)^{T}$}]
   {\includegraphics[width=0.24\textwidth]{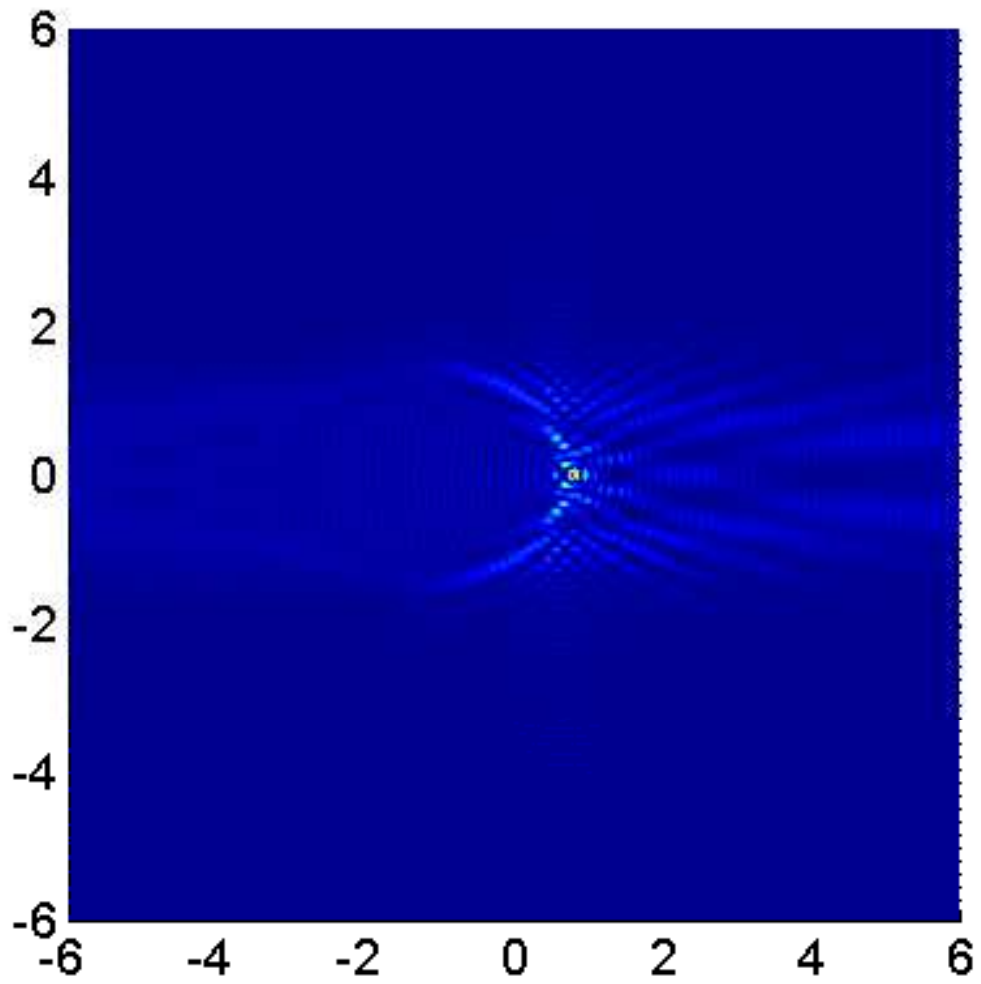}}
\subfigure[\textbf{$\mathbf{d}=(0,-1)^{T}$}]
   {\includegraphics[width=0.24\textwidth]{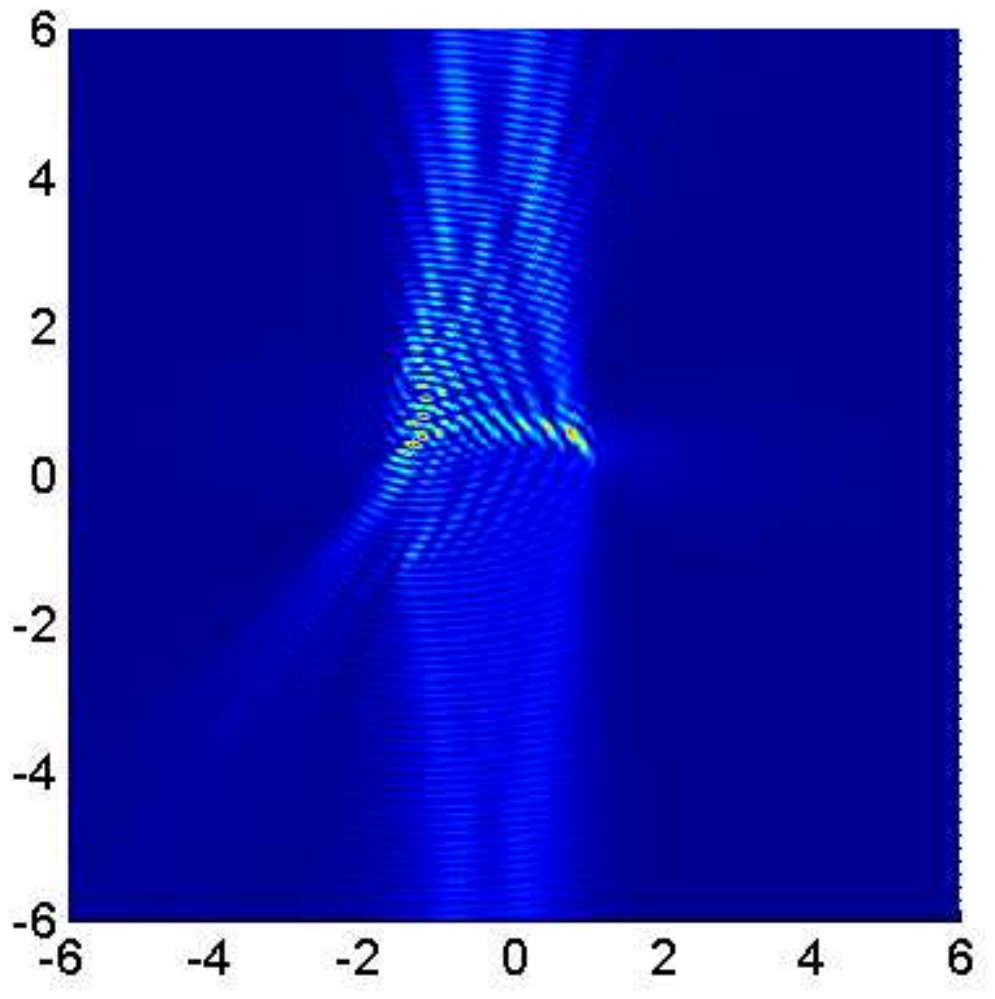}}
\subfigure[\textbf{2 directions}]
   {\includegraphics[width=0.24\textwidth]{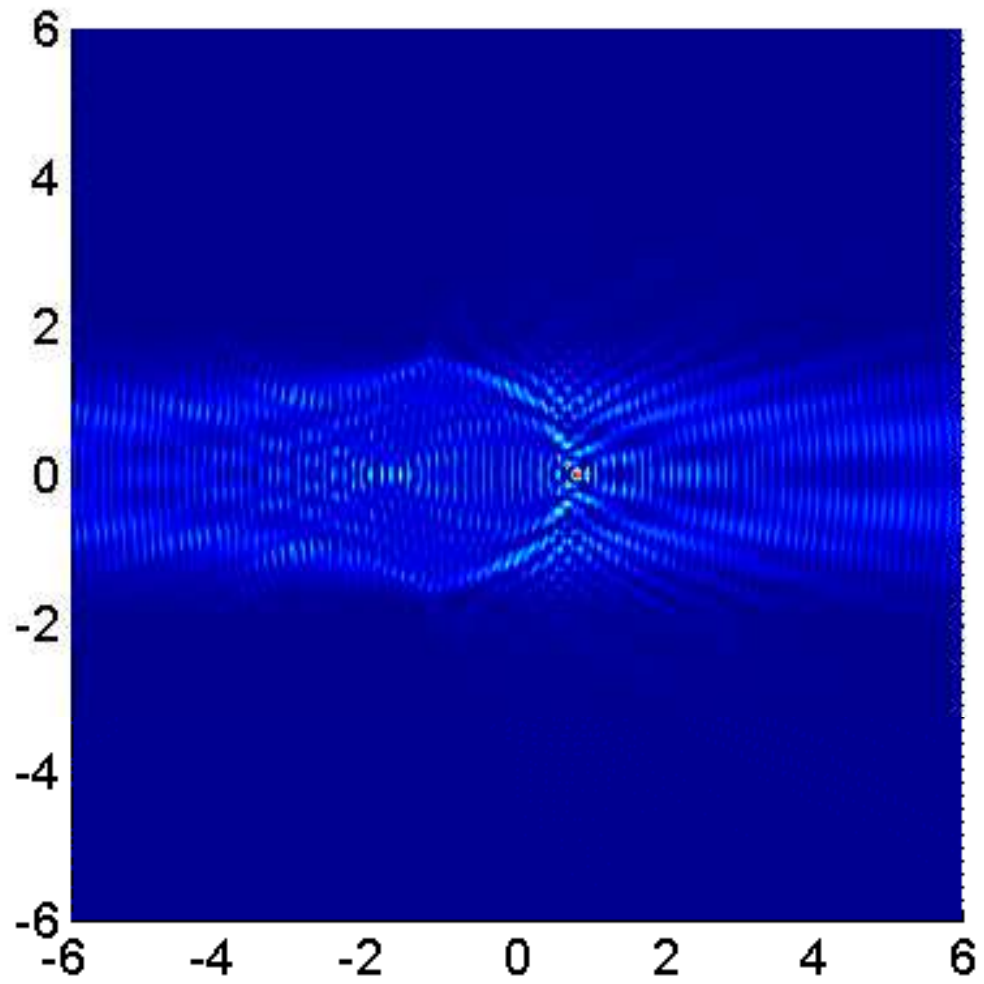}}
  \subfigure[\textbf{4 directions}]
   {\includegraphics[width=0.24\textwidth]{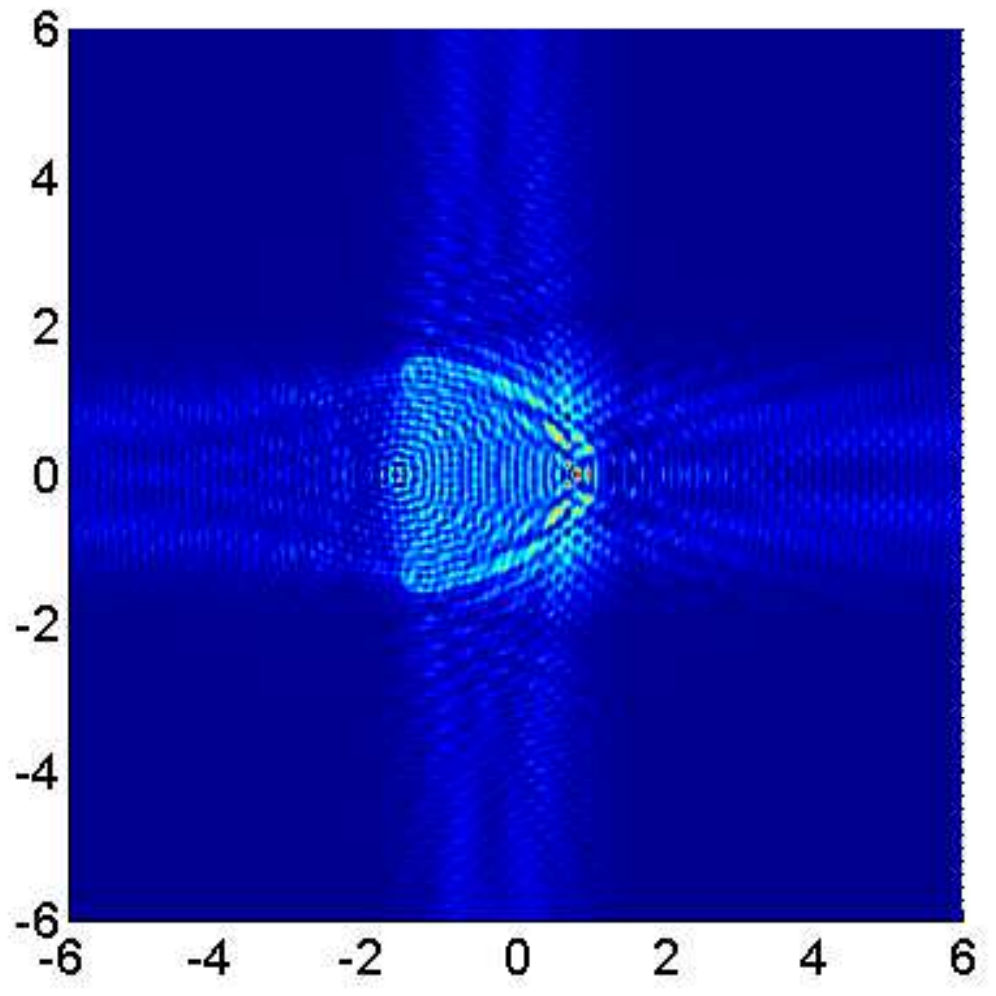}}
\subfigure[\textbf{8 directions}]
   {\includegraphics[width=0.24\textwidth]{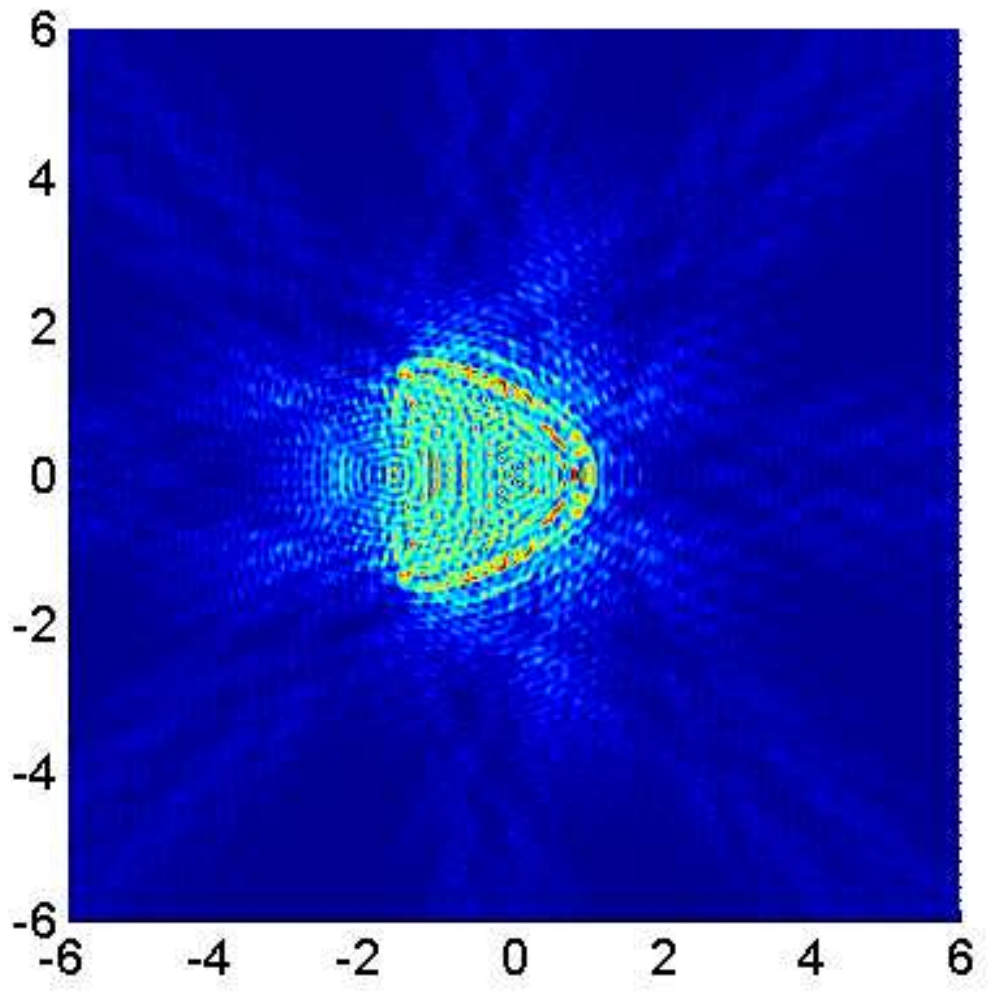}}
\subfigure[\textbf{16 directions}]
   {\includegraphics[width=0.24\textwidth]{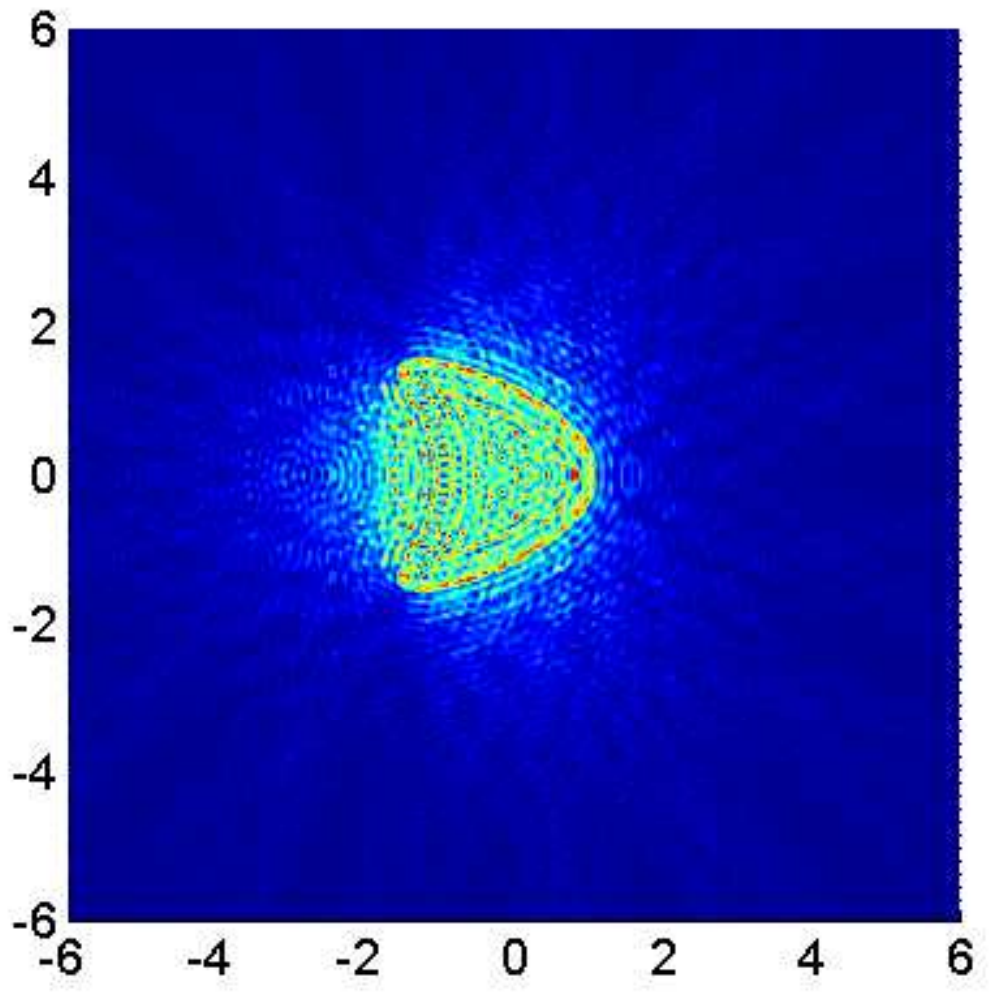}}
\caption{{\bf Example Limited-aperture.} Reconstructions of kite-shaped domain with one or several incident plane shear waves.}
\label{fig:limit_aperture_reconstrucion_I2_less_incidentDirection}
\end{figure}

In all the above examples, we observe that the reconstructions are rather satisfactory,
considering the severe ill-posedness of the inverse scattering problems and the
fact that at least $10\%$ noise is added in the measurements (far field patterns).

\section{Concluding remarks}
\label{sec4}
\setcounter{equation}{0}

In this paper we propose three non-iterative sampling methods for shape identification in inverse elastic scattering problems.
Both the theory foundation and numerical simulations are presented.
Only matrix vector multiplications are involved in the computation, thus
our methods are very fast and robust against measurement noise from the numerical point of view.
The recovering scheme works independently of the physical properties of the underlying scatterers.
There might be several components with different scalar sizes, presented simultaneously.
Our method also allows us to distinguish two components close to each other, which is known to be challenging for numerical reconstruction.
For the limited-aperture problems, some data retrieval techniques are proposed, which have been used for the inverse problems by combining the proposed sampling methods.
An important and interesting  observation is that the reconstructions by using the indicator $I_{\rm SS}$ seem to be the best in all cases, which may
due to the fact that $k_s>k_p$.
Finally, we want to remark that, in all of our numerical simulations,
we have observed that the indicator always takes its maximum on or near the boundary of the scatterer. However, there is still no theory analysis on this fact.

\section*{Acknowledgement}
The research of X. Ji is partially supported by the NNSF of China with Grant Nos. 11271018 and 91630313,
and National Centre for Mathematics and Interdisciplinary Sciences, CAS.
The research of X. Liu is supported by the NNSF of China under grant 11571355 and the Youth Innovation Promotion Association, CAS.

\bibliographystyle{SIAM}

\end{document}